\theoremstyle{plain}
\newtheorem{theorem}[subsection]{Theorem}
\newtheorem{lemma}[subsection]{Lemma}
\newtheorem{proposition}[subsection]{Proposition}
\theoremstyle{definition}
\newtheorem{definition}[subsection]{Definition}
\theoremstyle{remark}
\newtheorem{example}[subsection]{Example}
\newtheorem{remark}[subsection]{Remark}
\newcommand{\defn}[1]{\emph{#1}}
\def\mathrmdef#1{\expandafter\def\csname#1\endcsname{{\rm#1}}}
\newcommand{\C}{\ensuremath{\mathbb{C}}}
\newcommand{\Ccal}{\ensuremath{\mathsf{C}}}
\newcommand{\V}{\ensuremath{\mathcal{V}}}
\newcommand{\Grp}{\ensuremath{\mathsf{Grp}}}
\newcommand{\Ord}{\ensuremath{\mathsf{Ord}}}
\newcommand{\Pos}{\ensuremath{\mathsf{Pos}}}
\newcommand{\Mon}{\ensuremath{\mathsf{Mon}}}
\newcommand{\MMon}{\ensuremath{\mathbb{M}\mathsf{on}}}
\newcommand{\GMon}{\ensuremath{\mathsf{GMon}}}
\newcommand{\GGMon}{\ensuremath{\mathbb{G}\mathsf{Mon}}}
\newcommand{\Set}{\ensuremath{\mathsf{Set}}}
\newcommand{\N}{\ensuremath{\mathbb{N}}}
\newcommand{\OrdGrp}{\ensuremath{\mathsf{OrdGrp}}}
\newcommand{\OOrdGrp}{\ensuremath{\mathbb{O}}\ensuremath{\mathsf{rdGrp}}}
\newcommand{\VCat}{\ensuremath{V\mbox{-}\mathsf{Cat}}}
\renewcommand{\O}{\ensuremath{\mathsf{O}}}
\newcommand{\OO}{\ensuremath{\mathbb{O}}}
\def\mathrmdef#1{\expandafter\def\csname#1\endcsname{{\rm#1}}}
\newcommand{\la}{\langle}
\newcommand{\ra}{\rangle}
\newcommand{\mono}{\rightarrowtail}
\newcommand{\repi}{\ensuremath{\twoheadrightarrow}}
\newcommand{\moi}{\preccurlyeq}
\newcommand{\co}{\mathrm{co}}
\newcommand{\nt}{\circ}
\def\vpullback{
\ar@{-}[]+D+<6pt,-6pt>;[]+D+<0pt,-12pt>;%
\ar@{-}[]+D+<0pt,-12pt>;[]+D+<-6pt,-6pt>}
\def\pullback{
 \ar@{-}[]+R+<4pt,-3pt>;[]+RD+<4pt,-6pt>%
 \ar@{-}[]+D+<3pt,-6pt>;[]+RD+<4pt,-6pt>}
\def\mathrmdef#1{\expandafter\def\csname#1\endcsname{{\rm#1}}}
\newcommand{\Rel}{\ensuremath{\mathsf{Rel}}}
\newcommand{\Relf}{\ensuremath{\mathsf{Rel_{ff}}}}
\newcommand{\Relw}{\ensuremath{\mathsf{Rel_{idl}}}}
\def\relto{{\rightarrow\hspace*{-2.4ex}{\mapstochar}\hspace*{2.2ex}}}
\begin{document}

\title{Enriched aspects of calculus of relations and $2$-permutability}

\author{Maria Manuel Clementino}
\address{CMUC, Department of Mathematics, University of
Coimbra, 3000-143 Coimbra, Portugal}\thanks{}
\email{mmc@mat.uc.pt}

\author{Diana Rodelo}
\address{Department de Mathematics, University of the Algarve, 8005-139 Faro, Portugal and CMUC, Department of Mathematics, University of Coimbra, 3000-143 Coimbra, Portugal}
\thanks{The authors acknowledge partial financial support by {\it Centro de Matemática da Universidade de Coimbra} (CMUC), funded by the Portuguese Government through FCT/MCTES, DOI 10.54499/UIDB/00324/2020.}
\email{drodelo@ualg.pt}

\keywords{regular categories, categories enriched in the category of preorders, Mal'tsev categories, Mal'tsev objects, $V$-categories}

\subjclass{18E08, 
18E13, 
18D20, 
08C05, 
18N10, 
}

\begin{abstract}
The aim of this work is to further develop the calculus of (internal) relations for a regular $\Ord$-category $\C$. To capture the enriched features of a regular $\Ord$-category and obtain a good calculus, the relations we work with are precisely the \emph{ideals} in $\C$. We then focus on an enriched version of the 1-dimensional algebraic $2$-permutable (also called Mal'tsev) property and its well-known equivalent characterisations expressed through properties on ordinary relations.  We introduce the notion of \emph{$\Ord$-Mal'tsev category} and show that these may be characterised through enriched versions of the above mentioned properties adapted to ideals. Any $\Ord$-enrichment of a 1-dimensional Mal'tsev category is necessarily an $\Ord$-Mal'tsev category. We also give some examples of categories which are not Mal'tsev categories, but are $\Ord$-Mal'tsev categories.
\end{abstract}

\maketitle

\section*{Introduction}
The notion of \emph{regular category}~\cite{Barr} has been widely studied and explored in Category Theory over the past 50 years. Regular categories capture several nice exactness properties of abelian categories~\cite{Buchsbaum}, one of the notions in the genesis of Category Theory, but without requiring them to be additive. A handy exactness property of regular categories is the existence of (regular) images. This makes regular categories a good context to work with ordinary relations, since it is possible to define their composition and this composition is associative. The calculus of ordinary relations provides a well established and powerful tool for obtaining proofs in regular categories. Another good reason for the successful development of regular categories is the large number of examples. The category of sets, any elementary topos, abelian categories or any variety of universal algebras are all examples of Barr-exact categories~\cite{Barr}, which are regular categories. The category of topological groups gives an example of a regular category which is not Barr-exact (see~\cite{C21}).

A variety of universal algebras, of a certain type, is defined through its signature and axioms, i.e. its theory admits specific operations satisfying given identities. For example, a variety of universal algebras is called a \emph{$2$-permutable variety}~\cite{Smith} (they are also called \emph{congruence permutable varieties} or \emph{Mal'tsev varieties}) when its theory admits a ternary Mal'tsev operation $p$ satisfying the identities $p(x,y,y)=x$ and $p(x,x,y)=y$. The variety $\Grp$ of groups is $2$-permutable, where $p(x,y,z)=xy^{-1}z$. Sometimes it is possible to extract from the operations and identities equivalent properties involving homomorphic relations. The translation of these properties on homomorphic relations to an appropriate categorical setting could be used to define the categorical counterpart of such type of variety. For example, the existence of a Mal'tsev operation of a $2$-permutable variety $\V$ is equivalent to the fact that the composition of any pair of congruences $R,S$ on any algebra $X$ of $\V$ is $2$-permutable (=commutative): $RS\cong SR$~\cite{Maltsev-Sbornik}. It was shown in~\cite{Lambek} that $2$-permutable varieties can also be characterised by the fact that any homomorphic relation $D$ from an algebra $X$ to an algebra $Y$ is \emph{difunctional}:
\[
	\left[\,(x,y)\in D \wedge (u, y)\in D \wedge (u,v)\in D\,\right] \Rightarrow (x,v)\in D,
\]
where $x,u\in X$ and $y,v\in Y$. The notion of $2$-permutable variety was generalised to a categorical context in~\cite{CLP} (see also~\cite{CKP, CPP, BB}). This was achieved by translating the characteristic properties on homomorphic relations of $2$-permutable varieties into similar properties on ordinary relations for categories. A regular category $\Ccal$ is called a \emph{Mal'tsev category} when any pair of ordinary equivalence relations $R, S$  on any object $X$ in $\Ccal$ is such that $RS\cong SR$. Without requiring any kind of exactness properties, a category $\Ccal$ is a Mal'tsev category when any ordinary relation $D\colon X \relto Y$ in $\Ccal$ is difunctional, i.e. the relation $\Ccal(W,D)\colon  \Ccal(W,X)\relto \Ccal(W,Y)$ in $\Set$ is difunctional, for every object $W$ of $\Ccal$ (see Definition~\ref{def of difunctional}). There are several alternative well-known characterisations of regular Mal'tsev categories given through other properties on ordinary relations, such as: every ordinary reflexive relation is an ordinary equivalence relation. They are recalled in Theorem~\ref{from CKP}.

The aim of this work is to explore $2$-permutability in an $\Ord$-enriched context and define, what we call, $\Ord$-Mal'tsev category -- Section~\ref{Mal'tsev property in the Ord-enriched context}. To do so we consider an enriched version of the property concerning the difunctionality of ordinary relations. The appropriate enriched version of an ordinary relation turns out to be that of \emph{ideal} (see Definition~\ref{w-c relation}). An $\Ord$-category $\C$ is called an \emph{$\Ord$-Mal'tsev category} when every ideal $D\colon X\looparrowright Y$ in $\C$ satisfies the property: given morphisms $x, u, u'\colon A\to X$, $y,y',v\colon A\to Y$, the following implication holds
\[
	\left[\, (x,y)\in_A D \wedge y\moi y' \wedge (u,y')\in_A D \wedge u\moi u' \wedge (u',v)\in_A D\,\right] \Rightarrow (x,v)\in_A D.
\]
Any $\Ord$-enrichment of a Mal'tsev category is necessarily an $\Ord$-Mal'tsev category. If an $\Ord$-category $\C$ is regular (see Definition~\ref{enriched regular cat} below) then we obtain equivalent characterisations of regular $\Ord$-Mal'tsev categories through properties on ideals (Theorem~\ref{Ord-enriched for CKP}, which is the enriched version of Theorem~\ref{from CKP}).

A fundamental part of this work concerns the characterisations of regular $\Ord$-Mal'tsev categories obtained in Theorem~\ref{Ord-enriched for CKP}. This is achieved by developing an enriched calculus of relations for ideals in the context of regular $\Ord$-categories -- Section~\ref{Order ideals and their calculus of relations}. We adapt the calculus of relations given in~\cite{Vassilis}, which was done for regular $\Pos$-categories, to regular $\Ord$-categories and further explore the possible extensions of the known calculus of ordinary relations in the regular context (see~\cite{CKP}).

We give examples of categories which are not Mal'tsev categories, and provide them with an $\Ord$-enrichment for which they are $\Ord$-Mal'tsev categories -- Section~\ref{Examples}. The example concerning the category $(\VCat)^\op$ relies on an object-wise approach to $\Ord$-Mal'tsev categories, which is developed in the Appendix.

\section{\texorpdfstring{$\Ord$}{Ord}-enriched categories}\label{Ord-enriched categories}

Let $\C$ be an $\Ord$-category, i.e. a category enriched in the category $\Ord$ of preordered sets (i.e. sets equipped with a reflexive and transitive relation) and monotone maps. This means that, for any objects $X$ and $Y$ of $\C$, $\C(X,Y)$ is equipped with a preorder such that (pre)composition preserves it. We will denote this preorder of morphisms by $\preccurlyeq$. If we consider in $\C$ the reverse preorder we obtain again an $\Ord$-enriched category which we denote, as usual, by $\C^\co$. Any category $\Ccal$ with the identity order (=discrete order) on morphisms can be considered an $\Ord$-category.

A morphism $m\colon X\to Y$ is said to be \defn{full} when: given morphisms $a,a'\colon A\to X$ such that $ma \moi ma'$, then $a\moi a'$; equivalently, $ma\moi ma'$ if and only if $a\moi a'$. (Note that, in the $\Ord$-enriched context, all morphisms are faithful.) Such (mono)morphisms are also called \defn{ff-(mono)morphisms}, where the ``ff'' stands for ``fully faithful''; see~\cite{KV, Vassilis}. If the preorder $\preccurlyeq$ is also antisymmetric, so that $\C$ is a $\Pos$-category, then an ff-morphism is necessarily a monomorphism; this is not the case when $\C$ is an $\Ord$-category. We denote ff-monomorphisms with arrows of the type $\mono$. We have similar properties for ff-(mono)morphisms as those of monomorphisms in ordinary categories.

\begin{lemma}\label{pps for ff-ms}
Let $m\colon X\to Y$ and $n\colon Y\to Z$ be morphisms in an $\Ord$-category $\C$. Then:
\begin{itemize}
	\item[(1)] if $m$ and $n$ are ff-(mono)morphisms, then $nm$ is also an ff-(mono)morphism;
	\item[(2)] if $nm$ is an ff-(mono)morphism, then $m$ is an ff-(mono)morphism;
	\item[(3)] the 2-pullback of an ff-(mono)morphism is an ff-(mono)morphism.
\end{itemize}
\end{lemma}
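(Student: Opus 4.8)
The plan is to reduce everything to the defining order-reflecting property of an ff-morphism and, for the monomorphism variants, to combine it with the corresponding classical facts about monomorphisms in the underlying ordinary category of $\C$. Throughout I use the reformulation that $m$ is ff precisely when $ma\moi ma'\iff a\moi a'$ for all $a,a'\colon A\to X$; since the implication $a\moi a'\To ma\moi ma'$ is automatic from the $\Ord$-enrichment (postcomposition by $m$ is monotone), the only content is the reflecting implication $ma\moi ma'\To a\moi a'$.

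For (1) I would simply chain the biconditionals. Given $a,a'\colon A\to X$, applying fullness of $n$ to the pair $ma,ma'\colon A\to Y$ gives $nma\moi nma'\iff ma\moi ma'$, while fullness of $m$ gives $ma\moi ma'\iff a\moi a'$; composing yields $nma\moi nma'\iff a\moi a'$, so $nm$ is ff. For (2), given $a,a'\colon A\to X$ with $ma\moi ma'$, postcomposing with $n$ (monotone) gives $(nm)a\moi(nm)a'$, and fullness of $nm$ then forces $a\moi a'$; thus $m$ is ff. The monomorphism clauses of (1) and (2) are the classical arguments applied in the underlying category of $\C$: a composite of monomorphisms is a monomorphism, and if $nm$ is a monomorphism then so is $m$.

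Part (3) is where the real work lies, and the main obstacle is to extract and correctly use the two-dimensional universal property of the 2-pullback. Writing the 2-pullback of $m\colon X\to Z$ along $n\colon Y\to Z$ as a square with projections $p\colon P\to X$, $q\colon P\to Y$ and comparison $mp\cong nq$, I must show that $q$ is ff whenever $m$ is. The key property I would invoke is that, for $a,a'\colon A\to P$, one has $a\moi a'$ if and only if $pa\moi pa'$ and $qa\moi qa'$ — that is, the preorder on $\C(A,P)$ is exactly the conjunction of the preorders reflected along the two projections. Granting this, I proceed by transporting an inequality across the square: from $qa\moi qa'$, postcomposition with $n$ gives $nqa\moi nqa'$, and the comparison cells yield $mpa\cong nqa\moi nqa'\cong mpa'$, hence $mpa\moi mpa'$; since $m$ is ff this delivers $pa\moi pa'$. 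With both $pa\moi pa'$ and $qa\moi qa'$ in hand, the two-dimensional universal property gives $a\moi a'$, proving $q$ ff. For the monomorphism clause I would use that the 2-pullback in particular provides the one-dimensional (jointly monic) data $mp=nq$ of an ordinary pullback and run the same transport with equalities, concluding $pa=pa'$ from $m$ mono and then $a=a'$ from joint monicity of $(p,q)$. The delicate point, and the one I would be most careful about, is exactly this last interaction with the preorder: the order-reflecting statement is formal once the two-dimensional universal property is available, but the monomorphism clause requires the comparison to be strong enough (equality of $mp$ and $nq$, or a genuine isomorphism together with joint monicity) so that the transport argument still closes up in the non-antisymmetric $\Ord$-setting.
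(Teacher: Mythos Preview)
Your argument is correct and is exactly the expected direct verification; the paper itself states this lemma without proof, treating it as routine. The only remark is that your hedging about the comparison cell in (3) is unnecessary here: in this paper's setting the 2-pullback is taken strictly (cf.\ the strict comma object in Definition~\ref{comma obj def}), so $mp=nq$ on the nose and the projections are jointly ff-monomorphic, which is precisely the ``$a\moi a'\iff pa\moi pa'$ and $qa\moi qa'$'' property you use.
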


\begin{definition}\label{comma obj def}
Given an ordered pair of morphisms $(f\colon X\to Y, g\colon Z\to Y)$ in an $\Ord$-category $\C$ with common codomain, the \defn{(strict) comma object} of $(f,g)$ is defined by an object $f/g$ and morphisms $\pi_1\colon f/g\to X$, $\pi_2\colon f/g\to Z$ (also called ``projections'') such that
\begin{itemize}
	\item[(C1)] $f\pi_1\moi g\pi_2$;
	\item[(C2)] it has the universal property: given morphisms $\alpha\colon A\to X$ and $\beta\colon A\to Z$ such that $f\alpha\moi g\beta$, there exists a unique morphism $\la \alpha,\beta\ra\colon A\to f/g$ such that $\pi_1\la \alpha,\beta\ra =\alpha$ and $\pi_2\la \alpha,\beta\ra = \beta$ (see diagram \eqref{comma f/g} below);
	\item[(C3)] for morphisms $\alpha,\alpha'\colon A\to X$, $\beta,\beta'\colon A\to Z$ such that $f\alpha\moi g\beta$, $f\alpha'\moi g\beta'$, $\alpha\moi \alpha'$ and $\beta\moi \beta'$, the corresponding unique morphisms $\la \alpha,\beta\ra,\la \alpha',\beta'\ra\colon A\to f/g$ verify $\la \alpha,\beta\ra\moi \la \alpha',\beta'\ra$;
\end{itemize}
\end{definition}

\begin{equation}
\label{comma f/g}
\vcenter{\xymatrix@=30pt{ A \ar@/_15pt/[ddr]_-{\alpha} \ar@/^15pt/[drr]^-{\beta} \ar@{.>}[dr]|-{\la \alpha,\beta\ra} \\
								 & f/g \ar[r]^-{\pi_2} \ar[d]_-{\pi_1} \ar@{}[dr]|-{\moi} & Z \ar[d]^-g \\
								 & X \ar[r]_-f & Y.}}
\end{equation}
From conditions (C2) and (C3) we can deduce that $(\pi_1,\pi_2)$ is \emph{jointly ff-monomorphic}. If $\C$ admits 2-products, this translates into the fact that $\la \pi_1,\pi_2\ra\colon f/g\mono X\times Z$ is an ff-monomorphism.

The following result combines comma objects and 2-pullbacks; a proof can be found, for instance, in \cite{Vassilis}:
\begin{lemma}
\label{2 comma. 12 comma <=> 1 pb}
Let $\C$ be an $\Ord$-category. Consider the diagram
\[
	\xymatrix{P \ar[d]_-{p_1} \ar[r]^-{p_2} & f/g \ar[d]_-{\pi_1} \ar[r]^-{\pi_2} \ar@{}[dr]|-{\moi} & Z \ar[d]^-g \\
						X' \ar[r]_-x & X \ar[r]_-f &  Y}
\]
where the right square is a comma object and the left square is commutative. The outer rectangle is a comma object if and only if the left square is a $2$-pullback.
\end{lemma}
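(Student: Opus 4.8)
The plan is to treat this as a two-dimensional analogue of the classical pullback pasting lemma, proving each implication by directly verifying the relevant universal property through a diagram chase. Throughout, the decisive structural fact I will lean on is that the comma projections $(\pi_1,\pi_2)$ of $f/g$ are jointly ff-monomorphic (as observed after Definition~\ref{comma obj def}): whenever two morphisms into $f/g$ agree after composing with both $\pi_1$ and $\pi_2$ they are equal, and the corresponding order statement holds as well. I will write $p_1$ and $\pi_2 p_2$ for the candidate projections of the outer rectangle, so that ``the outer rectangle is a comma object'' means that $P$ together with these two morphisms is the comma object of $(fx,g)$.

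For the implication ``the left square is a $2$-pullback $\To$ the outer rectangle is a comma object'', I would verify (C1)--(C3) of Definition~\ref{comma obj def} for $P$. Condition (C1), namely $fxp_1\moi g\pi_2 p_2$, follows by precomposing the comma inequality $f\pi_1\moi g\pi_2$ with $p_2$ and using the commutativity $\pi_1 p_2=xp_1$ of the left square. For (C2), given $\alpha\colon A\to X'$ and $\beta\colon A\to Z$ with $fx\alpha\moi g\beta$, I first factor through the right comma object to obtain $\la x\alpha,\beta\ra\colon A\to f/g$, and then observe that $x\alpha=\pi_1\la x\alpha,\beta\ra$ lets me invoke the $1$-dimensional universal property of the $2$-pullback to produce a unique $h\colon A\to P$ with $p_1 h=\alpha$ and $p_2 h=\la x\alpha,\beta\ra$; this $h$ is the desired factorization, and its uniqueness reduces to joint ff-monomorphicity of $(\pi_1,\pi_2)$ together with the joint monicity of $(p_1,p_2)$ coming from the $2$-pullback. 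Condition (C3) is handled the same way: the order hypotheses feed first into (C3) of $f/g$ and then into the $2$-dimensional part of the $2$-pullback property.

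For the converse ``the outer rectangle is a comma object $\To$ the left square is a $2$-pullback'', strict commutativity is given, so it remains to check the $1$- and $2$-dimensional universal properties. Given $a\colon A\to X'$ and $b\colon A\to f/g$ with $xa=\pi_1 b$, I note that $fxa=f\pi_1 b\moi g\pi_2 b$, so the comma property (C2) of the outer rectangle yields a unique $k\colon A\to P$ with $p_1 k=a$ and $\pi_2 p_2 k=\pi_2 b$; comparing $\pi_1 p_2 k=xp_1 k=xa=\pi_1 b$ with $\pi_2 p_2 k=\pi_2 b$ and applying joint ff-monomorphicity of $(\pi_1,\pi_2)$ gives $p_2 k=b$, so that $k$ factors the pullback cone, uniquely by (C2). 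The $2$-dimensional property is then extracted in the same manner from (C3) of the outer rectangle.

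The hard part will not be any single step but rather the passage between the strict equalities demanded by the $2$-pullback and the inequalities $\moi$ carried by the comma objects: the argument only closes because joint ff-monomorphicity lets me upgrade data specifying only the two composites $\pi_1 p_2 k$ and $\pi_2 p_2 k$ to the strict identity $p_2 k=b$ (and, symmetrically, $p_2 h=\la x\alpha,\beta\ra$ in the other direction). I would also take care to confirm that the comma inequalities required before each application of (C3) are actually available, which they are thanks to (C1) of $f/g$ propagated along the relevant composites.
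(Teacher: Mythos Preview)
Your argument is correct and is precisely the standard two-dimensional pasting-lemma chase: factor through the right comma object and then through the $2$-pullback in one direction, and in the other direction use the outer comma universal property together with joint ff-monomorphicity of $(\pi_1,\pi_2)$ to recover $p_2k=b$. The paper does not actually give its own proof of this lemma; it only states the result and refers to~\cite{Vassilis}, so there is no alternative approach to compare against.
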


Recall that a category $\Ccal$ is a 1-dimensional \defn{regular category}~\cite{Barr} when:
\begin{itemize}
	\item $\Ccal$ has finite limits;
	\item $\Ccal$ has coequalisers of kernel pairs;
	\item regular epimorphisms are stable under pullbacks in $\Ccal$.
\end{itemize}
\noindent Regular categories may also be characterised as categories $\Ccal$ such that (see~\cite{Borceux}, for example):
\begin{itemize}
	\item[(r1)] $\Ccal$ has finite limits;
	\item[(r2)] $\Ccal$ admits a (strong epimorphism, monomorphism) factorisation system;
	\item[(r3)] strong epimorphisms are stable under pullbacks in $\Ccal$;
	\item[(r4)] every strong epimorphism is a coequaliser (of its kernel pair).
\end{itemize}
\noindent When $\Ccal$ is a regular category the notions of regular epimorphism and strong epimorphism coincide, so that condition (r4) follows from (r1)-(r3).

This latter formulation led to the following definition of regular $\Pos$-category~\cite{KV}. A $\Pos$-category $\C$ is called \defn{regular} when:
\begin{itemize}
	\item[(R1)] $\C$ has finite (weighted) limits;
	\item[(R2)] $\C$ admits an (so-morphism, ff-monomorphism) factorisation system;
	\item[(R3)] so-morphisms are stable under 2-pullbacks in $\C$;
	\item[(R4)] every so-morphism is a coinserter (of its comma object).
\end{itemize}
\noindent As in the 1-dimensional case, condition (R4) follows from (R1)-(R3), as shown in~\cite{Vassilis}.

There are several approaches to regularity for enriched categories (see for instance~\cite{BourkeGarner}), but here we chose to use the following one, which allows for an easy calculus of (order) ideals.

\begin{definition}\label{enriched regular cat}
An $\Ord$-category $\C$ is called \defn{regular} when:
\begin{itemize}
	\item[(R1)] $\C$ has finite (weighted) limits;
	\item[(R2)] $\C$ admits an (so-morphism, ff-monomorphism) factorisation system;
	\item[(R3)] so-morphisms are stable under 2-pullbacks in $\C$;
	\item[(R4)] every so-morphism is a bicoinserter (of its comma object).
\end{itemize}
\end{definition}

Let us recall the ingredients needed for this concept. In the following $\C$ denotes an $\Ord$-category. The $\Ord$-enriched version of a monomorphism is that of an ff-monomorphism. The $\Ord$-enriched version of a strong epimorphism is defined next. A morphism $e\colon A\to B$ is called \defn{surjective on objects}, or \defn{so-morphism}, when $e$ is left orthogonal to every ff-monomorphism $m$, i.e. the usual diagonal fill-in property holds
\[
	\xymatrix{A \ar[r]^-e \ar[d]_-u & B \ar[d]^-v \ar@{.>}[dl]|-d \\ X \ar@{ >->}[r]_-m & Y.}
\]

If $\C$ has binary 2-products, then it is easy to check that every so-morphism is necessarily an epimorphism. We denote so-morphisms with arrows of the type $\repi$.

\begin{lemma}\label{pps for so-ms}
Let $e\colon A\to B$ and $f\colon B\to C$ be morphisms in an $\Ord$-category $\C$. Then:
\begin{itemize}
	\item[(1)] if $e$ and $f$ are so-morphisms, then $fe$ is also an so-morphism;
	\item[(2)] if $fe$ is an so-morphism, then $f$ is an so-morphism.
\end{itemize}
\end{lemma}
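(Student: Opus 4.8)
The plan is to treat both statements as instances of the standard behaviour of a left orthogonal class, running the usual diagonal chases while keeping in mind that the morphisms we test against are ff-monomorphisms, hence genuine monomorphisms in $\C$. Throughout I will use the definition directly: a morphism $e$ is an so-morphism precisely when, for every commutative square
\[
\xymatrix{A' \ar[r]^-e \ar[d]_-u & B' \ar[d]^-v \\ X \ar@{ >->}[r]_-m & Y}
\]
with $ve=mu$ and $m$ an ff-monomorphism, there is a unique $d\colon B'\to X$ satisfying $de=u$ and $md=v$.

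For part (1), I would fix an arbitrary ff-monomorphism $m\colon X\mono Y$ and a commutative square $v(fe)=mu$ along $fe$, and construct the diagonal in two stages. First I apply the orthogonality of $e$ to the square with legs $u$ and $vf$ (which commutes, since $(vf)e=v(fe)=mu$), obtaining $d_1\colon B\to X$ with $d_1e=u$ and $md_1=vf$. The identity $md_1=vf$ is exactly the commutativity needed to apply the orthogonality of $f$ to the square with legs $d_1$ and $v$, yielding $d\colon C\to X$ with $df=d_1$ and $md=v$. Then $d(fe)=(df)e=d_1e=u$ and $md=v$, so $d$ is the required fill-in, and uniqueness is immediate because $m$ is a monomorphism.

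For part (2), assuming $fe$ is an so-morphism, I would fix an ff-monomorphism $m\colon X\mono Y$ and a commutative square $vf=mu$ along $f$, and precompose with $e$ to obtain the commutative square $v(fe)=(mu)e=m(ue)$ along $fe$. Orthogonality of $fe$ then supplies a unique $d\colon C\to X$ with $d(fe)=ue$ and $md=v$. It remains to verify that $d$ is a diagonal for the \emph{original} square, i.e.\ that $df=u$: from $m(df)=(md)f=vf=mu$ and $m$ monic one concludes $df=u$. Together with $md=v$ this gives the fill-in, and uniqueness again follows from $m$ being a monomorphism.

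The only genuine obstacle is conceptual rather than computational. In an $\Ord$-category a merely full morphism need not be a monomorphism, so the cancellation $m(df)=mu\Rightarrow df=u$ in part (2), as well as the uniqueness of the fill-ins, would otherwise deliver only the preorder equivalences $df\moi u$ and $u\moi df$ rather than an equality. It is precisely the requirement that the test morphisms $m$ be ff-\emph{mono}morphisms that promotes these to genuine equalities and lets the classical diagonal-chasing argument go through without modification.
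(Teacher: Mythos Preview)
Your proof is correct and is precisely the standard orthogonality argument; the paper states this lemma without proof, treating it as a routine property of left orthogonal classes, and your argument supplies exactly those details. Your closing observation that the ``mono'' part of ff-monomorphism is what makes the cancellation $m(df)=mu\Rightarrow df=u$ (and hence the uniqueness of fill-ins) go through strictly, rather than only up to $\moi$-equivalence, is both correct and the one point worth making explicit in this enriched setting.
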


Let $\C$ be an $\Ord$-category. The \defn{coinserter} of a pair of parallel morphisms $a,b\colon X\to A$ is a morphism $c\colon A\to B$ such that
\begin{itemize}
	\item $ca\moi cb$;
	\item if $d\colon A\to D$ is such that $da\moi db$, then there exists a unique morphism $\lambda\colon B \to D$  such $d=\lambda c$;
	\item given a morphism $d'\colon A\to D$ such that $d'a\moi d'b$ and $d\moi d'$, then the corresponding unique morphisms $\lambda,\lambda'\colon B \to D$ verify $\lambda\moi \lambda'$;
\[
\xymatrix@C=20pt@R=10pt{ X \ar@<2pt>[r]^-a \ar@<-2pt>[r]_-b & A \ar[rr]^-c \ar[dr]_-d & \ar@{}[d]|(.4){\circlearrowright}& B. \ar@{.>}[dl]^-{\lambda}\\ & & D}
\]
\end{itemize}

Passing from $\Pos$-enriched categories to $\Ord$-enriched categories, where antisymmetry is no longer assumed, coinserter is no longer the key notion, being replaced by the notion of bicoinserter. The \defn{bicoinserter} of $a,b$ is a morphism $e\colon A\to E$ such that
\begin{itemize}
	\item $ea\moi eb$;
	\item if $f\colon A\to F$ is such that $fa\moi fb$, then there exists a morphism $t\colon E \to F$  such $f\cong te$, i.e. $f\moi te$ and $te\moi f$;
	\item given a morphism $f'\colon A\to F$ such that $f'a\moi f'b$ and $f\moi f'$, then the corresponding existing morphisms $t,t'\colon E \to F$ verify $t\moi t'$;
\[
\xymatrix@C=20pt@R=10pt{ X \ar@<2pt>[r]^-a \ar@<-2pt>[r]_-b & A \ar[rr]^-e \ar[dr]_-f & \ar@{}[d]|(.3){\cong} & E. \ar[dl]^-{t}\\ & & F}
\]
\end{itemize}
\noindent It is easy to check that a coinserter is always a bicoinserter.

\begin{remark}\label{Remark on (R4)} 
\begin{itemize}
	\item It is easy to prove that, in a regular $\Ord$-category, every bicoinserter $e\colon A\to E$ will be the bicoinserter of its comma object projections $(\pi_1\colon e/e \to A, \pi_2\colon e/e \to A)$.
	\item In a $\Pos$-enriched context, the notions of coinserter and bicoinserter coincide. It was shown in Proposition 2.16 of~\cite{Vassilis} that condition (R4) follows from (R1), (R2) and (R3). The proof is based on the fact that a certain morphism is an so-morphism and an ff-morphism, thus an isomorphism in that context. This argument cannot be used in the $\Ord$-enriched case, since an ff-morphism may not be a monomorphism. We do not know whether (R4) follows from (R1), (R2) and (R3) in the $\Ord$-enriched context.
\end{itemize}
\end{remark}

\begin{remark}\label{remark on Ord-quasiregular} One of the aims of this work is to develop the calculus of relations in an $\Ord$-enriched context. To do so we only need an $\Ord$-category for which conditions (R1), (R2) and (R3) hold. Therefore, in the next sections the assumption that ``$\C$ is a regular $\Ord$-category'' can be replaced by  ``$\C$ is an $\Ord$-category which satisfies conditions (R1), (R2) and (R3)''. The only result that depends on condition (R4) is the one stated in Theorem~\ref{R = f_*}. 
\end{remark}

\begin{example}\label{exs of Ord-regular cats}
\begin{enumerate}
	\item[1.] $\Ord$ is a regular $\Ord$-category.
	\item[2.] Every 1-dimensional regular category equipped with the discrete order is a regular $\Ord$-category.
	\item[3.] Any variety of ordered algebras~\cite{BloomWright} is a regular $\Ord$-category.
\end{enumerate}
\end{example}

The (so-morphism, ff-monomorphism) factorisation system is stable under 2-pull\-backs in $\C$. Actually, in a regular $\Ord$-category $\C$, so-morphisms are also stable under comma objects, as we show in the following lemmas.

\begin{lemma}\label{f/1_Y, 1_Y/f and their projs} Let $\C$ be an $\Ord$-category which admits comma objects. Consider the comma objects $f_*=f/1_Y$, $f^*=1_Y/f$ and the induced morphisms $\lambda=\la 1_X,f\ra$ and $\mu=\la f,1_X\ra$
\[
\xymatrix{X \ar@{=}@/_15pt/[ddr]_-{1_X} \ar@/^15pt/[drr]^-f \ar@{.>}[dr]|-{\lambda}\\
					& f_*=f/1_Y \ar[r]^-{\pi_Y} \ar[d]_-{\pi_X} \ar@{}[dr]|-{\moi} & Y \ar@{=}[d]^-{1_Y} \\
					& X \ar[r]_-f & Y.}
\;\;\begin{array}{c}\vspace{65pt} \\\mathrm{and} \end{array}\;\;\;\;\;\;
\xymatrix{X \ar@/_15pt/[ddr]_-{f} \ar@{=}@/^15pt/[drr]^-{1_X} \ar@{.>}[dr]|-{\mu}\\
					& f^*=1_Y/f \ar[r]^-{\rho_X} \ar[d]_-{\rho_Y} \ar@{}[dr]|-{\moi} & X \ar[d]^-{f} \\
					& Y \ar@{=}[r]_-{1_Y} & Y.}
\]
The projections $\pi_X$ and $\rho_X$ are split epimorphisms (thus, they are so-morphisms). If $f$ is an so-morphism, then so are $\pi_Y$ and $\rho_Y$.
\end{lemma}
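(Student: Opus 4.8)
The plan is to read everything off directly from the universal properties of the two comma objects and then invoke the composition lemma for so-morphisms, so that no genuinely new argument is needed.

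First I would verify that $\lambda$ and $\mu$ are sections of $\pi_X$ and $\rho_X$. For $f_*=f/1_Y$, condition (C2) of Definition~\ref{comma obj def} applies to the pair $(1_X,f)$ because the required inequality $f\cdot 1_X\moi 1_Y\cdot f$ is just $f\moi f$, which holds by reflexivity of $\moi$; hence $\lambda=\la 1_X,f\ra$ is defined and satisfies $\pi_X\lambda=1_X$ and $\pi_Y\lambda=f$. Dually, for $f^*=1_Y/f$ the pair $(f,1_X)$ satisfies $1_Y\cdot f\moi f\cdot 1_X$, again by reflexivity, so $\mu=\la f,1_X\ra$ is defined with $\rho_X\mu=1_X$ and $\rho_Y\mu=f$. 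The identities $\pi_X\lambda=1_X$ and $\rho_X\mu=1_X$ say precisely that $\pi_X$ and $\rho_X$ are split epimorphisms. To justify the parenthetical ``thus, they are so-morphisms'', I would observe that a split epimorphism $e$ with section $s$ is left orthogonal to any ff-monomorphism $m\colon X\mono Y$: given a commutative square $mu=ve$, the candidate diagonal $d=us$ satisfies $md=mus=ves=v$, and $de=u$ follows from $es=1$ together with the fact that $m$ is in particular a monomorphism. Hence split epimorphisms are so-morphisms.

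For the second assertion I would reuse the factorisations obtained above. The equalities $\pi_Y\lambda=f$ and $\rho_Y\mu=f$ exhibit $f$ as a composite whose last-applied factor is $\pi_Y$, respectively $\rho_Y$. Thus, assuming $f$ is an so-morphism, Lemma~\ref{pps for so-ms}(2) applied to each factorisation immediately yields that $\pi_Y$ and $\rho_Y$ are so-morphisms.

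I do not expect a real obstacle here, since the content lies entirely in spotting the two sections $\lambda,\mu$ and the two factorisations of $f$. The only points requiring a little care are that the comma inequalities defining $\lambda$ and $\mu$ collapse to $f\moi f$ and therefore hold by reflexivity (so these morphisms genuinely exist), and that an ff-monomorphism is in particular a monomorphism, which is exactly what forces $de=u$ on the nose rather than merely $de\moi u$ in the split-epimorphism argument.
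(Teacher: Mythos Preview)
Your proof is correct and follows exactly the approach the paper has in mind: the paper's proof simply says ``straightforward, uses Lemma~\ref{pps for so-ms}'', and you have spelled out precisely those details --- the sections $\lambda,\mu$ making $\pi_X,\rho_X$ split epimorphisms, the standard orthogonality argument showing split epimorphisms are so-morphisms, and the application of Lemma~\ref{pps for so-ms}(2) to the factorisations $\pi_Y\lambda=f$ and $\rho_Y\mu=f$. Your care in noting that ff-\emph{mono}morphisms are genuine monomorphisms (so that $mde=mu$ forces $de=u$ on the nose) is exactly the subtlety needed in the $\Ord$-enriched setting.
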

\begin{proof} The proof is straightforward, and uses Lemma~\ref{pps for so-ms}.
\end{proof}

\begin{lemma}\label{f/g and preserves so-ms} Let $\C$ be a regular $\Ord$-category. Then so-morphisms are stable under comma objects. \end{lemma}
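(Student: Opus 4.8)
The plan is to read the statement as the comma-object analogue of axiom (R3): if $f\colon X\to Y$ is an so-morphism then the projection $\pi_2\colon f/g\to Z$ of the comma object $f/g$ of $(f,g)$ is an so-morphism, and symmetrically, if $g\colon Z\to Y$ is an so-morphism then $\pi_1\colon f/g\to X$ is an so-morphism. I would reduce each of these to the already-known behaviour of the basic comma objects $f/1_Y$ and $1_Y/g$ (Lemma~\ref{f/1_Y, 1_Y/f and their projs}) together with 2-pullback stability of so-morphisms.

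First I would decompose a general comma object as a 2-pullback of a basic one. Concretely, I claim that $f/g$ is, up to isomorphism, the strict 2-pullback of $\pi_Y\colon f_*=f/1_Y\to Y$ along $g$, in such a way that $\pi_2\colon f/g\to Z$ is exactly the 2-pullback of $\pi_Y$ along $g$, while $\pi_1=\pi_X q_1$ for the other 2-pullback projection $q_1\colon f/g\to f_*$. Granting this, Lemma~\ref{f/1_Y, 1_Y/f and their projs} gives that $\pi_Y$ is an so-morphism whenever $f$ is, and then (R3) makes its 2-pullback $\pi_2$ an so-morphism. For the symmetric statement I would run the identical argument with the basic comma $1_Y/g$ in place of $f/1_Y$: one checks that $f/g$ is likewise the strict 2-pullback of $\rho_Y\colon 1_Y/g\to Y$ along $f$, with $\pi_1\colon f/g\to X$ the 2-pullback of $\rho_Y$; since $\rho_Y$ is an so-morphism when $g$ is (Lemma~\ref{f/1_Y, 1_Y/f and their projs} applied to $g$), axiom (R3) makes $\pi_1$ an so-morphism.

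The one step needing care — and the main obstacle — is verifying this decomposition at the $2$-dimensional level, i.e. that the strict 2-pullback of $\pi_Y$ and $g$ is not merely a conical pullback but genuinely carries the comma universal property of $f/g$, including the order condition (C3). I would check this directly on universal properties: a map into the 2-pullback is a pair $(a\colon A\to f_*,\ b\colon A\to Z)$ with $\pi_Y a=gb$, and by (C2) for $f_*$ such an $a$ is the same as a pair $(\alpha,\beta)$ with $f\alpha\moi\beta$, $\pi_X a=\alpha$, $\pi_Y a=\beta$; the equation $\pi_Y a=gb$ forces $\beta=gb$, so the data amount to a single pair $(\alpha,b)$ with $f\alpha\moi gb$, which is precisely a map into $f/g$. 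For the order part, the 2-pullback projections jointly reflect $\moi$ and $(\pi_X,\pi_Y)$ is jointly ff, so $a\moi a'$ reduces to $\pi_1 a\moi\pi_1 a'$ together with $\pi_2 a\moi\pi_2 a'$ (the auxiliary inequality $g\pi_2 a\moi g\pi_2 a'$ being automatic from monotonicity of composition); this is exactly the joint ff-ness recorded after Definition~\ref{comma obj def}, encoding (C3). Once this identification is secured, the conclusion is immediate from Lemma~\ref{f/1_Y, 1_Y/f and their projs} and (R3).
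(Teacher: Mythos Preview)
Your proposal is correct and follows essentially the same strategy as the paper: decompose the comma object $f/g$ as a 2-pullback of one of the basic commas $f/1_Y$ or $1_Y/g$, invoke Lemma~\ref{f/1_Y, 1_Y/f and their projs} to see that the relevant projection of the basic comma is an so-morphism, and conclude by (R3). The only difference is that where you verify the decomposition by hand from the universal properties, the paper simply cites Lemma~\ref{2 comma. 12 comma <=> 1 pb} (the comma/2-pullback pasting lemma), which packages exactly the verification you carry out.
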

\begin{proof}
Consider a comma object $f/g$
\[
\xymatrix{f/g \ar[r]^-{\pi_2} \ar[d]_-{\pi_1} \ar@{}[dr]|-{\moi} & Z \ar@{>>}[d]^-{g} \\
					X \ar[r]_-f & Y,}
\]
where $g$ is an so-morphism. Consider the diagram
\[
	\xymatrix@!0@=45pt{f/g \pullback \ar[r] \ar@{>>}[d]_-{\pi_1}  \ar@(ur,ul)[rr]^-{\pi_2} & g^* \ar[r] \ar@{>>}[d]_-{\rho_Y} \ar@{}[dr]|-{\moi} & Z \ar@{>>}[d]^-g \\
						X \ar[r]_-f & Y \ar@{=}[r]_-{1_Y} & Y,}
\]
where the left side is a 2-pullback. The outer rectangle is the comma object of $(f,g)$ by Lemma~\ref{2 comma. 12 comma <=> 1 pb}. Consequently, $\rho_Y$ is an so-morphism by Lemma~\ref{f/1_Y, 1_Y/f and their projs} and $\pi_1$ is an so-morphism, since $\C$ is regular (Definition~\ref{enriched regular cat}(R3)). A similar proof holds for $f$ and $\pi_2$.
\end{proof}

\begin{remark}\label{comma objs do not preserve ff-monos} When $\C$ is an $\Ord$-category with comma objects, ff-monomorphisms are not necessarily stable under comma objects in $\C$. This is easily seen by taking $g=1_Y$, as in Lemma~\ref{f/1_Y, 1_Y/f and their projs}.
\end{remark}

\section{Relations in the 1-dimensional regular context}\label{Relations in the regular context context}

In this section we recall the basic definitions concerning (internal) relations in a 1-dimensional category, which shall be denoted by $\Ccal$ (to distinguish it from the $\C$ which is used in an $\Ord$-enriched context). We aim to extend some of those notions and results to the $\Ord$-enriched context. To distinguish a relation in this usual sense from the one in the enriched context, we call the former an ``ordinary relation''.

Let $\Ccal$ be an arbitrary category. An \defn{ordinary relation} $R$ from an object $X$ to an object $Y$ of $\Ccal$ is a span $X\stackrel{r_1}{\longleftarrow} R \stackrel{r_2}{\longrightarrow} Y$ such that $(r_1,r_2)$ is jointly monomorphic. The \emph{opposite} relation of $R$, denoted $R^\circ$, is the span $Y\stackrel{r_2}{\longleftarrow} R \stackrel{r_1}{\longrightarrow} X$. If $\Ccal$ admits binary products, then an ordinary relation as above can be viewed as a monomorphism $\la r_1,r_2\ra \colon R\to X\times Y$. When $X=Y$, we simply say that $R$ is an ordinary relation on $X$.

Any morphism $x\colon A\to X$ of $\Ccal$ can be seen a ``generalised element'' of $X$. Given $x\colon A\to X$ and $y\colon A\to Y$, we write $(x,y)\in_A R$, or simply $x R y$ (omitting the domain of the morphisms when this is not relevant), when there exists a commutative diagram
\begin{equation}\label{in R}
	\vcenter{\xymatrix@R=10pt{ & X \\
		A \ar[ur]^-{x} \ar[dr]_-{y} \ar@{.>}[rr] & & R. \ar[ul]_-{r_1} \ar[dl]^-{r_2}\\
		& Y}}
\end{equation}

\begin{definition}\label{def of difunctional}
An ordinary relation $X\stackrel{d_1}{\longleftarrow} D \stackrel{d_2}{\longrightarrow} Y$ in $\Ccal$ is called \emph{difunctional} when the relation
\begin{equation}\label{W-difunctional}
  \xymatrix@C=40pt{\Ccal(W,X) & \ar[l]_-{\Ccal(W,d_1)} \Ccal(W,D) \ar[r]^-{\Ccal(W,d_2)} & \Ccal(W,Y)}
\end{equation}
in $\Set$ is difunctional~\cite{Riguet}, for every object $W$ of $\Ccal$. More precisely, given morphisms $x,u\colon W\to X$, $y,v\colon W\to Y$, we have $(xD y \wedge uDy \wedge uDv)\Rightarrow xDv$. This can be pictured as
\begin{equation}\label{picture of difunctional}
	\begin{array}{ccc}
	 x & D & y \\
	 u & D & y \\
	 u & D & v \\
	\hline
	x & D & v.
	\end{array}
\end{equation}
\end{definition}

The definition of a reflexive, symmetric, transitive, and equivalence ordinary relation in $\Ccal$ is obtained similarly.

In order to define the composition of ordinary relations, the right setting is that of a regular category. Let $\Ccal$ be a regular category and consider ordinary relations $\la r_1,r_2\ra \colon R \to X\times Y$ and $\la s_1,s_2\ra \colon S \to Y\times Z$. The composite ordinary relation $SR\to X\times Z$ is defined through the (regular epimorphism, monomorphism) factorisation of $\la r_1p_1, s_2p_2\ra$ in
\[
	\xymatrix@R=10pt{R\times_Y S \ar[rr]^-{\la r_1p_1, s_2p_2\ra} \ar[dr]_(.4){\mathrm{regular\;\; epi\;\;\;}} & & X\times Z, \\ & SR \ar[ur]_(.6){\mathrm{\;mono}}}
\]
given the pullback
\[
	\xymatrix@!0@=40pt{ & & R\times_Y S \vpullback \ar[dl]_-{p_1} \ar[dr]^-{p_2} \\
	& R \ar[dl]_-{r_1} \ar[dr]_-{r_2} & & S \ar[dl]^-{s_1} \ar[dr]^-{s_2} \\
	X & & Y & & Z.}
\]

\begin{lemma}[\cite{CKP}]\label{generalised els in ordinary SR}
Let $\Ccal$ be a regular category. Consider ordinary relations $R\to X\times Y$, $S\to Y\times Z$, and generalised elements $x\colon A\to X$, $z\colon A\to Z$. Then
$(x,z)\in_A SR$ if and only if there exists a regular epimorphism $b\colon B \to A$ and a morphism $y\colon B\to Y$ such that $(xb,y)\in_B R$ and $(y,zb)\in_B S$.
\end{lemma}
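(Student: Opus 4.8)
The plan is to unwind the definition of the composite $SR$ and then treat the two implications separately, using pullback-stability of regular epimorphisms for one direction and the diagonal fill-in of the (regular epimorphism, monomorphism) factorisation system for the other. Write the composite as the factorisation $R\times_Y S \xrightarrow{e} SR \xrightarrow{m} X\times Z$, where $e$ is a regular epimorphism, $m=\langle q_1,q_2\rangle$ is a monomorphism, and $p_1,p_2$ are the projections of the pullback $R\times_Y S$ of $r_2$ and $s_1$; thus $q_1 e = r_1 p_1$, $q_2 e = s_2 p_2$, and $r_2 p_1 = s_1 p_2$. A witness for $(x,z)\in_A SR$ is then a morphism $w\colon A\to SR$ with $q_1 w = x$ and $q_2 w = z$.

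For the forward implication, suppose $(x,z)\in_A SR$ with witness $w$. First I would form the pullback of $w$ along $e$, producing $b\colon B\to A$ together with $\bar w\colon B\to R\times_Y S$ satisfying $e\bar w = wb$. Since $e$ is a regular epimorphism and $\Ccal$ is regular, $b$ is again a regular epimorphism. Setting $\rho=p_1\bar w$, $\sigma=p_2\bar w$ and $y=r_2\rho$, the pullback identity gives $y = r_2 p_1\bar w = s_1 p_2\bar w = s_1\sigma$. Then $r_1\rho = q_1 e\bar w = q_1 w b = xb$ and $s_2\sigma = q_2 e\bar w = q_2 w b = zb$, so $(xb,y)\in_B R$ and $(y,zb)\in_B S$, as required.

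For the converse, suppose we are given a regular epimorphism $b\colon B\to A$, a morphism $y\colon B\to Y$, and witnesses $\rho\colon B\to R$ with $(r_1\rho,r_2\rho)=(xb,y)$ and $\sigma\colon B\to S$ with $(s_1\sigma,s_2\sigma)=(y,zb)$. Because $r_2\rho=y=s_1\sigma$, the universal property of the pullback $R\times_Y S$ yields $u\colon B\to R\times_Y S$ with $p_1 u=\rho$ and $p_2 u=\sigma$. Composing with $e$ gives $q_1 e u = r_1\rho = xb$ and $q_2 e u = s_2\sigma = zb$, i.e. $m(eu)=\langle x,z\rangle\, b$. Finally I would descend along $b$: the square formed by $eu$, $b$, $m$ and $\langle x,z\rangle$ commutes, and since $b$ is a regular epimorphism and $m$ a monomorphism, the diagonal fill-in property of the (regular epimorphism, monomorphism) factorisation system provides $d\colon A\to SR$ with $md=\langle x,z\rangle$; this $d$ witnesses $(x,z)\in_A SR$.

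The routine part is the diagram chasing that identifies $xb$, $y$ and $zb$; the two places that genuinely use the ambient structure are the forward direction, where pullback-stability of regular epimorphisms (regularity) is exactly what makes $b$ a regular epimorphism, and the converse, where the descent relies on the left orthogonality of regular epimorphisms to monomorphisms. The latter descent step is the one I would expect to be the main obstacle, since it is where one must pass from a statement holding only after pulling back along $b$ to the statement over $A$ itself.
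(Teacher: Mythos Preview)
Your proof is correct and is the standard argument for this well-known lemma. Note that the paper does not actually prove this statement: it is quoted from \cite{CKP} and stated without proof, so there is no ``paper's own proof'' to compare against; your argument is precisely the one found in the cited reference.
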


This lemma allows one to prove that, in a regular category $\Ccal$, the composition of relations is associative. We get a bicategory $\Rel(\Ccal)$ of ordinary relations in $\Ccal$:
\begin{itemize}
	\item a 0-cell in $\Rel(\Ccal)$ is an object of $\Ccal$;
	\item a 1-cell from $X$ to $Y$ is an ordinary relation $R\to X\times Y$, also denoted by $R\colon X \relto Y$;
	\item a 2-cell from $R$ to $R'$ is denoted by $R\subseteq R'$, and holds when $R$ factors through $R'$
	\begin{equation}\label{R <= R'}
		\vcenter{\xymatrix@R=10pt{R \ar[dr]_-{\la r_1,r_2\ra} \ar@{.>}[rr] & & R'. \ar[dl]^-{\la r'_1,r'_2\ra}\\
		& X\times Y}}
	\end{equation}
	We write $R\cong R'$ when $R\subseteq R'$ and $R'\subseteq R$;
	\item the identity 1-cell on $X$ is given by the \emph{identity ordinary relation} $\Delta_X=\la 1_X,1_X\ra \colon X\to X\times X$.
\end{itemize}
	
From~\cite{FreydScendrov1990}, $\Rel(\Ccal)$ is a tabular allegory, with anti-involution given by taking the opposite ordinary relation. Freyd's \emph{modular laws} hold: given ordinary relations $R\colon X \relto Y$, $S\colon Y \relto Z$ and $T\colon X \relto Z$ we have
\begin{equation}\label{Freyd1}
	SR\wedge T \subseteq S(R\wedge S^\circ T)
\end{equation}
and
\begin{equation}\label{Freyd2}
	SR\wedge T \subseteq (S\wedge TR^\circ)R.
\end{equation}

Given an arbitrary category $\Ccal$, any morphism $f\colon X\to Y$ of $\Ccal$ induces two ordinary relations $X\stackrel{1_X}{\longleftarrow} X \stackrel{f}{\longrightarrow} Y$, denoted by $f_\nt$, and $Y\stackrel{f}{\longleftarrow} X \stackrel{1_X}{\longrightarrow} X$, denoted by $f^\nt$. If $\Ccal$ is a regular category, for every morphism $f\colon X\to Y$ in $\Ccal$, $f_\nt$ is a \emph{map} (in the sense of Lawvere) in $\Rel(\Ccal)$, meaning that it admits a right adjoint $f^\circ$, i.e. $f_\nt \dashv f^\circ$, so that the inclusions $\Delta_X\subseteq f^\circ f_\nt$ and $f_\nt f^\circ \subseteq \Delta_Y$ hold in $\Rel(\Ccal)$. On the other hand, taking a map $X\stackrel{r_1}{\longleftarrow} R \stackrel{r_2}{\longrightarrow} Y$ in $\Rel(\Ccal)$ guarantees that $r_1$ is a monomorphism and a regular epimorphism, which is necessarily an isomorphism; thus, $R\cong (r_2)_\nt$.

\begin{remark}\label{R=r_2r_1^op} Let $X\stackrel{r_1}{\longleftarrow} R \stackrel{r_2}{\longrightarrow} Y$ be an ordinary relation in a regular category $\Ccal$. It is easy to check that $R\cong (r_2)_\nt (r_1)^\circ$ and $R^\circ \cong (r_1)_\nt (r_2)^\circ$ ($R^\circ \cong ((r_2)_\nt (r_1)^\circ)^\circ\cong ((r_1)^\circ)^\circ (r_2)^\circ\cong (r_1)_\nt (r_2)^\circ$).
\end{remark}

\begin{remark}\label{difunctional in a regular cat} Let $\Ccal$ be a regular category. Then an ordinary relation $D\colon X \relto Y$ is difunctional (Definition~\ref{def of difunctional}) when $DD^\circ D\subseteq D$. Since $D\subseteq DD^\circ D$ always holds, $D$ is difunctional if and only $DD^\circ D\cong D$. Given any morphism $f\colon X\to Y$, one always has $f_\nt f^\circ f_\nt\cong f_\nt$ and $f^\circ f_\nt f^\circ \cong f^\circ$, which proves that $f_\nt$ and $f^\circ$ are examples of difunctional ordinary relations.
\end{remark}

\begin{remark}\label{refl, symm, trans in a regular cat} An ordinary relation $X\stackrel{r_1}{\longleftarrow} R \stackrel{r_2}{\longrightarrow} X$ in a category $\Ccal$ is:
\begin{itemize}
	\item reflexive when $(1_X,1_X)\in_X R$, meaning that there exists a morphism $e\colon X\to R$ such that $r_1e=1_X=r_2e$; equivalently $\Delta_X\subseteq R$;
	\item symmetric when $(r_2,r_1)\in_R R$, meaning that there exists a morphism $s\colon R\to R$ such that $r_1s=r_2$ and $r_2s=r_1$; also $R^\circ\subseteq R$ or, equivalently, $R^\circ \cong R$;
\end{itemize}
If $\Ccal$ is a regular category, so that composition of ordinary relations exists, $R$ is:
\begin{itemize}
	\item transitive when $RR\subseteq R$;
	\item an ordinary equivalence relation when it is reflexive, symmetric, and transitive, so that  $\Delta_X\subseteq R$, $R^\circ \cong R$ and $RR\cong R$ ($R\subseteq RR$ follows from the reflexivity of $R$).
\end{itemize}
\end{remark}

There are many other properties concerning (the calculus of) ordinary relations which can be found in~\cite{CKP}. Instead of recalling them all here, we focus on their generalisations to the context of (regular) $\Ord$-enriched categories next.

\section{Relations in the \texorpdfstring{$\Ord$}{Ord}-enriched context}\label{Relations in the Ord-enriched context}

We extend the content of Section~\ref{Relations in the regular context context} to the enriched context. We shall use the same names and notation whenever it is possible. In this section $\C$ denotes an $\Ord$-category.

A \defn{relation} from an object $X$ to an object $Y$ of $\C$ is a span $X\stackrel{r_1}{\longleftarrow} R \stackrel{r_2}{\longrightarrow} Y$ such that $(r_1,r_2)$ is jointly ff-monomorphic. The \emph{opposite} relation $R^\circ$ is the span $Y\stackrel{r_2}{\longleftarrow} R \stackrel{r_1}{\longrightarrow} X$. If $\C$ admits binary 2-products, then a relation is given by an ff-monomorphism $\la r_1,r_2\ra \colon R\mono X\times Y$. When $X=Y$, we simply say that $R$ is a relation on $X$.

Given morphisms $x\colon A\to X$ and $y\colon A\to Y$ of $\C$, we use the same notation $(x,y)\in_A R$, or $xRy$, when there exists a factorisation as in \eqref{in R}.

\begin{example}\label{any f/g is a relation}
\begin{enumerate}
\item[1.]
Any comma object in an $\Ord$-category $\C$
\[
\xymatrix@=30pt{ f/g \ar[r]^-{\pi_2} \ar[d]_-{\pi_1} \ar@{}[dr]|-{\moi} & Z \ar[d]^-g \\
						      X \ar[r]_-f & Y,}
\]
gives a relation $X\stackrel{\pi_1}{\longleftarrow} f/g \stackrel{\pi_2}{\longrightarrow} Z$ since $( \pi_1,\pi_2)$ is jointly ff-monomorphic by Definition~\ref{comma obj def}. Moreover, given generalised elements $x\colon A\to X$, $z\colon A\to Z$, we have
\begin{equation}\label{in f/g}
 (x,z)\in_A f/g \Leftrightarrow fx\moi gz.
\end{equation}
\item[2.] When $f=g=1_X$, we write $I_X=1_X/1_X$ and denote its projections by $x_1,x_2\colon I_X\to X$. Given two generalised elements $x,x'\colon A\to X$, we have $(x,x')\in_A I_X$ if and only if $x\moi x'$.
\end{enumerate}
\end{example}

To define the composition of relations, we must assume $\C$ to be a regular $\Ord$-category. Given relations $\la r_1,r_2\ra \colon R \mono X\times Y$ and $\la s_1,s_2\ra \colon S \mono Y\times Z$, the composite relation $SR\mono X\times Z$ is defined through the (so-morphism, ff-monomorphism) factorisation of $\la r_1p_1, s_2p_2\ra$ in
\[
	\xymatrix@R=10pt{R\times_Y S \ar[rr]^-{\la r_1p_1, s_2p_2\ra} \ar@{->>}[dr]_(.4){\text{so-morphism\;\;\;}} & & X\times Z, \\ & SR \ar@{ >->}[ur]_(.6){\text{\;\;\;\;\;\;\;ff-monomorphism}}}
\]
given the 2-pullback
\[
	\xymatrix@!0@=40pt{ & & R\times_Y S \vpullback \ar[dl]_-{p_1} \ar[dr]^-{p_2} \\
	& R \ar[dl]_-{r_1} \ar[dr]_-{r_2} & & S \ar[dl]^-{s_1} \ar[dr]^-{s_2} \\
	X & & Y & & Z.}
\]

The $\Ord$-enriched version of Lemma~\ref{generalised els in ordinary SR} holds in $\C$, with the difference that ``regular epimorphism'' is now ``so-morphism''.

\begin{lemma}[\cite{Vassilis}]\label{generalised els in SR}
Let $\C$ be a regular $\Ord$-category. Consider relations $R\mono X\times Y$, $S\mono Y\times Z$, and generalised elements $x\colon A\to X$, $z\colon A\to Z$. Then
$(x,z)\in_A SR$ if and only if there exists an so-morphism $b\colon B \repi A$ and a morphism $y\colon B\to Y$ such that $(xb,y)\in_B R$ and $(y,zb)\in_B S$.
\end{lemma}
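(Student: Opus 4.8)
The plan is to mirror the proof of the $1$-dimensional Lemma~\ref{generalised els in ordinary SR} recorded in~\cite{CKP}, systematically replacing regular epimorphisms by so-morphisms, monomorphisms by ff-monomorphisms, and ordinary pullbacks by $2$-pullbacks; the two structural axioms of Definition~\ref{enriched regular cat} that carry the argument are (R2) (the orthogonality built into the factorisation system) and (R3) (stability of so-morphisms under $2$-pullbacks). Throughout, write the defining factorisation of the composite as $\la r_1 p_1, s_2 p_2\ra = \la t_1, t_2\ra\, e$, where $e\colon R\times_Y S\repi SR$ is an so-morphism and $\la t_1,t_2\ra\colon SR\mono X\times Z$ is the ff-monomorphism exhibiting $SR$ as a relation, so that $t_1 e = r_1 p_1$ and $t_2 e = s_2 p_2$.

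For the ``only if'' direction, suppose $(x,z)\in_A SR$, witnessed by $d\colon A\to SR$ with $t_1 d = x$ and $t_2 d = z$. I would form the $2$-pullback of $e$ along $d$,
\begin{equation*}
\vcenter{\xymatrix{B \ar[r]^-{c} \ar[d]_-{b} & R\times_Y S \ar[d]^-{e} \\ A \ar[r]_-{d} & SR,}}
\end{equation*}
so that $d b = e c$. By (R3), $b\colon B\repi A$ is again an so-morphism. Setting $y = r_2 p_1 c = s_1 p_2 c$ (the two agree because $c$ factors through the $2$-pullback $R\times_Y S$), the element $p_1 c\colon B\to R$ witnesses $(xb,y)\in_B R$, since $r_1 p_1 c = t_1 e c = t_1 d b = xb$ and $r_2 p_1 c = y$; dually $p_2 c\colon B\to S$ witnesses $(y,zb)\in_B S$, since $s_2 p_2 c = t_2 e c = t_2 d b = zb$. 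This produces the required $b$ and $y$.

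For the ``if'' direction, suppose given an so-morphism $b\colon B\repi A$ and $y\colon B\to Y$ with $(xb,y)\in_B R$ and $(y,zb)\in_B S$, witnessed by $\phi\colon B\to R$ and $\psi\colon B\to S$. Since $r_2\phi = y = s_1\psi$, the (strict, $1$-dimensional part of the) universal property of the $2$-pullback yields a unique $\theta\colon B\to R\times_Y S$ with $p_1\theta=\phi$ and $p_2\theta=\psi$; then $e\theta\colon B\to SR$ satisfies $t_1 e\theta = r_1\phi = xb$ and $t_2 e\theta = s_2\psi = zb$, i.e. $\la t_1,t_2\ra\, e\theta = \la x,z\ra\, b$. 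The final, and key, step is to descend $e\theta$ along $b$: because $b$ is an so-morphism and $\la t_1,t_2\ra$ is an ff-monomorphism, the diagonal fill-in of (R2) applied to the commutative square with sides $b$, $\la x,z\ra$, $e\theta$ and $\la t_1,t_2\ra$ provides a morphism $d\colon A\to SR$ with $\la t_1,t_2\ra d = \la x,z\ra$, hence $t_1 d = x$ and $t_2 d = z$, giving $(x,z)\in_A SR$.

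The part demanding care --- and the genuine enriched input --- is this last descent: in the $1$-dimensional setting one invokes orthogonality of regular epimorphisms to monomorphisms, whereas here one must use that so-morphisms are, by their very definition in (R2), left orthogonal to ff-monomorphisms, and that the factorisation of $\la r_1 p_1, s_2 p_2\ra$ is a bona fide (so-morphism, ff-monomorphism) factorisation. One should also confirm that the $2$-pullback $R\times_Y S$ supplies the ordinary equalising universal property used to build $\theta$, which it does since a $2$-pullback is in particular a pullback in the underlying category. Everything else is the routine diagram-chasing transcribed from the $1$-dimensional argument.
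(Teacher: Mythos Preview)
Your proof is correct. The paper itself does not provide a proof of this lemma, instead citing~\cite{Vassilis}; your argument is exactly the standard transcription of the $1$-dimensional proof of Lemma~\ref{generalised els in ordinary SR} to the enriched setting, using (R3) for the ``only if'' direction and the orthogonality of so-morphisms against ff-monomorphisms from (R2) for the ``if'' direction, which is precisely the approach one would expect (and the one in~\cite{Vassilis}).
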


This lemma allows one to prove that, in a regular $\Ord$-category $\C$, the composition of relations is associative. We get a bicategory $\Relf(\C)$ of relations in $\C$:
\begin{itemize}
	\item a 0-cell in $\Relf(\C)$ is an object of $\C$;
	\item a 1-cell from $X$ to $Y$ is a relation $R\mono X\times Y$;
	\item a 2-cell from $R$ to $R'$ is denoted by $R\subseteq R'$, and holds when $R$ factors through $R'$ as in \eqref{R <= R'};
	\item the identity 1-cell on $X$ is given by the \emph{identity relation} $\Delta_X=\la 1_X,1_X\ra \colon X\mono X\times X$.
\end{itemize}
	
From~\cite{FreydScendrov1990}, $\Relf(\C)$ is a tabular allegory, with anti-involution given by taking the opposite relation. Freyd's \emph{modular laws} still hold in $\Relf(\C)$: see \eqref{Freyd1} and \eqref{Freyd2}.

\section{Order ideals and their calculus of relations}\label{Order ideals and their calculus of relations}

The bicategory $\Relf(\C)$ does not capture entirely the enriched features of a regular $\Ord$-category. To do so, we shall consider relations with a kind of ``compatibility'' condition. Such relations were called weakening-closed in~\cite{Kurz2023, Vassilis}. We prefer to follow~\cite{CarboniStreet} and call them (order) ideals.

\begin{definition}\label{w-c relation} A relation $X\stackrel{r_1}{\longleftarrow} R \stackrel{r_2}{\longrightarrow} Y$ in an $\Ord$-category $\C$ is called an \defn{ideal} when, given generalised elements $x,x'\colon A\to X$, $y,y'\colon A\to Y$, we have
\begin{equation}\label{w-c def}
	\left( x'\moi x \;\wedge\; (x,y)\in_A R \;\wedge\; y\moi y' \right) \Rightarrow (x',y')\in_A R.
\end{equation}
\end{definition}
\noindent Note that an ideal $X\stackrel{r_1}{\longleftarrow} R \stackrel{r_2}{\longrightarrow} Y$ is a relation, by definition. So, $(r_1,r_2)$ is jointly ff-monomorphic. We use the notation $R\colon X\looparrowright Y$ for ideals.

\begin{example}\label{any f/g is w-c relation}
Any comma object in an $\Ord$-category $\C$
\[
\xymatrix@=30pt{ f/g \ar[r]^-{\pi_2} \ar[d]_-{\pi_1} \ar@{}[dr]|-{\moi} & Z \ar[d]^-g \\
						      X \ar[r]_-f & Y,}
\]
gives an ideal $X\stackrel{\pi_1}{\longleftarrow} f/g \stackrel{\pi_2}{\longrightarrow} Z$. Indeed, we already know that it is a relation by Example~\ref{any f/g is a relation}. Also, given generalised elements $x,x'\colon A\to X$, $z,z'\colon A\to Z$ such that $x'\moi x$, $(x,z)\in_A f/g$ and $z\moi z'$, then
\[ fx'\moi fx\stackrel{\eqref{in f/g}}{\moi} gz\moi gz';\]
we get $(x',z')\in_A f/g$.
\end{example}

If $\C$ is a regular $\Ord$-category, the composition of ideals is still an ideal. We denote by $\Relw(\C)$ the bicategory of ideals in $\C$, where identities are the ideals $I_X$, for every object $X$ of $\C$. It is easy to check that a relation $R\mono X\times Y$ in $\Relf(\C)$ is an ideal if and only if $R\cong I_YRI_X$. Consequently, $I_Y R I_X$ is always an ideal; we call it the \emph{ideal generated by $R$}.

Given an $\Ord$-category $\C$, any morphism $f\colon X\to Y$ of $\C$ can be seen as a relation $X\stackrel{1_X}{\longleftarrow} X \stackrel{f}{\longrightarrow} Y$, which is not necessarily an ideal. However, we can associate to any morphism $f\colon X\to Y$ two canonical ideals $f_*=f/1_Y\colon X\looparrowright Y$ and $f^*=1_Y/f\colon Y\looparrowright X$. It is easy to check that $I_X\subseteq f^*f_*$ and $f_*f^*\subseteq I_Y$. So, any morphism $f\colon X\to Y$ gives rise to an adjunction $f_*\dashv f^*$ in $\Relw(\C)$. The converse also holds, as we show below in Theorem~\ref{R = f_*} (see~\cite[Theorem 3.8]{Vassilis} for the $\Pos$-enriched case).

The following results are easy to prove and most can be found in~\cite{Vassilis}. Those results which are provided with a proof are new. Results involving relations (which are not necessarily ideals) and ideals are meant to hold in $\Relf(\C)$; this is explicitly added after the result. All the other results only involving ideals hold in $\Relw(\C)$, without explicitly mentioning it.

\begin{lemma}\label{basic lemma for f*} Let $\C$ be a regular $\Ord$-category. Consider morphisms $f,h\colon X\to Y$ and $g\colon Y\to Z$. Then:
\begin{itemize}
	\item[(1)] $f_*\cong I_Yf_\nt$ and $f^*\cong f^\circ I_Y$ in $\Relf(\C)$;
	\item[(2)] $f_\nt\subseteq f_*$ and $f^\circ \subseteq f^*$ in $\Relf(\C)$;
	\item[(3)] $(gf)_* \cong g_* f_*$;
	\item[(4)] $(gf)^*\cong f^* g^*$;
	\item[(5)] $f\moi h \Leftrightarrow h_*\subseteq f_* \Leftrightarrow f^*\subseteq h^*$;
	\item[(6)] $I_X\subseteq f^*f_*$;
	\item[(7)] $f_*f^*\subseteq I_Y$;
	\item[(8)] $f^*f_*f^*\cong f^*$ and $f_*f^*f_*\cong f_*$.
\end{itemize}
\end{lemma}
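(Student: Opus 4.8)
The plan is to compute throughout with generalised elements, via the dictionary $(x,y)\in_A f_*\Leftrightarrow fx\moi y$ and $(y,x)\in_A f^*\Leftrightarrow y\moi fx$ (both instances of \eqref{in f/g}), together with $(x,y)\in_A f_\nt\Leftrightarrow y=fx$, $(y,x)\in_A f^\circ\Leftrightarrow y=fx$, and $(y,y')\in_A I_Y\Leftrightarrow y\moi y'$ (Example~\ref{any f/g is a relation}). The one enriched ingredient behind every ``forward'' implication is a cancellation property of so-morphisms, which I would isolate first: if $b\colon B\repi A$ is an so-morphism and $\alpha b\moi\beta b$ for $\alpha,\beta\colon A\to D$, then $\alpha\moi\beta$. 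This follows from the factorisation system of Definition~\ref{enriched regular cat}(R2): the inequality $\alpha b\moi\beta b$ says that $\langle\alpha,\beta\rangle b$ factors through the ff-monomorphism $I_D\mono D\times D$, so orthogonality of $b$ against this ff-monomorphism lifts the factorisation to $\langle\alpha,\beta\rangle$ itself, giving $\alpha\moi\beta$. (For \emph{equalities} $\alpha b=\beta b$ one only needs that so-morphisms are epimorphic.) I expect this cancellation to be the sole genuine obstacle; granted it, each item is a short manipulation of Lemma~\ref{generalised els in SR}.

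\textbf{Parts (1) and (2).} By Lemma~\ref{generalised els in SR}, $(x,y')\in_A I_Yf_\nt$ iff some $b\colon B\repi A$ and $y\colon B\to Y$ satisfy $y=fxb$ and $y\moi y'b$, i.e.\ $fxb\moi y'b$; the implication ``$fx\moi y'$'' is recovered by cancelling $b$, while the converse is witnessed by $b=1_A$, $y=fx$, so $I_Yf_\nt\cong f_*$. The statement $f^\circ I_Y\cong f^*$ is the mirror computation. (Alternatively, (1) has a purely formal proof: pulling back the first projection of the comma object $I_Y=1_Y/1_Y$ along $f$ yields, by Lemma~\ref{2 comma. 12 comma <=> 1 pb}, the comma object $f/1_Y=f_*$, and the comparison span is already an ff-monomorphism, so it is itself the composite $I_Yf_\nt$.) Part (2) needs no cancellation: $y=fx$ forces $fx\moi y$ and $y\moi fx$, so $f_\nt\subseteq f_*$ and $f^\circ\subseteq f^*$ directly.

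\textbf{Parts (3), (4), (5).} For (3), reading $g_*f_*$ through Lemma~\ref{generalised els in SR} gives $b,y$ with $fxb\moi y$ and $gy\moi zb$, whence $gfxb\moi zb$ and, cancelling $b$, $gfx\moi z$, i.e.\ $g_*f_*\subseteq(gf)_*$; the reverse inclusion uses $b=1_A$, $y=fx$. Part (4) is the opposite-relation mirror of (3). Part (5) is a Yoneda-style argument: $f\moi h$ yields $fx\moi hx$ and hence $h_*\subseteq f_*$ and $f^*\subseteq h^*$ pointwise, while the two converses are read off the universal elements $(1_X,h)\in_X h_*$ and $(f,1_X)\in_X f^*$, which transport the inclusions back to $f\moi h$.

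\textbf{Parts (6), (7), (8).} Items (6) and (7) are the unit and counit of the adjunction $f_*\dashv f^*$: for (6), $x\moi x'$ factors through $y=fx$ with $fx\moi fx$ and $fx\moi fx'$, giving $I_X\subseteq f^*f_*$; for (7), any element of $f_*f^*$ supplies $b,x$ with $yb\moi fx\moi y'b$, hence $yb\moi y'b$ and, cancelling $b$, $y\moi y'$, so $f_*f^*\subseteq I_Y$. Part (8) then follows formally as the two triangle identities, using that $I$ is the identity for ideals: from (7), $f_*f^*f_*\subseteq I_Yf_*\cong f_*$, and from (6), $f_*\cong f_*I_X\subseteq f_*f^*f_*$, so $f_*f^*f_*\cong f_*$; symmetrically $f^*f_*f^*\cong f^*$. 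The whole lemma thus reduces to the generalised-element dictionary together with the single cancellation property of so-morphisms.
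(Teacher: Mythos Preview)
Your argument is correct. The key observation---that an so-morphism $b$ cancels inequalities, i.e.\ $\alpha b\moi\beta b\Rightarrow\alpha\moi\beta$, via the diagonal fill-in against the ff-monomorphism $I_D\mono D\times D$---is exactly the right tool, and once it is in hand every item reduces to the generalised-element manipulations you describe. (A small remark: this cancellation is really built into the \emph{definition} of so-morphism as left-orthogonal to ff-monomorphisms, rather than into (R2); alternatively one could invoke (R4), since coinserters cancel 2-cells.) The paper itself does not supply a proof of this lemma: it states that the results ``are easy to prove and most can be found in~\cite{Vassilis}'', so there is no approach to compare against. Your write-up would serve perfectly well as the omitted verification.
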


From (3), (4) and (5) we get $\Ord$-enriched functors $(\;)_*:\C^\co \to \Relw(\C)$ and $(\;)^*:\C^{\mathrm{op}} \to \Relw(\C)$.

\begin{lemma}\label{lemma on g/f, ff-ms and so-ms} Let $\C$ be a regular $\Ord$-category. Consider morphisms $f\colon X\to Y$, $g\colon ~Z\to Y$ and $h \colon X\to  Z$. Then:
\begin{itemize}
	\item[(1)] $f/g\cong g^*f_*$;
	\item[(2)] $f$ is an ff-morphism if and only if $I_X\cong f/f \cong f^*f_*$;
	\item[(3)] if $\moi$ is a partial order, then $f$ is an ff-monomorphism if and only if $I_X\cong f/f \cong f^*f_*$;
	\item[(4)] if $f$ is an so-morphism, then $f_*f^*\cong I_Y$;
	\item[(5)] if $\moi$ is a partial order, then $f$ is an so-morphism if and only if $f_*f^*\cong I_Y$;
	\item[(6)] $\la f, h\ra$ is an ff-morphism if and only if ${f}^*{f}_*\wedge {h}^*{h}_*\cong I_X$;
	\item[(7)] if $\moi$ is a partial order, then $\la f, h\ra$ is an ff-monomorphism if and only if ${f}^*{f}_*\wedge {h}^*{h}_*\cong I_X$.
\end{itemize}
\end{lemma}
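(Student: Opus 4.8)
The plan is to reduce every equivalence to a computation with generalised elements, using the description \eqref{in f/g} of membership in a comma object together with the composite formula of Lemma~\ref{generalised els in SR}. Three of the seven items are then purely formal. Once (1) is established it gives $f/f\cong f^*f_*$ (take $g=f$), and it also supplies the element-wise reading $(x,x')\in_A f^*f_*\Leftrightarrow fx\moi fx'$ that feeds into (2) and (6). Moreover (3) and (7) follow at once from (2) and (6): when $\moi$ is antisymmetric an ff-morphism is automatically a monomorphism (as recalled in Section~\ref{Ord-enriched categories}), so ``ff-morphism'' and ``ff-monomorphism'' coincide and the stated relational conditions are unchanged.

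For (1) I would prove the two inclusions separately. The inclusion $f/g\subseteq g^*f_*$ is easy: if $(x,z)\in_A f/g$, that is $fx\moi gz$, then the identity so-morphism on $A$ together with the element $y=fx$ witnesses $(x,fx)\in_A f_*$ and $(fx,z)\in_A g^*$, so $(x,z)\in_A g^*f_*$ by Lemma~\ref{generalised els in SR}. For the reverse inclusion, a membership $(x,z)\in_A g^*f_*$ yields an so-morphism $b\colon B\repi A$ and $y\colon B\to Y$ with $fxb\moi y\moi gzb$, hence $f(xb)\moi g(zb)$ and $(xb,zb)\in_B f/g$; to descend along $b$ to $(x,z)\in_A f/g$ I would use that so-morphisms reflect the order after precomposition. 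This is the one genuinely two-dimensional step: it follows from the orthogonality of $b$ against the ff-monomorphism $\la x_1,x_2\ra\colon I_C\mono C\times C$ (where $x_1\moi x_2$), which produces a factorisation through $I_C$ and hence the required inequality. With (1) in hand, (2) is the observation that $f/f\cong I_X$ says exactly $fx\moi fx'\Leftrightarrow x\moi x'$, i.e.\ that $f$ is full; and (6) is the same computation over the $2$-product $Y\times Z$, whose order is componentwise, together with the element-wise description of the meet, so that $f^*f_*\wedge h^*h_*\cong I_X$ reads $(fx\moi fx'\wedge hx\moi hx')\Leftrightarrow x\moi x'$, which is fullness of $\la f,h\ra$.

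The two substantive items are (4) and (5). For (4) the inclusion $f_*f^*\subseteq I_Y$ is Lemma~\ref{basic lemma for f*}(7); for the converse $I_Y\subseteq f_*f^*$, given $y\moi y'\colon A\to Y$ I would form the $2$-pullback of the so-morphism $f$ along $y$, obtaining $b\colon B\repi A$ (an so-morphism by Definition~\ref{enriched regular cat}(R3)) and $x\colon B\to X$ with $fx=yb$. Then $yb\moi fx$ and $fx=yb\moi y'b$, so $(yb,x)\in_B f^*$ and $(x,y'b)\in_B f_*$, giving $(y,y')\in_A f_*f^*$. For (5) one direction is (4); the converse, assuming $\moi$ antisymmetric and $f_*f^*\cong I_Y$, is the crux. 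I would factor $f=me$ with $e$ an so-morphism and $m$ an ff-monomorphism (Definition~\ref{enriched regular cat}(R2)). By Lemma~\ref{basic lemma for f*}(3),(4) and by (4) applied to $e$ one gets $f_*f^*\cong m_*e_*e^*m^*\cong m_*m^*$, so $m_*m^*\cong I_Y$. Evaluating $I_Y\subseteq m_*m^*$ at $1_Y$ produces an so-morphism $b\colon B\repi Y$ and $z\colon B\to Z$ with $b\moi mz$ and $mz\moi b$; antisymmetry forces $mz=b$. Thus the so-morphism $b$ factors through $m$, so $m$ is an so-morphism by Lemma~\ref{pps for so-ms}(2). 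Being simultaneously an so-morphism and an ff-monomorphism, $m$ is orthogonal to itself and hence an isomorphism, whence $f$ is an so-morphism.

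I expect the converse of (5) to be the main obstacle, for two reasons: it is the only item that genuinely needs the antisymmetry hypothesis (used to pass from $b\moi mz\moi b$ to the equality $mz=b$, without which the cover need not factor through $m$), and it requires assembling the factorisation system, the behaviour of $(\;)_*,(\;)^*$ on composites, and the self-orthogonality argument into a single chain. The other delicate point, though lighter, is the order-reflection step in (1); once that is isolated, the remaining items are routine manipulations of generalised elements.
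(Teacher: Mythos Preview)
Your proposal is correct. The paper only proves (4) and (5), deferring the rest to the literature, and on those two items your arguments differ from the paper's but are equally valid.

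For (4), the paper does not pull back along a test element $y$. Instead it observes that $(f,f)\in_X f_*f^*$, uses orthogonality of the so-morphism $f$ against the ff-monomorphism $f_*f^*\mono Y\times Y$ to deduce $(1_Y,1_Y)\in_Y f_*f^*$, and then invokes the fact that $f_*f^*$ is an \emph{ideal} to pass from $(y_1,y_1)$ to $(y_1,y_2)$ for the projections $y_1\moi y_2$ of $I_Y$. Your route via $2$-pullback stability (R3) avoids appealing to the ideal property and is arguably more elementary; the paper's route is shorter but leans on the enriched structure of $\Relw(\C)$.

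For (5), the paper dispenses with the (so, ff-mono) factorisation entirely: it reads $(1_Y,1_Y)\in_Y f_*f^*$ directly via Lemma~\ref{generalised els in SR}, obtaining an so-morphism $z\colon Z\repi Y$ and $x\colon Z\to X$ with $z\moi fx\moi z$, whence $fx=z$ by antisymmetry and $f$ is so by Lemma~\ref{pps for so-ms}(2). Your detour through $f=me$ reaches the same conclusion but adds the extra step of first reducing to $m_*m^*\cong I_Y$ and then arguing that $m$ is an isomorphism; the paper's argument is the same computation applied to $f$ itself rather than to $m$.
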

\begin{proof}
(4) It is already known that $f_*f^*\subseteq I_Y$. Let $\la y_1,y_2\ra \colon I_Y \mono Y\times Y$ represent the projections of $I_Y=1_Y/1_Y$. Then, $y_1\moi y_2$ (see \eqref{in f/g}). It is easy to see that $(f,1_X)\in_X f^*$ and $(1_X,f)\in_X f_*$, so that $(f,f)\in_X f_*f^*$. If $f$ is an so-morphism, it follows that $(1_Y,1_Y)\in_Y f_*f^*$, i.e. there exists a factorisation such as
\[
\xymatrix@R=10pt{ & Y \\
		Y \ar[ur]^-{1_Y} \ar[dr]_-{1_Y} \ar@{.>}[rr] & & f_*f^*. \ar[ul] \ar[dl]\\
		& Y}
\]
Precomposing the dotted morphism with $y_1$, we get $(y_1,y_1)\in_{I_Y} f_*f^*$. Since $y_1\moi y_2$ and $f_*f^*$ is an ideal, then $(y_1,y_2)\in_{I_Y} f_*f^*$; this means precisely that $I_Y\subseteq f_*f^*$.

(5) Suppose that $f_*f^*\subseteq I_Y$. Since $(1_Y,1_Y)\in_Y I_Y\cong f_*f^*$, there exist an so-morphism $z\colon Z\repi Y$ and a morphism $x\colon Z\to X$ such that $(z,x)\in_Z f^*$ and $(x,z)\in_Z f_*$ (Lemma~\ref{generalised els in SR}). Using \eqref{in f/g}, we get $z\moi fx$ and $fx\moi z$. Since $\moi$ is a partial order, we conclude that $z=fx$. From Lemma~\ref{pps for so-ms}(2) we conclude that $f$ is an so-morphism.
\end{proof}

The following results generalise known ones concerning calculus of ordinary relations (see~\cite{CKP}).

\begin{proposition}\label{pps on w-c relations}Let $\C$ be a regular $\Ord$-category. Consider ideals $R, S\colon X\looparrowright Y$, $T\colon A\looparrowright B$, and morphisms $g\colon B\to Y$, $f\colon A\to X$, $k\colon Y\to B$, $h\colon X\to A$. Then:
\begin{itemize}
	\item[(1)] $g^*(R\wedge S)\cong g^*R \wedge g^*S$;
	\item[(2)] $(R\wedge S)f_*\cong Rf_* \wedge Sf_*$;
	\item[(3)] $k_*(R\wedge S)\subseteq k_*R \wedge k_*S$;
	\item[(4)] $(R\wedge S)h^*\subseteq Rh^* \wedge Sh^*$;
	\item[(5)] $g_*Tf^* \subseteq R \Leftrightarrow T\subseteq g^*Rf_*$.
\end{itemize}
\end{proposition}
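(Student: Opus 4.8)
My plan is to dispatch parts (3) and (4) by pure monotonicity, to prove part (5) from the two adjunctions $f_*\dashv f^*$ and $g_*\dashv g^*$ in $\Relw(\C)$, and then to recover parts (1) and (2) as special instances of (5). The key observation is that (1) and (2) assert that $g^*(-)$ and $(-)f_*$ preserve binary meets, and this is exactly the statement that these operations are \emph{right adjoints}, which is what (5) encodes.

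Parts (3) and (4) are immediate. Since $R\wedge S\subseteq R$ and $R\wedge S\subseteq S$ and composition in $\Relw(\C)$ is monotone in each variable, left-composing with $k_*$ gives $k_*(R\wedge S)\subseteq k_*R$ and $k_*(R\wedge S)\subseteq k_*S$, so by the universal property of the meet $k_*(R\wedge S)\subseteq k_*R\wedge k_*S$; right-composing with $h^*$ gives (4) identically. No reverse inclusion is expected here, because $k_*(-)$ and $(-)h^*$ are \emph{left} adjoints and hence need not preserve meets.

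For (5) I would argue by the (co)units supplied by Lemma~\ref{basic lemma for f*}. Assume first $g_*Tf^*\subseteq R$. Using the units $I_B\subseteq g^*g_*$ and $I_A\subseteq f^*f_*$ (Lemma~\ref{basic lemma for f*}(6)) together with the fact that $T$, being an ideal, satisfies $T\cong I_BTI_A$, monotonicity and associativity give $T\cong I_BTI_A\subseteq (g^*g_*)T(f^*f_*)\cong g^*(g_*Tf^*)f_*\subseteq g^*Rf_*$. Conversely, assume $T\subseteq g^*Rf_*$; using the counits $g_*g^*\subseteq I_Y$ and $f_*f^*\subseteq I_X$ (Lemma~\ref{basic lemma for f*}(7)) and $R\cong I_YRI_X$, I obtain $g_*Tf^*\subseteq g_*(g^*Rf_*)f^*\cong (g_*g^*)R(f_*f^*)\subseteq I_YRI_X\cong R$. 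This is the heart of the argument, and the only delicate point is that every expression in sight is genuinely an ideal, so that the identities $T\cong I_BTI_A$ and $R\cong I_YRI_X$ are available to collapse the units and counits.

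Finally I would read off (1) and (2). Taking $f=1_X$ in (5), so that $f_*\cong f^*\cong I_X$, yields the poset adjunction $g_*(-)\dashv g^*(-)$ between the hom-posets $\Relw(\C)(X,B)$ and $\Relw(\C)(X,Y)$; a right adjoint between posets preserves all existing meets, so $g^*(R\wedge S)\cong g^*R\wedge g^*S$, which is (1). Symmetrically, taking $g=1_Y$ gives $(-)f^*\dashv(-)f_*$, so the right adjoint $(-)f_*$ preserves meets and $(R\wedge S)f_*\cong Rf_*\wedge Sf_*$, which is (2). I expect the main obstacle to be precisely these two \emph{reverse} inclusions, i.e.\ meet-preservation; the trivial inclusions $g^*(R\wedge S)\subseteq g^*R\wedge g^*S$ and $(R\wedge S)f_*\subseteq Rf_*\wedge Sf_*$ are again just monotonicity. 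If one prefers a self-contained route avoiding (5), one can instead establish on generalised elements that $(x,b)\in_A g^*R\Leftrightarrow (x,gb)\in_A R$ and $(a,y)\in_A Rf_*\Leftrightarrow (fa,y)\in_A R$ via Lemma~\ref{generalised els in SR}, after which (1) and (2) are formal; in that route the only nonformal step, and hence the obstacle, is descending an incidence $(xc,(gb)c)\in_C R$ along an so-morphism $c\colon C\repi A$ to $(x,gb)\in_A R$, which is the diagonal fill-in of $c$ against the ff-monomorphism representing $R$.
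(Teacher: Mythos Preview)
Your proof is correct. Parts (3), (4), and (5) match the paper's argument essentially verbatim: monotonicity for (3) and (4), and the unit/counit inclusions of Lemma~\ref{basic lemma for f*}(6),(7) together with $T\cong I_BTI_A$ and $R\cong I_YRI_X$ for (5).

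For (1) and (2), however, your route differs from the paper's. The paper proves the nontrivial inclusion $g^*R\wedge g^*S\subseteq g^*(R\wedge S)$ directly with generalised elements: starting from $(x,u)\in_U g^*R\wedge g^*S$, it invokes Lemma~\ref{generalised els in SR} twice to produce so-morphisms $e,e'$ and intermediate elements $y,y'$, uses the ideal property of $R$ and $S$ to replace $y$ and $y'$ by $gu$, and then descends along the so-morphisms. You instead first establish (5) and then read off (1) and (2) by specialising $f=1_X$ (respectively $g=1_Y$) to obtain the poset adjunctions $g_*(-)\dashv g^*(-)$ and $(-)f^*\dashv(-)f_*$, concluding that the right adjoints preserve binary meets. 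This is more conceptual and entirely avoids a second element-chase; it also explains \emph{why} (1) and (2) are equalities while (3) and (4) are only inclusions (right versus left adjoints). The paper's direct argument, on the other hand, is self-contained and makes the role of the ideal hypothesis on $R$ and $S$ visible at the level of elements. The alternative element-level route you sketch at the end, via the equivalences $(x,b)\in g^*R\Leftrightarrow(x,gb)\in R$ and $(a,y)\in Rf_*\Leftrightarrow(fa,y)\in R$, is in fact precisely what the paper is doing.
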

\begin{proof}
(1) From $g^*(R\wedge S)\subseteq g^* R$ and $g^*(R\wedge S)\subseteq g^* S$, we conclude that $g^*(R\wedge S)\subseteq g^* R \wedge g^* S$.

Conversely, suppose that $(x,u)\in_U g^* R \wedge g^* S$. By Lemma~\ref{generalised els in SR}, there exist so-morphisms $e\colon V\repi U$, $e'\colon V'\repi U$ and morphisms $y\colon V\to Y$, $y'\colon V'\to Y$ such that $(xe,y)\in_V R$, $(y,ue)\in_V g^*$, $(xe',y')\in_{V'} S$, $(y',ue')\in_{V'} g^*$. We use \eqref{in f/g} and the fact that $R$ and $S$ are ideals to conclude that
\[\begin{array}{l}
	((xe,y)\in_V R\;\;\mathrm{and}\;\; y\moi gue) \;\;\Rightarrow \;\; (xe, gue)\in_V R; \\
	((xe',y')\in_{V'} S\;\;\mathrm{and}\;\; y'\moi gue') \;\;\Rightarrow \;\; (xe', gue')\in_{V'} S.
\end{array}
\]
Since $e$ and $e'$ are so-morphisms, we get $(x,gu)\in_U R$ and $(x,gu)\in_U S$, so $(x,gu)\in_U R\wedge S$. Since $(gu,u)\in_U g^*$, we may conclude that $(x,u)\in g^*(R\wedge S)$.

The proofs of (2), (3) and (4) follow similar arguments. The fact that (3) and (4) are only inclusions is a consequence of the compatibility property of ideals \eqref{w-c def} that does not apply for the other inclusions.

(5) Suppose that $g_*Tf^*\subseteq R$. Since $T$ is an ideal, then $T\cong I_B T I_A$. We have
\[
 T\cong I_BTI_A \stackrel{\mathrm{Lemma}~\ref{basic lemma for f*}(6)}{\subseteq} g^*g_* T f^* f_* \stackrel{\mathrm{assumption}}{\subseteq} g^*Rf_*.
\]
For the converse, suppose that $T\subseteq g^*R f_*$. Then
\[
 g_* Tf^* \stackrel{\mathrm{assumption}}{\subseteq} g_*g^* R f_* f^* \stackrel{\mathrm{Lemma}~\ref{basic lemma for f*}(7)}{\subseteq} I_Y R I_X \cong R,
\]
because $R$ is an ideal.
\end{proof}

When $\C$ is a regular $\Ord$-category, we may write $R\cong (r_2)_\nt (r_1)^\circ$ and $R^\circ \cong (r_1)_\nt (r_2)^\circ$ in $\Relf(\C)$ (see Remark~\ref{R=r_2r_1^op}). In $\Relw(\C)$ the role of $(\;)_\circ$, $(\;)^\circ$ is played by $(\;)_*$ and $(\;)^*$. Indeed, given an ideal $\la r_1,r_2\ra\colon R\mono X\times Y$, it is easily checked that $R\cong (r_2)_*(r_1)^*$. So, for any relation $\la r_1,r_2\ra\colon R\mono X\times Y$, we use the notation $R_*=(r_2)_*(r_1)^*$, and in fact $R_*$ is the ideal generated by $R$:
\[
	I_Y R I_X\cong I_Y(r_2)_\circ (r_1)^\circ I_X\cong (r_2)_*(r_1)^*=R_*.
\]
Now, to obtain the ideal which plays the role of the opposite (ideal) of $R$ in $\Relw(\C)$, we define $R^*=(r_1)_*(r_2)^*$. In particular, if $R=f_\nt$, for a morphism $f\colon X\to Y$, then $R^*=(1_X)_*(f)^*\cong I_X f^*\cong f^*$.

\begin{lemma}\label{R^* smallest ideal contains R^op}Let $\C$ be a regular $\Ord$-category. Given a relation $R\mono X\times Y$, $R^*$ is the smallest ideal containing $R^\circ$.
\end{lemma}
\begin{proof} We have $R^*=(r_1)_*(r_2)^*\cong I_X(r_1)_\circ(r_2)^\circ I_Y\cong I_XR^\circ I_Y\cong (R^\circ)_*$.
\end{proof}

\begin{lemma}\label{R<=S implies R*<=S*}Let $\C$ be a regular $\Ord$-category. If $R\mono X\times Y$, $S\mono X\times Y$ are relations such that $R\subseteq S$ (in $\Relf(\C)$), then $R_*\subseteq S_*$ and $R^*\subseteq S^*$.
\end{lemma}

We can now state an $\Ord$-enriched version for Freyd's modular laws.

\begin{proposition}\label{Freyd's enriched modular laws}Let $\C$ be a regular $\Ord$-category. For ideals $R\colon X\looparrowright Y$, $S\colon Y\looparrowright Z$ and $T\colon X\looparrowright Z$ we have
\begin{equation}\label{Freyd1 enriched}
	SR\wedge T \subseteq S(R\wedge S^* T)
\end{equation}
and
\begin{equation}\label{Freyd2 enriched}
	SR\wedge T \subseteq (S\wedge TR^*)R.
\end{equation}
\end{proposition}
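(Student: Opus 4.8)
The plan is to establish \eqref{Freyd1 enriched} by a direct computation with generalised elements that mirrors the classical proof of the modular law \eqref{Freyd1}, performing two systematic substitutions throughout: wherever the classical argument uses an opposite relation I would use the corresponding opposite ideal $(\;)^*$, and wherever it composes relations I would unravel the composite via Lemma~\ref{generalised els in SR}. Once \eqref{Freyd1 enriched} is in place, \eqref{Freyd2 enriched} follows by a symmetric argument (interchanging the two legs), or more slickly by applying the anti-involution $(\;)^*$ under which $\Relw(\C)$ is an allegory; so I would write only the first inequality in full.

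For \eqref{Freyd1 enriched} I would start from generalised elements $x\colon A\to X$, $z\colon A\to Z$ with $(x,z)\in_A SR\wedge T$, so that $(x,z)\in_A SR$ and $(x,z)\in_A T$. Applying Lemma~\ref{generalised els in SR} to the first membership yields an so-morphism $b\colon B\repi A$ and a morphism $y\colon B\to Y$ with $(xb,y)\in_B R$ and $(y,zb)\in_B S$; precomposing the second membership with $b$ gives $(xb,zb)\in_B T$. The key move is then to feed the element $y$ back in: from $(y,zb)\in_B S$ I read off $(zb,y)\in_B S^\circ$, and since $S^\circ\subseteq S^*$ by Corollary~\ref{R^* smallest ideal contains R^op}, I obtain $(zb,y)\in_B S^*$. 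Combining $(xb,zb)\in_B T$ with $(zb,y)\in_B S^*$ through Lemma~\ref{generalised els in SR} (identity so-morphism on $B$, intermediate element $zb$) gives $(xb,y)\in_B S^*T$, whence $(xb,y)\in_B R\wedge S^*T$. Finally, pairing $(xb,y)\in_B R\wedge S^*T$ with $(y,zb)\in_B S$ and invoking Lemma~\ref{generalised els in SR} once more --- this time with the so-morphism $b$ and intermediate element $y$ --- delivers $(x,z)\in_A S(R\wedge S^*T)$, which is exactly \eqref{Freyd1 enriched}.

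The step I expect to carry the real content is the passage $(zb,y)\in_B S^\circ\subseteq S^*$: in the enriched setting the naive opposite $S^\circ$ of an ideal need not itself be an ideal, so one cannot simply transport memberships along it as in the classical allegory. What makes the computation go through is that only the one-sided inclusion $S^\circ\subseteq S^*$ of Corollary~\ref{R^* smallest ideal contains R^op} is needed, together with the fact that the so-morphism $b$ produced by Lemma~\ref{generalised els in SR} can be reused at the last step to descend the conclusion from $B$ back to $A$. Beyond this, the only points to keep straight are the domains and codomains of the ideals $S^*\colon Z\looparrowright Y$ and (for the symmetric inequality) $R^*\colon Y\looparrowright X$, so that each appeal to Lemma~\ref{generalised els in SR} is correctly typed.
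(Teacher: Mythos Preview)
Your argument is correct: the element chase goes through exactly as you describe, and the only genuinely enriched ingredient is indeed the inclusion $S^\circ\subseteq S^*$ from Corollary~\ref{R^* smallest ideal contains R^op}.

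The paper, however, takes a shorter route. It has already recorded that $\Relf(\C)$ is a tabular allegory, so the classical modular law \eqref{Freyd1} holds there for arbitrary relations; in particular $SR\wedge T\subseteq S(R\wedge S^\circ T)$ with the naive opposite $S^\circ$. One then simply appeals to $S^\circ\subseteq S^*$ (Corollary~\ref{R^* smallest ideal contains R^op}) and monotonicity of the operations to upgrade this to $SR\wedge T\subseteq S(R\wedge S^* T)$; likewise for \eqref{Freyd2 enriched}. Your approach unpacks the allegorical statement into an explicit generalised-element computation via Lemma~\ref{generalised els in SR}, which has the virtue of being self-contained (it does not rely on the Freyd--Scedrov result for $\Relf(\C)$) and of making visible exactly where the ideal hypothesis is used; the paper's approach trades that transparency for brevity by leaning on the ambient allegory structure already established.
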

\begin{proof}These are immediate consequences of Freyd's modular laws \eqref{Freyd1} and \eqref{Freyd2} (in $\Relf(\C)$) and Lemma~\ref{R^* smallest ideal contains R^op}.
\end{proof}

\begin{proposition}\label{DD^op D is w-c} Let $\C$ be a regular $\Ord$-category and consider ideals $R\colon X \looparrowright Y$, $S \colon Z \looparrowright Y$ and $T \colon Z\looparrowright W$ in $\C$. Then $TS^\circ R\cong TS^* R$. In particular, $TS^\circ R$ is an ideal.
\end{proposition}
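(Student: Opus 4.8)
The plan is to reduce the statement to a short algebraic manipulation inside the bicategory $\Relf(\C)$, exploiting the description of $S^*$ as the ideal-closure of $S^\circ$. The only substantial ingredient is Corollary~\ref{R^* smallest ideal contains R^op}, which for the relation $S\colon Z\looparrowright Y$ gives $S^* \cong I_Z S^\circ I_Y$; everything else is bookkeeping with the identity ideals.

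First I would note the trivial inclusion: since $S^\circ \subseteq S^*$ (by Corollary~\ref{R^* smallest ideal contains R^op}, as $S^*$ is the smallest ideal containing $S^\circ$) and composition is monotone, $TS^\circ R \subseteq TS^* R$. So the content is the reverse inclusion, which I would in fact obtain directly as an equality. Starting from $S^* \cong I_Z S^\circ I_Y$ and using associativity of composition in $\Relf(\C)$, I would write
\[
TS^* R \;\cong\; T\,(I_Z S^\circ I_Y)\,R \;\cong\; (T I_Z)\,S^\circ\,(I_Y R).
\]
The next step is to absorb the two identity ideals. Since $R\colon X\looparrowright Y$ and $T\colon Z\looparrowright W$ are ideals, they satisfy $R\cong I_Y R I_X$ and $T\cong I_W T I_Z$; combined with the idempotency $I_Y I_Y\cong I_Y$ and $I_Z I_Z\cong I_Z$ of the identity ideals, this yields $I_Y R\cong R$ and $T I_Z\cong T$. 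Substituting gives $TS^* R\cong T S^\circ R$, the desired isomorphism. The ``in particular'' clause is then immediate: $TS^* R$ is a composite of ideals, hence an ideal, and $TS^\circ R\cong TS^* R$ inherits this.

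The point I expect to require the most care is not any hard estimate but the choice of ambient bicategory. Since $S^\circ$ is in general not an ideal, the composite $TS^\circ R$ does not live in $\Relw(\C)$, so the whole computation must be carried out in $\Relf(\C)$, where $I_Y$ and $I_Z$ are ordinary relations rather than strict identities. The delicate bookkeeping is therefore to verify that $R$ and $T$, being ideals, absorb the identity ideals $I_Y$ and $I_Z$ that surface when $S^*$ is unfolded; once these absorption identities are secured, associativity in the tabular allegory $\Relf(\C)$ closes the argument.
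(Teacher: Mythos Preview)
Your argument is correct and is essentially the paper's own proof: unfold $S^*\cong I_Z S^\circ I_Y$ via Corollary~\ref{R^* smallest ideal contains R^op} and absorb the identity ideals into the adjacent ideals $T$ and $R$. The paper compresses this into a single line, while you spell out the absorption steps and the ambient-bicategory caveat, but the content is identical.
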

\begin{proof} From Lemma~\ref{R^* smallest ideal contains R^op} we have: $T S^* R\cong T I_Y S^\circ I_Z R\cong T S^\circ R$.
\end{proof}

\begin{proposition}\label{2-pb of ideal is an ideal} Consider an ideal $R\colon X\looparrowright Y$ and morphisms $f\colon U\to X$, $g\colon V\to Y$ in an $\Ord$-category $\C$. The relation $S\mono U\times V$ given by the $2$-pullback of $R\mono X\times Y$ along $f\times g$
\[
	\xymatrix{S\cong g^\circ R f_\nt \pullback \ar[r]^-h \ar@{ >->}[d]_-{\la s_1,s_2\ra} & R \ar@{ >->}[d]^-{\la r_1,r_2\ra} \\
		U\times V \ar[r]_-{f\times g} & X\times Y,}
\]
is an ideal. If $\C$ is a regular $\Ord$-category, then $g^\circ R f_\nt \cong g^* R f_*$. In particular, the inverse image of an ideal $T\colon X\looparrowright X$ by the morphism $f$, denoted $f^{-1}(T)$, is an ideal such that $f^{-1}(T) \cong f^* T f_*$.
\end{proposition}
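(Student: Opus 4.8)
The plan is to reduce everything to a clean description of the generalised elements of $S$. First I would note that $S$ is genuinely a relation: since $\la r_1,r_2\ra$ is an ff-monomorphism, so is its $2$-pullback $\la s_1,s_2\ra$ along $f\times g$, by Lemma~\ref{pps for ff-ms}(3). The crucial step is then the membership characterisation: for generalised elements $u\colon A\to U$ and $v\colon A\to V$,
\[
 (u,v)\in_A S \iff (fu,gv)\in_A R.
\]
The forward direction is immediate from the commutativity $f s_1 = r_1 h$ and $g s_2 = r_2 h$ of the $2$-pullback square: a witness $a\colon A\to S$ of $(u,v)\in_A S$ yields the witness $ha\colon A\to R$ of $(fu,gv)\in_A R$. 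The backward direction uses only the $1$-dimensional universal property of the $2$-pullback, since a witness $w\colon A\to R$ of $(fu,gv)\in_A R$ satisfies $(f\times g)\la u,v\ra=\la r_1,r_2\ra w$ and therefore induces a factorisation $a\colon A\to S$ with $\la s_1,s_2\ra a=\la u,v\ra$.

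With this characterisation in hand, the ideal property of $S$ is a direct computation. Given $u'\moi u$, $(u,v)\in_A S$ and $v\moi v'$, monotonicity of composition gives $fu'\moi fu$ and $gv\moi gv'$; since $(fu,gv)\in_A R$ and $R$ is an ideal, \eqref{w-c def} yields $(fu',gv')\in_A R$, whence $(u',v')\in_A S$ by the characterisation. I would stress that this part requires no regularity, only the existence of the $2$-pullback and the enrichment of composition.

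For the regular case I would argue algebraically. Using $R\cong I_Y R I_X$ (the defining property of an ideal) together with Lemma~\ref{basic lemma for f*}(1), which gives $g^*\cong g^\circ I_Y$ and $f_*\cong I_X f_\nt$, I compute
\[
 g^* R f_* \cong (g^\circ I_Y)\, R\, (I_X f_\nt) \cong g^\circ (I_Y R I_X) f_\nt \cong g^\circ R f_\nt.
\]
The identification $S\cong g^\circ R f_\nt$ in $\Relf(\C)$ is the standard relational description of a $2$-pullback along a product of maps, and can be checked on generalised elements exactly as above: one shows $(u,v)\in_A g^\circ R f_\nt\iff(fu,gv)\in_A R$ using Lemma~\ref{generalised els in SR}. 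Combined with the displayed identity, this also re-confirms that $S$ is an ideal in the regular setting, since $g^* R f_*$ is a composite of ideals.

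Finally, the inverse image is the special case $g=f$, $V=U$, $Y=X$ and $R=T\colon X\looparrowright X$: the relation $f^{-1}(T)$ is by definition the $2$-pullback of $T$ along $f\times f$, so the previous paragraphs give $f^{-1}(T)\cong f^\circ T f_\nt\cong f^* T f_*$, and it is an ideal. I expect the only genuinely non-formal point to be the membership characterisation of $S$, and, in the relational identification $S\cong g^\circ R f_\nt$, the descent of the witnesses supplied by Lemma~\ref{generalised els in SR} along the relevant so-morphism, which relies on the orthogonality of so-morphisms to ff-monomorphisms; everything else is formal manipulation.
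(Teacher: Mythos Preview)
Your argument is correct. The treatment of the first claim (that $S$ is an ideal) is essentially identical to the paper's: both pass from $(u,v)\in_A S$ to $(fu,gv)\in_A R$ via the commuting square, use the ideal property of $R$, and come back through the universal property of the $2$-pullback. You simply isolate the intermediate step as an explicit biconditional $(u,v)\in_A S\iff(fu,gv)\in_A R$, which makes the proof read a little more cleanly.

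For the second claim your route genuinely differs. The paper proves $g^*Rf_*\subseteq g^\circ R f_\nt$ by an element chase: it unfolds $(u,v)\in_A g^*Rf_*$ via two applications of Lemma~\ref{generalised els in SR}, uses the ideal property of $R$ to obtain $(fubc,gvbc)\in_C R$, and then descends along the so-morphism $bc$ to get $(fu,gv)\in_A R$, from which membership in $g^\circ R f_\nt$ follows. Your argument bypasses all of this with the one-line algebraic computation $g^*Rf_*\cong g^\circ I_Y\,R\,I_X f_\nt\cong g^\circ R f_\nt$, using only Lemma~\ref{basic lemma for f*}(1) and the characterisation $R\cong I_YRI_X$ of ideals. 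This is shorter and more conceptual; the price is that it relies on associativity of composition in $\Relf(\C)$ (hence on regularity) and on having already internalised Lemma~\ref{basic lemma for f*}(1), whereas the paper's argument is self-contained at the level of generalised elements and makes the role of the ideal condition on $R$ visible at exactly the point where it is needed.
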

\begin{proof} By Lemma~\ref{pps for ff-ms}(3) $S$ is a relation, and it is easy to check that $S\cong g^\circ R f_\nt$ in $\Relf(\C)$. To see that it is an ideal, suppose we have morphisms $u,u'\colon A\to U$ and $v,v'\colon A\to V$ such that $(u,v)\in_A S$, $u'\moi u$ and $v\moi v'$. Then, there exists a morphism $\alpha\colon A\to S$ such that $\la s_1,s_2\ra \alpha = \la u,v\ra$. We get the factorisation $\la r_1,r_2\ra h \alpha = \la fu,gv\ra$, which shows that $(fu,gv)\in_A R$. Since $fu'\moi fu$ and $gv\moi gv'$, then $(fu',gv')\in_A R$, since $R$ is an ideal. Consequently, there exists a morphism $\beta\colon A\to R$ such that $\la r_1,r_2\ra \beta = \la fu',gv'\ra = f\times g \la u',v'\ra$. The universal property of the above $2$-pullback then gives a morphism $\gamma\colon A\to S$ such that $\la s_1,s_2\ra \gamma = \la u',v'\ra$. This proves that $(u',v')\in_A S$.

For the second statement, we use Lemma~\ref{basic lemma for f*}(1) and the fact that $R$ is an ideal to get $g^*Rf_* \cong g^\circ I_Y R I_X f_\nt \cong g^\circ R f_\nt$.
\end{proof}

\begin{example}\label{example of direct image of ideal which is not an ideal}
Let $\C$ be a regular $\Ord$-category. Consider an ideal $S\colon U\looparrowright V$ and morphisms $f\colon U\to X$, $g\colon V\to Y$. The (so-morphism, ff-monomorphism) factorisation of $f\times g \la s_1,s_2\ra$
\[
	\xymatrix{S  \ar@{>>}[r]^-h \ar@{ >->}[d]_-{\la s_1,s_2\ra} & R \ar@{ >->}[d]^-{\la r_1,r_2\ra} \\
		U\times V \ar[r]_-{f\times g} & X\times Y,}
\]
gives a relation $R\mono X\times Y$ which is not necessarily an ideal. In particular, when $U=V$ and $f=g$, such a factorisation gives the \emph{direct image} of $S$ under $f$. Consequently, the direct image of an ideal is not necessarily an ideal.

Consider the example where $S=I_U$, $f=1_U$ and $g\colon U\to Y$ is any morphism in $\C$
\[
	\xymatrix{I_U  \ar@{>>}[r]^-h \ar@{ >->}[d]_-{\la u_1,u_2\ra} & R\cong g_\nt I_U \ar@{ >->}[d]^-{\la r_1,r_2\ra} \\
		U\times U \ar[r]_-{1_U\times g} & U\times Y.}
\]
It is easy to see that $R\cong g_\nt I_U$. To have morphisms $u\colon A \to U$, $y\colon A\to Y$ such that $(u,y)\in_A g_\nt I_U$, means that there exist an so-morphism $b\colon B\repi A$ and a morphism $\overline{u}\colon B\to U$ such that
\[
\xymatrix@C=40pt{ B \ar@/^/[r]^-{ub} \ar@/_/[r]_-{\overline{u}} \ar@{}[r]|-{\rotatebox{-90}{$\moi$}} \ar@{>>}[d]_-b
	\ar@{}[dr]_(.6){\circlearrowright} & U \ar[d]^-g \\
	A \ar[r]_-y & Y;}
\]
here $g\overline{u}=yb$ (see Lemma~\ref{generalised els in ordinary SR}). If $g$ is an ff-monomorphism, then the  diagonal fill-in property gives a morphism $d\colon A\to U$ such that $y=gd$. We want to show that $g_\nt I_U$ is not an ideal. Given another morphism $y'\colon A\to Y$ such that $y\moi y'$, there is no reason why $y'$ should also factor through $g$. For example, take $g$ to be the ff-monomorphism $\la z_1,z_2\ra \colon I_Z\mono Z\times Z$, for some object $Z$, and $y=\la y_1,y_2\ra\colon A \to Z\times Z$. To have $y$ factor through $g$ implies that $(y_1,y_2)\in_A I_Z$, i.e. $y_1\moi y_2$. If $y'=\la y_1',y_2'\ra\colon A\to Z\times Z$ is another morphism such that $y\moi y'$, then $y_1\moi y_1'$ and $y_2\moi y_2'$. This does not imply that $y_1'\moi y_2'$, so that $y'$ may not factor through $g$.

A similar argument holds for $S=I_U$ and $f=g\colon U\to Y$. In that case $R\cong g_\nt I_U g^\circ$. To have $(u,y)\in_A g_\nt I_U g^\circ$ when $g$ is an ff-monomorphism still implies that $y$ factors through $g$. This shows that the direct image of an ideal is not necessarily an ideal.
\end{example}

\begin{theorem} \label{R = f_*}
Let $\C$ be a regular $\Ord$-category. If $R\colon X\looparrowright Y$ has a right adjoint $\overline{R}\colon Y\looparrowright X$ in $\Relw(\C)$, then there exists a unique morphism $f\colon X\to Y$ in $\C$ such that $R\cong f_*$.
\end{theorem}
\begin{proof} We partially follow the proof of ~\cite[Theorem 3.8.]{Vassilis}. Since $R$ has a right adjoint $\overline{R}$, then $I_X\subseteq \overline{R}R$ and $R\overline{R}\subseteq I_Y$. Consider the 2-pullback of $R$ and $(\overline{R})^\circ$
\[
	\xymatrix@R=30pt@C=40pt{S \pullback \ar@{ >->}[r]^-{\overline{u}} \ar@{ >->}[d]_-{u}  & \overline{R} \ar@{ >->}[d]^-{\la \overline{r}_2, \overline{r}_1\ra} \\ R \ar@{ >->}[r]_-{\la r_1,r_2\ra} & X\times Y.}
\]
Note that the relation $\la r_1,r_2\ra u\colon S \to X\times Y$ is the intersection of $R$ and $\overline{R}^\circ$, i.e. $S\cong R\cap \overline{R}^\circ$. 

We prove that $r_1u$ is an so-morphism. We have $(1_X,1_X)\in_X I_X\subseteq \overline{R}R$, so there exist an so-morphism $b\colon B\repi X$ and a morphism $y\colon B\to Y$ such that $(b,y)\in_B R$ and $(y,b)\in_B \overline{R}$ (see Lemma~\ref{generalised els in ordinary SR}). We get commutative diagrams
\[
\xymatrix@R=10pt{ & X \\
		B \ar@{>>}[ur]^-{b} \ar[dr]_-{y} \ar@{.>}[rr]^-{\alpha} & & R \ar[ul]_-{r_1} \ar[dl]^-{r_2}\\
		& Y} \hspace{20pt}
\xymatrix@R=10pt{ & Y \\
		A \ar[ur]^-{y} \ar@{>>}[dr]_-{b} \ar@{.>}[rr]^-{\overline{\alpha}} & & \overline{R}. \ar[ul]_-{\overline{r}_1} \ar[dl]^-{\overline{r}_2}\\
		& X}
\]
The universal property of the above pullback induces a unique morphism $\lambda:B\to S$ such that $u\lambda=\alpha$ and $\overline{u}\lambda =\overline{\alpha}$. Since $r_1 u\lambda=r_1\alpha=b$ is an so-morphism, then so is $r_1 u$ (see Lemma~\ref{pps for so-ms}(2)). 

From (R4) it follows that $r_1 u$ is the bicoinserter of the projections $(\pi_1,\pi_2)$ of its comma object (see Remark~\ref{Remark on (R4)})
\[
\xymatrix{C \ar[r]^-{\pi_2} \ar[d]_-{\pi_1} \ar@{}[dr]|-{\moi} & S \ar@{>>}[d]^-{r_1u} \\
					S \ar@{>>}[r]_-{r_1u} & X.}
\]
We have $\la r_1,r_2\ra u \pi_1 \moi \la r_1,r_2\ra u \pi_2$, since: \\
\begin{itemize}
	\item we already know that $r_1u\pi_1\moi r_1u\pi_2$;
	\item to prove that $r_2u\pi_1\moi r_2 u \pi_2$, we use the following argument. We have $(\overline{r}_1\overline{u}\pi_1, \overline{r}_2\overline{u}\pi_1)\in_C \overline{R}$, i.e. $(r_2u\pi_1, r_1u\pi_1)\in_C \overline{R}$. On the other hand, from $(r_1 u\pi_2,r_2u\pi_2)\in_C R$ and $r_1u \pi_1\moi r_1u\pi_2$ we get $(r_1u\pi_1,r_2u\pi_2)\in_C R$, since $R$ is an ideal. This gives $(r_2u\pi_1,r_2u\pi_2)\in_C R\overline{R}\subseteq I_Y$; consequently, $r_2u\pi_1\moi r_2u\pi_2$ (see Example~\ref{any f/g is a relation}2.).
\end{itemize}
	
Since $r_1 u$ is the bicoinserter of $(\pi_1,\pi_2)$, there exists a morphism $t$ in the diagram
\[
\vcenter{\xymatrix@C=20pt@R=10pt{ C \ar@<2pt>[r]^-{\pi_1} \ar@<-2pt>[r]_-{\pi_2} & S \ar@{ >->}[dr]_-u \ar@{>>}[rr]^-{r_1 u} & & X \ar[dd]^-{t=\la i,f\ra} \ar@{}[dl]|{\cong}  \\
	& & R \ar@{ >->}[dr]_-{\la r_1,r_2\ra} \\ 
	& & & X\times Y}}
\]
such that $\la r_1,r_2\ra u \cong tr_1 u$. We deduce that $r_2u\moi fr_1u$ and $fr_1u\moi r_2u$. From $fr_1u\moi r_2u$, we conclude that $(r_1u,r_2u)\in_S f_*$ by \eqref{in f/g}; this proves that $S\subseteq f_*$.

To finish, we prove that $R\cong f_*$. On one hand we have 
\[	
\begin{array}{rl}
f_* & \cong  f_*I_X \stackrel{\text{Lemma}~\ref{lemma on g/f, ff-ms and so-ms}(4)}{\cong} f_*(r_1 u)_* (r_1 u)^* \stackrel{\text{Lemma}~\ref{basic lemma for f*}(3)}{\cong} (fr_1 u)_*(r_1 u)^*  \\
	& \stackrel{\text{Lemma}~\ref{basic lemma for f*}(5)}{\subseteq} (r_2 u)_*(r_1 u)^* \cong S_* \stackrel{\text{Lemma}~\ref{R<=S implies R*<=S*}}{\subseteq} R_*\cong R.
\end{array}
\]

On the other hand we use $S\subseteq f_*$ to prove
\[
	\Delta_X\cong I_X\cap \Delta_X \subseteq \overline{R}R\cap \Delta_X  \stackrel{\eqref{Freyd1}}{\subseteq}  \overline{R}(R\cap \overline{R}^{\circ}) \cong \overline{R}S\subseteq \overline{R}f_*.
\]
We then deduce that 
\[
	R\cong R\Delta_X\subseteq R\overline{R}f_* \subseteq I_Yf_* \cong f_*.
\]
\end{proof}

\section{Mal'tsev property in the \texorpdfstring{$\Ord$}{Ord}-enriched context}\label{Mal'tsev property in the Ord-enriched context}

Recall from~\cite{CLP,CPP,CKP} that a Mal'tsev category $\Ccal$ is defined through the property that every ordinary reflexive relation $R\colon X\relto X$ in $\Ccal$ is an equivalence relation. The original definition asks for the base category $\Ccal$ to be regular or just finitely complete. However, this property on ordinary reflexive relations can be stated through generalised elements without any assumption on $\Ccal$, as in Definition~\ref{def of difunctional}; see also \eqref{picture of difunctional}. Actually, the difunctionality of ordinary relations is another equivalent way to define a Mal'tsev category.

\begin{definition}[see~\cite{CLP,CPP,CKP}]\label{Mal'tsev cat with no lims} A category $\Ccal$ is called a \defn{Mal'tsev category} when every ordinary relation $D\colon X\relto Y$ is difunctional.
\end{definition}

We recall Theorem 3.6 from~\cite{CKP}, which gives several well-known characterisations of a regular Mal'tsev category.

\begin{theorem}\label{from CKP} Let $\Ccal$ be a regular category. The following statements are equivalent, and characterise regular Mal'tsev categories:
\begin{itemize}
	\item[(i)] for any ordinary equivalence relations $R,S\colon X\relto X$ on an object $X$, $RS$ is an ordinary equivalence relation on $X$;
	\item[(ii)] $RS\cong SR$, for any ordinary equivalence relations $R,S\colon X\relto X$ on an object $X$;
	\item[(iii)] $RS\cong SR$ for any ordinary effective equivalence relations (i.e. kernel pairs of some morphism in $\Ccal$) $R,S\colon X\relto X$;
	\item[(iv)] every ordinary relation $D\colon X\relto Y$ is difunctional, i.e. $DD^\circ D\cong D$;
	\item[(v)] every ordinary reflexive relation $R\colon X \relto X$ on an object $X$ is an ordinary equivalence relation;
	\item[(vi)] every ordinary reflexive relation $R\colon X \relto X$ on an object $X$ is transitive;
	\item[(vii)] every ordinary reflexive relation $R\colon X \relto X$ on an object $X$ is symmetric.
\end{itemize}
\end{theorem}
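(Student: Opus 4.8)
The plan is to establish all seven statements as equivalent through a single cycle of implications, carried out entirely inside the bicategory $\Rel(\Ccal)$ of ordinary relations and its calculus recalled above. Most of the links are routine manipulations, so I would first dispatch $(i)\Leftrightarrow(ii)$, then $(ii)\To(iii)$, $(iv)\To(v)$ and $(v)\To(vi),(vii)$, and finally close the loop by showing $(vi)\To(ii)$ and $(vii)\To(ii)$. The only genuinely substantial link is $(iii)\To(iv)$, which is where the content sits.

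For the easy part: given ordinary equivalence relations $R,S$, the composite $RS$ is always reflexive and satisfies $(RS)^\circ=S^\circ R^\circ\cong SR$ by symmetry of $R$ and $S$. Hence if $RS$ is an equivalence relation it is symmetric, giving $RS\cong SR$; conversely $RS\cong SR$ forces symmetry of $RS$, reflexivity is automatic, and transitivity follows from $(RS)(RS)=R(SR)S\cong R(RS)S=(RR)(SS)\subseteq RS$, which proves $(i)\Leftrightarrow(ii)$. The step $(ii)\To(iii)$ is immediate, since effective equivalence relations are in particular equivalence relations. For $(iv)\To(v)$, let $R$ be reflexive, so that $\Delta_X\subseteq R$ and $\Delta_X\subseteq R^\circ$; difunctionality $RR^\circ R\cong R$ (Remark~\ref{difunctional in a regular cat}) then yields symmetry via $R^\circ=\Delta_X R^\circ\Delta_X\subseteq RR^\circ R\cong R$ and transitivity via $RR=R\Delta_X R\subseteq RR^\circ R\cong R$, so $R$ is an equivalence relation. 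The implications $(v)\To(vi)$ and $(v)\To(vii)$ are trivial.

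To close the cycle I would route $(vi)$ and $(vii)$ back to $(ii)$. Given equivalence relations $R,S$, the composite $RS$ is reflexive, so $R\subseteq RS$ and $S\subseteq RS$. Assuming $(vi)$, the reflexive relation $RS$ is transitive, whence $SR\subseteq(RS)(RS)\subseteq RS$, and the symmetric argument gives $RS\subseteq SR$, so $RS\cong SR$. Assuming instead $(vii)$, the reflexive relation $RS$ is symmetric, so $RS\cong(RS)^\circ=S^\circ R^\circ\cong SR$. In either case we reach $(ii)$, and since $(ii)\To(iii)\To(iv)\To(v)\To(vi),(vii)$ and $(i)\Leftrightarrow(ii)$, all seven statements become equivalent.

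The main step, and the one I expect to be the real obstacle, is $(iii)\To(iv)$. Given an ordinary relation $\langle d_1,d_2\rangle\colon D\mono X\times Y$, I would use Remark~\ref{R=r_2r_1^op} to write $D\cong(d_2)_\nt(d_1)^\circ$ and $D^\circ\cong(d_1)_\nt(d_2)^\circ$, and recall that the kernel pairs $(d_1)^\circ(d_1)_\nt$ and $(d_2)^\circ(d_2)_\nt$ are effective equivalence relations on the object $D$. A direct regrouping then gives $DD^\circ D\cong(d_2)_\nt\,\bigl[(d_1)^\circ(d_1)_\nt\bigr]\bigl[(d_2)^\circ(d_2)_\nt\bigr]\,(d_1)^\circ$; applying the commutativity $(iii)$ of these two kernel pairs and then the map identities $f_\nt f^\circ f_\nt\cong f_\nt$ and $f^\circ f_\nt f^\circ\cong f^\circ$ (Remark~\ref{difunctional in a regular cat}) collapses the middle to $(d_2)_\nt(d_2)^\circ(d_2)_\nt(d_1)^\circ(d_1)_\nt(d_1)^\circ\cong(d_2)_\nt(d_1)^\circ\cong D$. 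The crux is recognising $D$ as the composite $(d_2)_\nt(d_1)^\circ$ and feeding in the kernel pairs of its two legs as the pair of effective equivalence relations to which $(iii)$ is applied; once this bridge is in place the remainder is bookkeeping with the adjunction identities of $\Rel(\Ccal)$.
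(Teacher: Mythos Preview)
Your proof is correct. The paper does not actually prove this theorem: it is recalled verbatim from~\cite{CKP} as Theorem~\ref{from CKP}, with no argument supplied. However, the paper \emph{does} prove the enriched analogue (Theorem~\ref{Ord-enriched for CKP}), and your argument is essentially the ordinary-relation version of that proof: the crucial link $(iii)\Rightarrow(iv)$ is handled identically, by writing $D\cong (d_2)_\nt(d_1)^\circ$, recognising $DD^\circ D$ as $(d_2)_\nt\bigl[(d_1)^\circ(d_1)_\nt\bigr]\bigl[(d_2)^\circ(d_2)_\nt\bigr](d_1)^\circ$, commuting the two kernel pairs, and collapsing via $f_\nt f^\circ f_\nt\cong f_\nt$, $f^\circ f_\nt f^\circ\cong f^\circ$. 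The only structural difference is that the enriched theorem has no symmetry statement (no analogue of $(vii)$), so the paper's cycle is shorter; your additional links $(vi)\Rightarrow(ii)$ and $(vii)\Rightarrow(ii)$ are the standard closure and are fine.
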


\begin{definition}\label{Ord-Mal'tsev} An $\Ord$-category $\C$ is called an \defn{$\Ord$-Mal'tsev category} when every ideal $D\colon X\looparrowright Y$ satisfies the property: given morphisms $x, u, u'\colon A\to X$, $y,y',v\colon A\to Y$ such that $(x,y)\in_A D$, $y\moi y'$, $(u,y')\in_A D$, $u\moi u'$ and $(u',v)\in_A D$, then $(x,v)\in_A D$. This property may be pictured as
\begin{equation}\label{picture Ord-difunctional}
	\begin{array}{ccc}
	 x & D & y \vspace{-10pt}\\
	   &   & \rotatebox{-90}{$\moi$} \\
	 u & D & y' \vspace{-10pt}\\
	\rotatebox{-90}{$\moi$} \\
	 u' & D & v \\
	\hline
	x & D & v.
	\end{array}
\end{equation}
\end{definition}

\begin{proposition}\label{Mal'tsev => 2-Mal'tsev} Let $\Ccal$ be a Mal'tsev category. Then any $\Ord$-enrichment of $\Ccal$ is an $\Ord$-Mal'tsev category.
\end{proposition}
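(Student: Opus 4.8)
The plan is to deduce the $\Ord$-difunctionality of an arbitrary ideal $D\colon X\looparrowright Y$ in $\C$ from the ordinary difunctionality of $D$ in the underlying Mal'tsev category $\Ccal$, using the compatibility condition \eqref{w-c def} to absorb the two order-comparisons into the memberships. First I would fix an ideal $D\colon X\looparrowright Y$ together with generalised elements $x,u,u'\colon A\to X$ and $y,y',v\colon A\to Y$ satisfying the hypotheses of Definition~\ref{Ord-Mal'tsev}, namely $(x,y)\in_A D$, $y\moi y'$, $(u,y')\in_A D$, $u\moi u'$ and $(u',v)\in_A D$. Since $D$ is an ideal, applying \eqref{w-c def} to $(x,y)\in_A D$ with $x\moi x$ and $y\moi y'$ yields $(x,y')\in_A D$; likewise, applying \eqref{w-c def} to $(u',v)\in_A D$ with $u\moi u'$ and $v\moi v$ yields $(u,v)\in_A D$. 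At this point the order has been eliminated, and the data $(x,y')\in_A D$, $(u,y')\in_A D$, $(u,v)\in_A D$ is exactly the ordinary difunctionality configuration \eqref{picture of difunctional}, with $y'$ playing the role of $y$.

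Next I would invoke the hypothesis that $\Ccal$ is a Mal'tsev category, so that every ordinary relation of $\Ccal$ is difunctional (Definition~\ref{Mal'tsev cat with no lims}). Applying this to the relation $D$ and the three memberships just obtained produces $(x,v)\in_A D$, which is precisely the conclusion required by Definition~\ref{Ord-Mal'tsev}. As the ideal $D$ and the whole configuration were arbitrary, this would show that $\C$ is an $\Ord$-Mal'tsev category.

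The delicate point, and where I expect the main obstacle, is this last invocation. Difunctionality in Definition~\ref{Mal'tsev cat with no lims} is stated for an ordinary relation, i.e. a jointly monomorphic span, whereas the underlying span of an ideal is only jointly \emph{ff}-monomorphic and need not be jointly monomorphic in $\Ccal$ --- this is exactly the gap between ff-monomorphisms and monomorphisms noted after Lemma~\ref{pps for ff-ms}, which closes only in the $\Pos$-case where $\moi$ is antisymmetric. To bridge it I would stress that difunctionality is a property of the generalised-element membership relation alone: the membership $(x,y)\in_A D$ is defined by the same factorisation \eqref{in R} whether $D$ is read in $\C$ or in $\Ccal$, and the conclusion $(x,v)\in_A D$ follows as soon as the membership relation of the span $D$ is difunctional. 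Thus the real task is to check that difunctionality passes from jointly monomorphic spans to the span underlying an ideal.

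Concretely, I would argue at the level of generalised elements rather than of jointly monomorphic subobjects. Writing $R_1,R_2$ for the kernel-pair equivalence relations of the two legs $d_1,d_2$ of $D$, the reduced data above says precisely $t_1\mathrel{R_2}t_2$ and $t_2\mathrel{R_1}t_3$ for witnessing elements $t_1,t_2,t_3\colon A\to D$, while producing $(x,v)\in_A D$ amounts to finding $t$ with $t_1\mathrel{R_1}t\mathrel{R_2}t_3$; the Mal'tsev condition delivers this through $2$-permutability $R_1R_2\cong R_2R_1$ (Theorem~\ref{from CKP}) in the regular setting, and through the purely membership-based difunctionality of relations otherwise. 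I expect the only genuine work in the proof to lie in recording that the membership relation of a merely jointly ff-monomorphic span is difunctional once all ordinary relations are, the order-theoretic reduction of the first paragraph being entirely formal.
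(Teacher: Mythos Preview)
Your first paragraph is exactly the paper's proof: use the ideal property \eqref{w-c def} to absorb the two $\moi$'s, obtaining $xDy'$, $uDy'$, $uDv$, and then apply ordinary difunctionality to conclude $xDv$. Nothing more is needed.

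The ``delicate point'' you raise in the last two paragraphs is not a genuine gap but a misreading of the definitions. An ideal is by definition a \emph{relation} in the sense of Section~\ref{Relations in the Ord-enriched context}, i.e.\ a span $(d_1,d_2)$ that is jointly \emph{ff-monomorphic}; with binary 2-products this means $\langle d_1,d_2\rangle\colon D\rightarrowtail X\times Y$ is an ff-monomorphism, and an ff-monomorphism is in particular a monomorphism (the passage you cite after Lemma~\ref{pps for ff-ms} distinguishes ff-\emph{morphisms} from ff-\emph{mono}morphisms; only the former can fail to be monic). Hence the underlying span of an ideal is already an ordinary relation in $\Ccal$, and Definition~\ref{Mal'tsev cat with no lims} applies directly---this is precisely why the paper's proof is one line. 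Your kernel-pair/2-permutability workaround is correct but superfluous, and moreover relies on regularity (Theorem~\ref{from CKP}), which the statement does not assume.
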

\begin{proof} Any relation $D\mono X\times Y$ is difunctional. If $D$ is an ideal, with the relations given in the top part of \eqref{picture Ord-difunctional}, we get $xDy'$ and $uDv$. Then
\[
	\begin{array}{ccc}
	 x & D & y'\\
	 u & D & y' \\
	 u & D & v \\
	\hline
	x & D & v
	\end{array}
\]
from the difunctionality of $D$.
\end{proof}

If $\C$ is a regular $\Ord$-category (so that we can compose relations in $\C$) then the property expressed in \eqref{picture Ord-difunctional} has a similar interpretation as $DD^\circ D\cong D$, in the 1-dimensional regular context.

\begin{proposition}\label{picture Ord-difunctional <=> DD*D=D} Let $\C$ be a regular $\Ord$-category and consider an ideal $\la d_1,d_2\ra \colon D\mono X\times Y$ in $\C$. Then $D$ satisfies \eqref{picture Ord-difunctional} if and only if $D\cong DD^*D\cong DD^\circ D$.
\end{proposition}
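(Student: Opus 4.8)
The plan is to split the equivalence into a formal part and a computational part. For the formal part, observe that $DD^*D\cong DD^\circ D$ is exactly Proposition~\ref{DD^op D is w-c} applied with $R=S=T=D$, and that the inclusion $D\subseteq DD^\circ D$ always holds in the tabular allegory $\Relf(\C)$ (a standard allegory identity, obtainable from $D\cong(d_2)_\nt(d_1)^\circ$ together with the units of the adjunctions $(d_i)_\nt\dashv(d_i)^\circ$). Hence the chain $D\cong DD^*D\cong DD^\circ D$ is equivalent to the single inclusion $DD^\circ D\subseteq D$, and it suffices to prove that $D$ satisfies \eqref{picture Ord-difunctional} if and only if $DD^\circ D\subseteq D$.

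Next I would use that $D$ is an ideal to absorb the order relations appearing in \eqref{picture Ord-difunctional}. From the defining implication \eqref{w-c def}, the hypotheses $(x,y)\in_A D$ and $y\moi y'$ yield $(x,y')\in_A D$, while $u\moi u'$ and $(u',v)\in_A D$ yield $(u,v)\in_A D$; conversely the case $y=y'$, $u=u'$ shows the order-free data are a special case. Thus, for an ideal $D$, property \eqref{picture Ord-difunctional} is equivalent to the plain difunctionality implication
\[
 \big[\,(x,y')\in_A D \;\wedge\; (u,y')\in_A D \;\wedge\; (u,v)\in_A D\,\big] \;\Rightarrow\; (x,v)\in_A D
\]
for all $x,u\colon A\to X$ and $y',v\colon A\to Y$.

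It then remains to match this plain implication with $DD^\circ D\subseteq D$. First I would handle the easy direction: given such data, $(x,v)\in_A DD^\circ D$ holds directly, since Lemma~\ref{generalised els in SR} applied with the identity so-morphism $1_A$ and intermediate elements $y'$ and $u$ witnesses first $(x,u)\in_A D^\circ D$ and then $(x,v)\in_A D(D^\circ D)$; so $DD^\circ D\subseteq D$ gives $(x,v)\in_A D$. For the converse I would unfold $(x,v)\in_A DD^\circ D$ by applying Lemma~\ref{generalised els in SR} twice: once to the composite $D(D^\circ D)$, producing an so-morphism $b_1\colon B_1\repi A$ and $w\colon B_1\to X$ with $(xb_1,w)\in_{B_1}D^\circ D$ and $(w,vb_1)\in_{B_1}D$; and once to $D^\circ D$, producing $b_2\colon B_2\repi B_1$ and $t\colon B_2\to Y$ with $(xb_1b_2,t)\in_{B_2}D$ and $(wb_2,t)\in_{B_2}D$ (the latter from $(t,wb_2)\in_{B_2}D^\circ$). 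Writing $b=b_1b_2$ (an so-morphism by Lemma~\ref{pps for so-ms}(1)) and restricting all data to the common stage $B_2$, the three memberships $(xb,t)$, $(wb_2,t)$, $(wb_2,vb)$ in $D$ are exactly the hypotheses of the plain implication at stage $B_2$, which yields $(xb,vb)\in_{B_2}D$; since $b$ is an so-morphism and $\la d_1,d_2\ra$ is an ff-monomorphism, the diagonal fill-in then gives $(x,v)\in_A D$, i.e. $DD^\circ D\subseteq D$.

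The conceptual moves are short, so the hard part will be the bookkeeping in the converse: the double application of Lemma~\ref{generalised els in SR} produces two stages $B_1\repi A$ and $B_2\repi B_1$, and one must restrict the several generalised elements to the single common stage $B_2$ along the composite so-morphism and check that the resulting three memberships line up precisely with the plain difunctionality hypotheses. This pulling-back-to-a-common-stage, though routine, is where care is needed.
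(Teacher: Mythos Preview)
Your argument is correct. The overall structure matches the paper's: both use Proposition~\ref{DD^op D is w-c} for $DD^*D\cong DD^\circ D$, both note $D\subseteq DD^\circ D$ always, and both unfold the composite via Lemma~\ref{generalised els in SR} and descend along the resulting so-morphism. The one organisational difference is that you first observe, using the ideal property \eqref{w-c def}, that \eqref{picture Ord-difunctional} is equivalent (for an ideal) to plain difunctionality at every stage, and then work entirely with $DD^\circ D$. The paper instead unfolds $DD^*D$ directly and extracts the order relations $\overline{y}w\moi d_2z$, $d_1z\moi \overline{x}cw$ from a further decomposition of $D^*=(d_1)_*(d_2)^*$, feeding these orders straight into \eqref{picture Ord-difunctional}. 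Your route saves one layer of unfolding (no need to open up $D^*$) at the cost of the preliminary reduction; the paper's route applies the $\Ord$-enriched hypothesis as stated but needs three so-morphisms rather than two. Either way the bookkeeping is exactly the ``pull back to a common stage and use the diagonal fill-in'' step you identified.
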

\begin{proof} We have $DD^* D\cong DD^\circ D$ from Proposition~\ref{DD^op D is w-c}. Suppose now that $D$ satisfies \eqref{picture Ord-difunctional}. We always have $D\subseteq DD^\circ D \subseteq DD^*D$, by Lemma~\ref{R^* smallest ideal contains R^op}. Next, consider $x\colon A\to X$ and $y\colon A\to Y$ such that $(x,y)\in_A DD^*D$. Applying Lemma~\ref{generalised els in SR} twice, there exist so-morphisms $b\colon B \repi A$ and $c\colon C\repi B$, and morphisms $\overline{x}\colon B\to X$, $\overline{y}\colon C\to Y$ such that $(xbc,\overline{y})\in_C D$, $(\overline{y}, \overline{x}c)\in_C D^*$ and $(\overline{x}c, ybc)\in_C D$. Since $D^*=(d_1)_*(d_2)^*$, then there exists an so-morphism $w\colon W\repi C$ and a morphism $z\colon W\to D$ such that $(\overline{y}w,z)\in_W (d_2)^*$ and $(z,\overline{x}cw)\in_W (d_1)_*$, again by Lemma~\ref{generalised els in SR}. So, $\overline{y}w\moi d_2 z$ and $d_1 z\moi \overline{x}cw$ by \eqref{in f/g}. By assumption, we have
\[
	\begin{array}{ccc}
	 xbcw & D & \overline{y}w \vspace{-10pt}\\
	   &   & \rotatebox{-90}{$\moi$} \\
	 d_1 z & D & d_2 z \vspace{-10pt}\\
	\rotatebox{-90}{$\moi$} \\
	 \overline{x}cw & D & ybcw \\
	\hline
	xbcw & D & ybcw.
	\end{array}
\]
Using the fact that $bcw$ is an so-morphism (Lemma~\ref{pps for so-ms}), we conclude that $(x,y)\in_A D$.

For the converse, suppose we have $DD^*D\subseteq D$. Consider generalised elements related as in \eqref{picture Ord-difunctional}. Using the fact that $D$ is an ideal and Lemma~\ref{R^* smallest ideal contains R^op}, we have
\[
\begin{array}{lcl}
	( (x,y)\in_A D \wedge y\moi y' ) & \Rightarrow & (x,y')\in_A D;\\
	(y',u)\in_A D^\circ \subseteq D^* & \Rightarrow & (y',u)\in_A D^*; \\
	( u\moi u' \wedge (u',v)\in_A D ) & \Rightarrow & (u,v)\in_A D.
\end{array}
\]
So, $(x,v)\in_A DD^*D\subseteq D$; thus $(x,v)\in_A D$.
\end{proof}

Let $\C$ be a regular $\Ord$-category. Next we show that the $\Ord$-enriched version of Theorem~\ref{from CKP} holds, thus giving several characterisations for regular $\Ord$-Mal'tsev categories. To do so we must study the enriched counterpart of an ordinary (effective) equivalence relation.

\begin{definition}[\cite{KV}]\label{congruence} Let $\C$ be an $\Ord$-category. An ideal $R\colon X\looparrowright X$ on an object $X$ which is reflexive and transitive is called a \defn{congruence} on $X$. A congruence $R$ is called \defn{effective} when $R\cong f/f\cong f^*f_*$, for some morphism $f\colon X\to Y$ in $\C$.
\end{definition}

\begin{lemma}[see~\cite{Vassilis}]\label{E reflexive <=> I_X subseteq E}
Let $\C$ be an $\Ord$-category. An ideal $R\colon X\looparrowright X$ is reflexive if and only if $I_X\subseteq R$. Consequently, $R$ is a congruence if and only if $I_X\subseteq R$ and $R$ is transitive.
\end{lemma}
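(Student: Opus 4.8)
The plan is to prove the equivalence

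\[
	R \text{ is reflexive} \quad\Longleftrightarrow\quad I_X\subseteq R
\]

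by unpacking both conditions in terms of generalised elements and the characterisations already established for ideals. The key observation is that an ideal is reflexive exactly when the diagonal is ``contained'' in it in the appropriate enriched sense, and that $I_X = 1_X/1_X$ is precisely the comma object encoding the order relation $x\moi x'$ (see Example~\ref{any f/g is a relation}(2)).

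First I would address the implication $I_X\subseteq R \Rightarrow R$ reflexive. By the definition of reflexivity for an ideal, I must produce the element relation $(1_X,1_X)\in_X R$, i.e. a factorisation witnessing $1_X\, R\, 1_X$. Since $I_X=1_X/1_X$ has projections $x_1,x_2\colon I_X\to X$ with $x_1\moi x_2$, the inclusion $I_X\subseteq R$ gives $(x_1,x_2)\in_{I_X} R$. The difficulty is to descend from the generalised element $I_X$ back to $X$ itself. Here I would invoke that $x_1$ (equivalently $x_2$) is a split epimorphism: the canonical morphism $\la 1_X,1_X\ra\colon X\to I_X$ induced by $1_X\moi 1_X$ splits both projections, so precomposing $(x_1,x_2)\in_{I_X} R$ with this section yields $(1_X,1_X)\in_X R$, which is exactly reflexivity.

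For the converse $R$ reflexive $\Rightarrow I_X\subseteq R$, I would start from $(1_X,1_X)\in_X R$ and use the fact that $R$ is an \emph{ideal}, i.e. satisfies the compatibility property \eqref{w-c def}. Taking generalised elements $x_1,x_2\colon I_X\to X$ with $x_1\moi x_2$ (the two projections of $I_X$), I first precompose the reflexivity witness with $x_1$ to obtain $(x_1,x_1)\in_{I_X} R$. Then, since $R$ is an ideal and $x_1\moi x_2$, the implication $\left(x_1\moi x_1 \wedge (x_1,x_1)\in_{I_X} R \wedge x_1\moi x_2\right)\Rightarrow (x_1,x_2)\in_{I_X} R$ gives $(x_1,x_2)\in_{I_X} R$, which is precisely the statement $I_X\subseteq R$ (recall $(x_1,x_2)$ are the defining projections of $I_X$). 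This is the step where the ideal hypothesis is essential: for an arbitrary relation reflexivity would only give the diagonal, not the full comma object $I_X$.

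The main obstacle, and the conceptual heart of the argument, is matching the two different ``sizes'' of the diagonal: the strict diagonal $\Delta_X$ (coming from $1_X\,R\,1_X$) versus the enriched diagonal $I_X$ (which records all pairs $x\moi x'$). The ideal condition is exactly what bridges this gap by allowing one to weaken along $\moi$ on both sides. The final sentence of the lemma is then immediate: combining this equivalence with the definition of congruence (reflexive and transitive) from Definition~\ref{congruence} shows that $R$ is a congruence if and only if $I_X\subseteq R$ and $R$ is transitive.
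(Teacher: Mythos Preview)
Your proof is correct. The paper argues via the calculus of relations in $\Relf(\C)$: for the direction reflexive $\Rightarrow I_X\subseteq R$ it writes $I_X\cong I_X\Delta_X\subseteq I_XR\cong R$ (the last isomorphism using that $R$ is an ideal), and for the converse simply $\Delta_X\subseteq I_X\subseteq R$. Your argument unpacks the same idea elementwise, applying the ideal property~\eqref{w-c def} directly to pass from $(x_1,x_1)\in_{I_X}R$ to $(x_1,x_2)\in_{I_X}R$, rather than encoding this step as the identity $I_XR\cong R$. The payoff of your route is that it never forms a composite of relations, so it genuinely only needs $\C$ to be an $\Ord$-category admitting the comma object $I_X$, in line with the lemma's hypothesis; the paper's one-line argument tacitly uses that composition of relations is available. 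Conversely, the paper's version is slicker once the relational machinery has been set up.
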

\begin{proof}
If $R$ is reflexive, then $\Delta_X\subseteq R$. Hence, $I_X\cong I_X\Delta_X\subseteq I_XR\cong R$. For the converse, $\Delta_X\subseteq I_X\subseteq R$; thus $R$ is reflexive (see Remark~\ref{refl, symm, trans in a regular cat}).
\end{proof}

\begin{remark}\label{remark about congs not being symmetric} The notion of a congruence $R\colon X\looparrowright X$ does not involve any sort of symmetry for $R$. Symmetry of $R$ would mean that $R^\circ \cong R$, which is generally false for ideals (see Lemma~\ref{R^* smallest ideal contains R^op}). As a consequence, the $\Ord$-enriched version of Theorem~\ref{from CKP} does not include the statement (vii); also statements (v) and (vi) coincide. Actually, the symmetry of a reflexive and transitive ordinary relation comes for free when the base category is \emph{$n$-permutable}~\cite{M-FRVdL} (see~\cite{HM, CKP} for the definitions of $n$-permutable variety and $n$-permutable category). This is the case of Mal'tsev categories, which are $2$-permutable categories.
\end{remark}

\begin{theorem}\label{Ord-enriched for CKP} Let $\C$ be a regular $\Ord$-category. Then the following statements are equivalent and characterise regular $\Ord$-Mal'tsev categories:
\begin{itemize}
	\item[(i)] for any congruences $R,S\colon X\looparrowright X$ on an object $X$, $RS$ is a congruence on $X$;
	\item[(ii)] $RS\cong SR$, for any congruences $R,S\colon X\looparrowright X$ on an object $X$;
	\item[(iii)] $RS\cong SR$, for any effective congruences $R,S\colon X\looparrowright X$ on an object $X$;
	\item[(iv)] every ideal $X\stackrel{d_1}{\longleftarrow} D \stackrel{d_2}{\longrightarrow} Y$ is such that $DD^* D\cong D$;
	\item[(v)] every reflexive ideal $R\colon X\looparrowright X$ on an object $X$ is a congruence.
\end{itemize}
\end{theorem}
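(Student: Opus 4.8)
The plan is to prove the theorem by establishing the cyclic chain of implications
$(\mathrm{iv})\To(\mathrm{v})\To(\mathrm{i})\To(\mathrm{ii})\To(\mathrm{iii})\To(\mathrm{iv})$, working throughout inside the allegory $\Relw(\C)$ and exploiting the composition calculus of ideals together with the collapse identities of Lemma~\ref{basic lemma for f*}. I would first observe that $(\mathrm{iv})$ is, by Definition~\ref{Ord-Mal'tsev} combined with Proposition~\ref{picture Ord-difunctional <=> DD*D=D}, exactly the statement that $\C$ is a regular $\Ord$-Mal'tsev category; so closing the cycle simultaneously yields the equivalences and the asserted characterisation.

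For $(\mathrm{iv})\To(\mathrm{v})$ I would take a reflexive ideal $R\colon X\looparrowright X$ and first show $I_X\subseteq R^*$. Reflexivity gives $(1_X,1_X)\in_X R$, hence $(1_X,1_X)\in_X R^\circ\subseteq R^*$ by Corollary~\ref{R^* smallest ideal contains R^op}; precomposing with an arbitrary $x\colon A\to X$ and then using that $R^*$ is an ideal upgrades this to $(x,x')\in_A R^*$ whenever $x\moi x'$, i.e. $I_X\subseteq R^*$. Then $RR\cong R\,I_X\,R\subseteq R\,R^*\,R\cong R$ by $(\mathrm{iv})$, so $R$ is transitive, hence a congruence (Lemma~\ref{E reflexive <=> I_X subseteq E}). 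For $(\mathrm{v})\To(\mathrm{i})$ the argument is short: if $R,S$ are congruences then $RS$ is an ideal with $I_X\cong I_XI_X\subseteq RS$, so $RS$ is a reflexive ideal and $(\mathrm{v})$ makes it a congruence.

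For $(\mathrm{i})\To(\mathrm{ii})$ I would use reflexivity to get $R=R\,I_X\subseteq RS$ and $S=I_X\,S\subseteq RS$, whence $SR\subseteq(RS)(RS)$; since $(\mathrm{i})$ makes $RS$ a congruence it is transitive, giving $SR\subseteq RS$, and the symmetric argument (applying $(\mathrm{i})$ to the pair $(S,R)$) gives $RS\subseteq SR$. This is precisely the point where the classical proof invokes symmetry of the composite; I avoid it, since congruences here need not be symmetric (Remark~\ref{remark about congs not being symmetric}). The implication $(\mathrm{ii})\To(\mathrm{iii})$ is immediate, as effective congruences are in particular congruences.

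The main obstacle is $(\mathrm{iii})\To(\mathrm{iv})$, which I would settle by a purely relational computation. Given an ideal $\la d_1,d_2\ra\colon D\mono X\times Y$, set $R=(d_1)^*(d_1)_*\cong d_1/d_1$ and $S=(d_2)^*(d_2)_*\cong d_2/d_2$; these are effective congruences on the tabulation object $D$ (Lemma~\ref{lemma on g/f, ff-ms and so-ms}(1), Definition~\ref{congruence}). Using $D\cong(d_2)_*(d_1)^*$ from Proposition~\ref{R=(r_2)_*(r_1)^*} and $D^*=(d_1)_*(d_2)^*$, the composite rewrites as $DD^*D\cong(d_2)_*\,R\,S\,(d_1)^*$. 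Applying $(\mathrm{iii})$ on $D$ to permute $RS\cong SR$, and then absorbing the duplicated factors via $(d_2)_*(d_2)^*(d_2)_*\cong(d_2)_*$ and $(d_1)^*(d_1)_*(d_1)^*\cong(d_1)^*$ (Lemma~\ref{basic lemma for f*}(8)), gives $(d_2)_*\,S\,R\,(d_1)^*\cong(d_2)_*(d_1)^*\cong D$. Since $D\subseteq DD^*D$ always holds (Proposition~\ref{picture Ord-difunctional <=> DD*D=D}), we conclude $DD^*D\cong D$. The delicate part is recognising the two factored effective congruences and seeing that $(\mathrm{iii})$, used on $D$, is exactly what permutes them, after which the surrounding maps $(d_2)_*$ and $(d_1)^*$ reabsorb the redundant factors.
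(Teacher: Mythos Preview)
Your proof is correct and follows the same cyclic scheme as the paper, with identical arguments for $(\mathrm{iv})\Rightarrow(\mathrm{v})$, $(\mathrm{v})\Rightarrow(\mathrm{i})$, $(\mathrm{ii})\Rightarrow(\mathrm{iii})$, and $(\mathrm{iii})\Rightarrow(\mathrm{iv})$. The one genuine difference is the step $(\mathrm{i})\Rightarrow(\mathrm{ii})$. The paper argues by rewriting the transitivity inclusion $RSRS\subseteq RS$ as $(r_2)_*\bigl[(r_1)^*SR(s_2)_*\bigr](s_1)^*\subseteq RS$, then applying the adjunction-style transfer of Proposition~\ref{pps on w-c relations}(5) and collapsing via the reflexivity sections $e_R,e_S$ to extract $SR\subseteq RS$. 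Your route is more direct: from $I_X\subseteq R$ and $I_X\subseteq S$ you obtain $S\subseteq RS$ and $R\subseteq RS$, whence $SR\subseteq (RS)(RS)\subseteq RS$ by the transitivity of $RS$ granted by $(\mathrm{i})$; the reverse inclusion comes from applying $(\mathrm{i})$ to the pair $(S,R)$. This avoids Proposition~\ref{pps on w-c relations}(5) and the explicit use of the tabulation morphisms and reflexivity sections, at the cost of invoking $(\mathrm{i})$ twice rather than once. Both arguments correctly sidestep the symmetry issue flagged in Remark~\ref{remark about congs not being symmetric}.
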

\begin{proof} Suppose that the reflexivity of $R$ and $S$ are given by the factorisations
\[
	\xymatrix@R=10pt{ & X \\
		X \ar[ur]^-{1_X} \ar[dr]_-{1_X} \ar@{.>}[rr]|-{e_R} & & R \ar[ul]_-{r_1} \ar[dl]^-{r_2}\\
		& X} \;\;\;\;\begin{array}{c} \vspace{33pt}\\ \mathrm{and} \end{array}\;\;\;\;
		\xymatrix@R=10pt{ & X \\
		X \ar[ur]^-{1_X} \ar[dr]_-{1_X} \ar@{.>}[rr]|-{e_S} & & S. \ar[ul]_-{s_1} \ar[dl]^-{s_2}\\
		& S}
\]
(i) $\Rightarrow$ (ii) If $RS$ is a congruence, then it is transitive: $RSRS\subseteq RS$. We use Proposition~\ref{pps on w-c relations}(5), Lemma~\ref{basic lemma for f*}(3) and (4) and the fact that $(1_X)^*\cong I_X\cong (1_X)_*$ is the identity in $\Relw(\C)$ to get the following
\[
\begin{array}{rcl}
	RSRS \subseteq RS & \Leftrightarrow & (r_2)_*(r_1)^*SR(s_2)_*(s_1)^* \subseteq RS \\
	& \Leftrightarrow & (r_1)^*SR(s_2)_* \subseteq (r_2)^* RS (s_1)_* \\
	& \Rightarrow & (e_R)^*(r_1)^*SR(s_2)_*(e_S)_* \subseteq (e_R)^*(r_2)^* RS (s_1)_*(e_S)_* \\
	& \Leftrightarrow & (r_1e_R)^*SR(s_2 e_S)_* \subseteq (r_2 e_R)^* RS (s_1e_S)_* \\
	& \Leftrightarrow & SR\subseteq RS.
\end{array}
\]
Similarly, we can obtain $RS\subseteq SR$; thus $RS\cong SR$.\\
(ii) $\Rightarrow$ (iii) This implication is obvious. \\
(iii) $\Rightarrow$ (iv) We have $DD^*D\cong (d_2)_*(d_1)^*(d_1)_*(d_2)^*(d_2)_*(d_1)^*$, by the definition of $D^*$. Since $(d_1)^*(d_1)_*$ and $(d_2)^*(d_2)_*$ are effective congruences, their composition commutes. We get
$DD^*D$ $\cong$ $(d_2)_* (d_2)^*(d_2)_* (d_1)^*(d_1)_*(d_1)^*\cong (d_2)_*(d_1)^*\cong D$ (see Lemma~\ref{basic lemma for f*}(8)).\\
(iv) $\Rightarrow$ (v) If $R$ is a reflexive relation, then so is $R^*$; thus $I_X\subseteq R^*$, be Lemma~\ref{E reflexive <=> I_X subseteq E}. We have to prove that $R$ is transitive: $RR\cong R I_X R\subseteq RR^*R\cong R$.\\
(v) $\Rightarrow$ (i) Since $R$ and $S$ are reflexive, so is the composite $RS$. By assumption, $RS$ is a congruence.
\end{proof}

\section{Examples of \texorpdfstring{$\Ord$}{Ord}-Mal'tsev categories}\label{Examples}

\begin{example}\label{Ex:any Mal'tsev cat}
Any $\Ord$-enrichment of a Mal'tsev category is an $\Ord$-Mal'tsev category by Proposition~\ref{Mal'tsev => 2-Mal'tsev}. In particular, the varieties of (abelian) groups, rings, modules over a ring, Boolean algebras, Heyting algebras are such. As non-varietal examples, we have the dual of any elementary (pre)topos or the category of topological groups. These (and more) examples can be found in~\cite{CKP,CPP,BB}.
\end{example}

\begin{example}\label{Ex:Mon with cancellation}
Let $\Mon_{lc}$ denote the category of monoids with \emph{left cancellation}. We use additive notation to denote such monoids, even though they are not necessarily abelian. By left cancellation we mean: $a+b=a+c \Rightarrow b=c$, for any elements $a,b,c$. It is easy to check that $\Mon_{lc}$ is not a Mal'tsev category. For example, the ordinary relation $\le$ defined on $\N_0$ is not difunctional:
\[
	\begin{array}{ccc}
	 7 & \le & 8 \\
	 5 & \le & 8 \\
	 5 & \le & 6
	\end{array}
\]
although $7 \nleqslant 6$. However, $\Mon_{lc}$ admits an \Ord-enrichment that makes it an $\Ord$-Mal'tsev category. We shall consider on each $\Mon_{lc}(X,Y)$ the preorder defined by:
\begin{equation}\label{moi for monoids}
	f\moi g \Leftrightarrow \forall x\in X, \exists (!) y_x\in Y : f(x)+ y_x = g(x).
\end{equation}
Note that:
\begin{itemize}
	\item from the left cancellation property such $y_x$ is unique, for each $x\in X$;
	\item the zero morphism $0\colon X\to Y$ is such that $0\moi f$, for any $f\colon X\to Y$.
\end{itemize}
We denote this $\Ord$-category by $\MMon_{lc}$.

Let $\la d_1,d_2\ra \colon D\mono X\times Y$ be an ideal in $\MMon_{lc}$ and suppose we have morphisms $f,h,h'\colon A\to X$ and $g,g',k\colon A \to Y$ such that
\begin{equation}
\label{assumption on D}
	\begin{array}{ccc}
	 f & D & g \vspace{-10pt}\\
	   &   & \rotatebox{-90}{$\moi$} \\
	 h & D & g' \vspace{-10pt}\\
	\rotatebox{-90}{$\moi$} \\
	 h' & D & k \\
	\end{array}
\end{equation}
We want to prove that $f D k$. From $(f,g)\in_A D$ and $0\moi f$, then $(0,g)\in_A D$, since $D$ is an ideal. Since $\la d_1,d_2\ra$ is an ff-(mono)morphism, and given the factorisations
\begin{equation}
\label{<0,g> <= <f,g>}
\vcenter{\xymatrix@=50pt{ & X \\
		A \ar@/_/[ur]_-{f}  \ar@/^/[ur]^-0 \ar@{}[ur]|-{\rotatebox{-40}{$\moi$}} \ar[dr]_-{g} \ar@/^/@{.>}[rr]^-{\alpha} \ar@/_/@{.>}[rr]_-{\beta}
		\ar@{}[rr]|-{\rotatebox{-90}{$\moi$}} & & D \ar[ul]_-{d_1} \ar[dl]^-{d_2}\\
		& Y}}
\end{equation}
(where $\la d_1,d_2\ra \alpha=\la 0,g\ra$ and $\la d_1,d_2\ra \beta = \la f,g\ra$), we conclude that $\alpha \moi \beta$. By \eqref{moi for monoids} this means that, for any element $a$ of $A$, there exists an element $\delta_a$ in $D$ such that $\alpha(a) + \delta_a = \beta(a)$. Let $\delta_a=(\delta_a^1, \delta_a^2)$, for each $a$. We obtain
\[
\left\{\begin{array}{l}
	0 + \delta_a^1 = f(a) \\
	g(a) + \delta_a^2 = g(a).
\end{array}\right.
\]
It follows that $\delta_a^1=f(a)$ and $\delta_a^2=0$ (by left cancellation), for each $a$. Consequently, $(f(a), 0)\in D$, for any element $a$ in $A$. On the other hand, $(h',k)\in_A D$ and $0\moi h'$ gives $(0,k)\in_A D$, since $D$ is an ideal. Then, $(0,k(a))\in D$, for any element $a$ in $A$. Since $D$ is a submonoid of $X\times Y$, we conclude that $(f(a),0) + (0, k(a))=(f(a),k(a))\in D$, for any element $a$ in $A$. This proves that $f D k$, as desired.

We could also consider the preorder by $f\moi g \Leftrightarrow \forall x\in X, \exists (!) y_x\in Y : f(x)= g(x)+y_x.$ In that case $f\moi 0$ for any morphism $f$. Similar arguments show that the category of monoids with right cancellation is an $\Ord$-Mal'tsev category.
\end{example}

\begin{example}\label{Ex:gregarious monoids}
Let $\GMon$ denote the category of gregarious monoids. A monoid $(X,+,0)$ is called \emph{gregarious} when:
\[
	\forall x\in X, \exists u_x,v_x\in X: u_x+x+v_x=0.
\]
Again, we use additive notation although the monoid is not necessarily abelian. We show that $\GMon$ is not a Mal'tsev category. Consider a monoid $M$ generated by two elements $x$ and $y$, which satisfy $x+y=0$. It follows that $M=\{my+nx: m,n\in \N_0\}$. This gives an example of a gregarious monoid which is not a group. It is gregarious since
\[
	\forall\, my+nx\in M, \exists\, mx, ny\in M: mx + (my + nx) +ny=0.
\]
It is not a group since $y+x$, for instance, has no inverse (see~\cite[Example 1.9.4]{BB}). We give an example\footnote{This example is due to Andrea Montoli.} of an ordinary relation $D$ on $M$ which is not difunctional. Consider the submonoid
\[
	D=\{(my+nx,my+nx): m,n\in \N_0\} \cup \{(my+nx,(m+1)y+(n+1)x): m,n\in \N_0\}
\]
of $M\times M$. It is easy to check that it is a gregarious monoid, so that $D\mono M\times M$ is indeed an ordinary relation in $\GMon$. It is not difunctional:
\[
	\begin{array}{ccc}
	 x & D & 2x \\
	 2x & D & 2x \\
	 2x & D & 3x
	\end{array}
\]
although $x D \!\!\!\! / \;\; 3x$.

We consider the same $\Ord$-enrichment of Example~\ref{Ex:Mon with cancellation}; let $\GGMon$ denote this $\Ord$-category. We show that $\GGMon$ is an $\Ord$-Mal'tsev category next. Let $D\colon X\looparrowright Y$ be an ideal in $\mathbb{G}\mathsf{Mon}$ and suppose we have morphisms $f,h,h'\colon A\to X$ and $g,g',k\colon A \to Y$ such that
the relations in \eqref{assumption on D} hold. Since $(f,g)\in_A D$, then $(f(a),g(a))\in D$, for all elements $a$ in $A$. Being gregarious, there exist elements $u_a,v_a\in A$ such that $u_a+a+v_a=0$, for each $a$. As in Example~\ref{Ex:Mon with cancellation}, we also know that $(0,g)\in_A D$. So, each $(0,g(u_a)), (0,g(v_a))\in D$. We deduce that, for all $a\in A$
\[
	(0,g(u_a))+(f(a),g(a))+(0,g(v_a))=(f(a), g(u_a+a+v_a)) = (f(a),0)\in D.
\]
Using arguments similar to those of the final part of Example~\ref{Ex:Mon with cancellation} we conclude that $fDk$.
\end{example}

\begin{example}\label{OrdGrp}
Consider the category $\OrdGrp$ of preordered groups and monotone group homomorphisms. Recall that a preordered group is a (not necessarily abelian) group $(X,+,0)$ equipped with a preorder $\le$ such that the group operation is monotone
\[x\le y, u\le v\;\; \Rightarrow\;\; x+u \le y+v,\]
for any elements $x,y,u,v\in X$; their morphisms are the monotone group homomorphisms.
Note that the preorder of a group $(X,+,0)$ is completely determined by its \emph{positive cone}, which is the submonoid of $X$, closed under conjugation, given by its positive elements, $P_X=\{x\in X: 0\leqslant x\}$.

It was shown in~\cite{CM-FM} that $\OrdGrp$ is not a Mal'tsev category. However, we shall consider an $\Ord$-enrichment for $\OrdGrp$ that makes it an $\Ord$-Mal'tsev category. A similar study was done concerning the protomodularity and suitable $\Ord$-enriched version of protomodularity for ordered (abelian) groups -- see~\cite{CMR}.

In $\OrdGrp$ the pointwise preorder on morphisms trivialises; that is, if one defines, for morphisms $f,g\colon X\to Y$, $f\moi g$ if, for all $x\in X$, $f(x)\leq g(x)$, then also $f(-x)\leq g(-x)$, and consequently, $\moi$ is symmetric. That is, $f\moi g$ only if $f\sim g$. The proof that the $\Ord$-category given by this preorder is not an $\Ord$-Mal'tsev category uses arguments similar to those used to prove that $\OrdGrp$ is not a Mal'tsev category.

We consider now the pointwise order restricted to positive elements, and define, for morphisms $f,g\colon X \to Y$ of $\OrdGrp$,
\begin{equation}
\label{< for OrdAb}
	f\preccurlyeq g \;\;\Leftrightarrow\;\; \forall x\in P_X, f(x)\leq g(x).
\end{equation}
It is straightforward to check that (pre)composition preserves the preorder of $\OrdGrp(X,Y)$, for any preordered groups $X$ and $Y$, and so this defines an $\Ord$-category $\OOrdGrp$. As for the previous examples, we also have $0\moi f$, for any morphism $f$ in $\OOrdGrp$.

Any ideal $D\colon X\looparrowright Y$ in $\OOrdGrp$ is such that $D\cong X\times Y$, as a group. Indeed, consider the ordered group $A=(X\times Y,\{(0,0)\})$ with only one positive element $(0,0)$. The morphisms $0_X\colon X\times Y \to X$ and $0_Y\colon X\times Y \to Y$ are such that $(0_X,0_Y)\in_{A} D$. The product projections $\pi_X\colon X\times Y\to X$ and $\pi_Y\colon X\times Y\to Y$ are such that $\pi_X\moi 0_X$ and $0_Y\moi \pi_Y$. Since $D$ is an ideal, we conclude that $(\pi_X,\pi_Y)\in_{A} D$, i.e. $D\cong X\times Y$. Consequently, the relation $D$ in $\OOrdGrp$ is given by the identity group homomorphism on $X\times Y$, which is also a monotone map
\[
	\xymatrix{(D\cong X\times Y,P_D) \ar[r]^-{1_{X\times Y}} & (X\times Y, P_{X\times Y}=P_X\times P_Y).}
\]

Suppose we have morphisms $f,h,h'\colon (A,P_A) \to (X,P_X)$ and $g,g',k\colon (A,P_A) \to (Y,P_Y)$ such that
the relations in \eqref{assumption on D} hold. We want to prove that $fDk$. There is always a group homomorphism $\la f,k\ra\colon A \to D\cong X\times Y$. To have $fDk$, this group homomorphism must also be a monotone map, i.e. for any positive element $a\in P_A$, we must prove that $(f(a),k(a))\in P_D$.

From diagram \eqref{<0,g> <= <f,g>} of Example~\ref{Ex:Mon with cancellation}, we know that $\la 0,g\ra \moi \la f,g\ra$. This means that, for all positive elements $a\in P_A$, $(0,g(a))\le (f(a),g(a))$; it follows that $(f(a),0)\in P_D$, for each $a\in P_A$. We also know that $(0,k)\in_{(A,P_A)} D$, from which we conclude that $(0,k(a))\in P_D$ for each $a\in P_A$. Since $P_D$ is a submonoid of $D$, we get $(f(a),0)+(0,k(a))=(f(a),k(a))\in P_D$, for each $a\in P_A$.
\end{example}

Note that in the three previous examples one does not use all the assumptions of \eqref{assumption on D}. From the definition of the preorder $\moi$, we deduce a very strong property: $0\moi f$, for any $f\colon X\to Y$. This key property practically solves the issue on its own. 

\begin{remark} The $\Ord$-categories $\MMon_{lc}$, $\GGMon$ and $\OOrdGrp$ are not regular since they do not admit comma objects, thus (R1) fails. The fact that $\OOrdGrp$ does not admit comma objects was shown in~\cite{CMR}. Next we show that $\MMon_{lc}$ does not admit the comma object $1_{\N}/1_{\N}$. Let us suppose that it does exist and is given by the diagram
\[
\xymatrix@=30pt{C \ar[r]^-{\pi_2} \ar[d]_-{\pi_1} \ar@{}[dr]|-{\moi} & \N \ar[d]^-{1_{\N}} \\
								\N \ar[r]_-{1_{\N}} & \N.}
\]
Since $\pi_1\moi \pi_2$, it follows that $\pi_1(c)\le \pi_2(c)$, for all $c\in C$. Consider the morphisms $f,f'$, $g,g'\colon \N \to$ $\N$ defined by $f(n)=n$, $f'(n)=3n$, $g(n)=4n$ and $g'(n)=5n$. We have $f\moi g$, $f'\moi g'$, so that there exist induced morphisms $\lambda=\la f,g\ra\colon \N\to\N, \lambda'=\la f',g'\ra\colon \N\to \N$ from the universal property of the above comma object. From $f\moi f'$ and $g\moi g'$ we deduce that $\lambda\moi \lambda'$ (see Definition~\ref{comma obj def}(C3)). In particular, there exists $c\in C$ such that $\lambda(1)+c=\lambda'(1)$. It then follows that
\[
\left\{\begin{array}{l}
	f(1)+\pi_1(c)=f'(1) \\
	g(1)+\pi_2(c)=g'(1)
\end{array}\right.
\Rightarrow
\left\{\begin{array}{l}
	\pi_1(c)=2 \\
	\pi_2(c)=1.
\end{array}\right.
\]
However, $\pi_1(c)\nleqslant \pi_2(c)$, which contradicts the above assertion that $\pi_1\moi \pi_2$.

Next we show that $\GGMon$ does not admit comma objects. To do so we consider the gregarious monoids $M$ and $D$ from Example~\ref{Ex:gregarious monoids}. Let us suppose that the comma object $1_M/1_M$ exists and is given by the diagram
\[
\xymatrix@=30pt{C \ar[r]^-{\pi_2} \ar[d]_-{\pi_1} \ar@{}[dr]|-{\moi} & M \ar[d]^-{1_{M}} \\
								M \ar[r]_-{1_{M}} & M.}
\]
Consider the morphisms $d_1,d_2\colon D \to M$ defined by first and second projections. It is easy to prove that $d_1\moi d_2$. We have $0\moi d_1$, $d_1\moi d_2$, so that there exist induced morphisms $\lambda=\la 0,d_1\ra\colon \N\to\N, \lambda'=\la d_1,d_2\ra\colon D\to M$ from the universal property of the above comma object. From $0\moi d_1$ and $d_1\moi d_2$ we deduce that $\lambda\moi \lambda'$ (see Definition~\ref{comma obj def}(C3)). In particular, there exists $c\in C$ such that $\lambda(y,y)+c=\lambda'(y,y)$. It then follows that
\[
\left\{\begin{array}{l}
	0+\pi_1(c)=d_1(y,y) \\
	d_1(y,y)+\pi_2(c)=d_2(y,y)
\end{array}\right.
\Rightarrow
\left\{\begin{array}{l}
	\pi_1(c)=y \\
	y+\pi_2(c)=y.
\end{array}\right.
\Rightarrow
\left\{\begin{array}{l}
	\pi_1(c)=y \\
	\pi_2(c)=0
\end{array}\right.
\]
($y+\pi_2(c)=y\Rightarrow x+y+\pi_2(c)=x+y\Rightarrow \pi_2(c)=0$). Since $\pi_1\moi \pi_2$, there exists $z\in M$ such that $\pi_1(c)+z=\pi_2(c)$, i.e. $y+z=0$. Such an equality is impossible to hold due to the definition of $M$. Indeed, $y+z=0\Rightarrow x+y+z=x\Rightarrow z=x$. However, $y+x$ cannot be equal to 0 since it has no inverse, as observed in Example~\ref{Ex:gregarious monoids}.
\end{remark}

\begin{example}\label{internal preorders} Let $\Ccal$ be a regular category. Consider the category of (internal) ordinary preorders in $\Ccal$, which we denote by $\O(\Ccal)$. The objects are pairs $(X,R)$, where $R:X\relto X$ is an ordinary reflexive and transitive relation in $\Ccal$. A morphism $(f,\overline{f})\colon (X,R)\to (Y,S)$ is a pair of morphisms $f\colon X\to Y$ and $\overline{f}\colon R\to S$ of $\Ccal$ such that the following diagram commutes
\[
	\xymatrix{ R \ar[r]^-{\overline{f}} \ar[d]_-{\la r_1,r_2\ra} & S \ar[d]^-{\la s_1,s_2 \ra} \\ X\times X \ar[r]_-{f\times f} & Y\times Y.}
\]
It is easy to check that $(f,\overline{f})$ is a monomorphism in $\O(\Ccal)$ if and only if $f$ and, consequently, $\overline{f}$ are monomorphisms in $\Ccal$. Moreover, $(f,\overline{f})$ is an ff-monomorphism if and only if $f$ is a monomorphism and the commutative diagram above is a pullback in $\Ccal$, i.e. $R\cong f^{-1}(S)$.

We follow~\cite{Vassilis} and consider the preorder on the morphisms in $\O(\Ccal)$ defined as: given $(f,\overline{f})$, $(g,\overline{g})\colon (X,R)\to (Y,S)$, $(f,\overline{f})\moi (g,\overline{g})$ if and only if $(f,g)\in_X S$. We denote this $\Ord$-category by $\OO(\Ccal)$. Following similar arguments as those in~\cite{Vassilis}, we can prove that $\OO(\Ccal)$ satisfies conditions (R1), (R2) and (R3) of Definition~\ref{enriched regular cat}. Consequently, the calculus of relations developed in the previous sections holds for $\OO(\Ccal)$ (see Remark~\ref{remark on Ord-quasiregular}).

\begin{proposition} A regular category $\Ccal$ is a Mal'tsev category if and only if $\OO(\Ccal)$ is an $\Ord$-Mal'tsev category.
\end{proposition}
\begin{proof}
If $\Ccal$ is a regular Mal'tsev category, then any ordinary reflexive relation is necessarily symmetric by Theorem~\ref{from CKP}(vii). So every ordinary preorder $R\colon X\relto X$ in $\Ccal$ is necessarily an ordinary equivalence relation in $\Ccal$. We show that $\O(\Ccal)$ is a Mal'tsev category by using Definition~\ref{Mal'tsev cat with no lims}. Any ordinary relation $(X,R) \stackrel{(\delta_1,\overline{\delta_1})}{\longleftarrow} (Z,D) \stackrel{(\delta_2,\overline{\delta_2})}{\longrightarrow} (Y,S)$ in $\O(\Ccal)$ gives two ordinary relations $R\stackrel{\overline{\delta_1}}{\longleftarrow} D \stackrel{\overline{\delta_2}}{\longrightarrow} S$ and $X\stackrel{\delta_1}{\longleftarrow} Z \stackrel{\delta_2}{\longrightarrow} Y$ in $\Ccal$, which are both difunctional by assumption. It easily follows that  $(X,R) \stackrel{(\delta_1,\overline{\delta_1})}{\longleftarrow} (Z,D) \stackrel{(\delta_2,\overline{\delta_2})}{\longrightarrow} (Y,S)$ is difunctional in $\O(\Ccal)$. By Proposition~\ref{Mal'tsev => 2-Mal'tsev} any $\Ord$-enrichment of $\O(\Ccal)$ is an $\Ord$-Mal'tsev category, thus $\OO(\Ccal)$ is an $\Ord$-Mal'tsev category.

For the converse, consider an ordinary relation $X\stackrel{d_1}{\longleftarrow} D \stackrel{d_2}{\longrightarrow} Y$ in $\Ccal$. The following morphism  $(\la d_1,d_2\ra, \la d_1,d_2\ra)\colon (D,\Delta_D)\to (X\times Y,\Delta_{X\times Y})$
\[
	\xymatrix@C=50pt{D \pullback\ar[d]_-{\la 1_D,1_D\ra} \ar[r]^-{\la d_1,d_2\ra} & X\times Y \ar[d]^-{\la 1_{X\times Y}, 1_{X\times Y} \ra} \\ D\times D \ar[r]_-{\la d_1,d_2\ra\times \la d_1,d_2\ra} & X\times Y\times X\times Y}
\]
is an ff-monomorphism in $\OO(\Ccal)$ since $\la d_1,d_2\ra$ is a monomorphism in $\Ccal$ and the diagram above is a pullback in $\Ccal$. The relation $(X,\Delta_X) \stackrel{(d_1,d_1)}{\longleftarrow} (D,\Delta_D) \stackrel{(d_2,d_2)}{\longrightarrow} (Y,\Delta_Y)$ is an ideal in $\OO(\Ccal)$. Indeed, suppose that $(a,\overline{a}),(x,\overline{x})\colon (A,U)\to (X,\Delta_X)$ and $(y,\overline{y}),(b,\overline{b})\colon (A,U)\to (Y,\Delta_Y)$ are such that $((x,\overline{x}),(y,\overline{y}))\in_{(A,U)} (D,\Delta_D)$, $(a,\overline{a})\moi (x,\overline{x})$ and $(y,\overline{y})\moi (b,\overline{b})$. Then $(a,x)\in_A \Delta_X$ and $(y,b)\in_A \Delta_Y$, from which we conclude that $a=x$ and $b=y$; consequently, $(a,\overline{a})=(x,\overline{x})$ and $(b,\overline{b})=(y,\overline{y})$.

Suppose that $x,u\colon A\to X$ and $y,v\colon A\to Y$ are such that $(x,y)\in_A D$, $(u,y)\in_A D$ and $(u,v)\in_A D$ (see \eqref{picture of difunctional}). It follows that $((x,x),(y,y))\in_{(A,\Delta_A)} (D,\Delta_D)$, $((u,u),(y,y))\in_{(A,\Delta_A)} (D,\Delta_D)$ and $((u,u),(v,v))\in_{(A,\Delta_A)} (D,\Delta_D)$. By assumption the ideal $(D,\Delta_D)\colon (X,\Delta_X)\looparrowright (Y,\Delta_Y)$ is difunctional, so that $((x,x),(v,v))\in_{(A,\Delta_A)} (D,\Delta_D)$. In particular, we get $(x,v)\in_A D$.
\end{proof}

\end{example}

\begin{example}\label{ex:weakly}
We recall from \cite[Corollary 5.1]{JMF} (see also \cite{NMFweakly}) that a category with pullbacks and equalisers is \emph{weakly Mal'tsev} if and only if every strong ordinary relation is difunctional. By strong ordinary relation we mean an ordinary relation $X\stackrel{r_1}{\longleftarrow} R \stackrel{r_2}{\longrightarrow} Y$ such that $(r_1,r_2)$ is jointly strongly monomorphic. Hence a weakly Mal'tsev category with an $\Ord$-enrichment such that every ff-monomorphism is strong is automatically an $\Ord$-Mal'tsev category.
\end{example}

\begin{example}\label{ex:VCat}
As shown in \cite[Proposition 3]{NMFnew}, the category $(\VCat)^\op$, for a fixed unital and integral quantale $V=(V,\moi,\otimes,k)$, is weakly Mal'tsev. It is also a quasivariety (see \cite{Zhang, CFH}), hence in particular it is a regular category. Moreover, we have shown in \cite{ClementinoRodelo} that the full subcategory of symmetric $V_\wedge$-categories is a Mal'tsev category. The category $(\VCat)^\op$ has a natural $\Ord$-enrichment given, for every $V$-functor $f\colon X\to Y$, by $f\moi g$ if, for all $x\in X$, $Y(f(x),g(x))=k$. It is easy to check that ff-monomorphisms $f\colon X\to Y$ in $(\VCat)^\op$ are exactly surjective $V$-functors $Y\to X$, while a strong monomorphism is a final surjection, so that $X(x,x')=\bigvee\left\{Y(y,y'); y\in f^{-1}(x), y'\in f^{-1}(x')\right\}$. Therefore ff-monomorphisms do not need to be strong, and so one cannot conclude that $(\VCat)^\op$ is an $\Ord$-Mal'tsev category. Indeed, using the results of the Appendix we show next that a $V$-category is an $\Ord$-W-Mal'tsev object in $(\VCat)^\op$ if and only if it is a symmetric $V_\wedge$-category, showing this way that $(\VCat)^\op$ is not an $\Ord$-Mal'tsev category.

First of all we should note that $(\VCat)^\op$ is a regular $\Ord$-category. Here, in order to calculate $R_*$ for a given relation $R$, we need to build the cocomma object of the identities on an object $X$:
\[\xymatrix{&X\ar@{=}[ld]\ar@{=}[rd]\\
X\ar[rd]_{\iota_1}\ar@{}[rr]|{\moi}&&X\ar[ld]^{\iota_2}\\
&X\oplus X&}\]
It is straightforward to check that $X\oplus X$ has as underlying set $X+X=X\times\{1\}\cup X\times\{2\}$, where $X\oplus X((x,i),(x',j))$ is either $X(x,x')$ if $i\leq j$, or $\bot$ (if $i=2$ and $j=1$). Hence, given a relation $X\stackrel{r_1}{\longleftarrow} R \stackrel{r_2}{\longrightarrow} Z$, $R_*$ is defined by the following diagram
\[\xymatrix{&X\ar@{=}[ld]\ar@{=}[rd]&&&&Z\ar@{=}[ld]\ar@{=}[rd]\\
X\ar@/_3pc/[rrrddd]_{\iota_1}\ar[rd]_{\iota_1}\ar@{}[rr]|{\moi}&&X\ar[ld]^{\iota_2}\ar[rd]^{r_1}&&Z
\ar[ld]_{r_2}\ar[rd]_{\iota_1}\ar@{}[rr]|{\moi}&&Z\ar@/^3pc/[lllddd]^{\iota_3}\ar[ld]^{\iota_2}\\
&X\oplus X\ar@{}[rr]|{\fbox{1}}\ar[rd]_{1+r_1}&&R\ar@{}[rr]|{\fbox{2}}\ar[ld]^{\iota_2}\ar[rd]_{\iota_1}&&Z\oplus Z\ar[ld]^{r_2+1}\\
&&X\oplus R\ar@{}[rr]|{\fbox{3}}\ar[rd]_{\iota_{12}}&&R\oplus Z\ar[ld]^{\iota_{23}}\\
&&&X\oplus R\oplus Z}\]
where \fbox{1}, \fbox{2}, \fbox{3} are pushouts.
That is, the underlying set of $X\oplus R$ is $X+R$, with
\[(X\oplus R)(w,w')=\left\{\begin{array}{ll}
X(w,w')&\mbox{ if }w,w'\in X\\
R(w,w')&\mbox{ if }w,w'\in R\\
R(r_1(w),w')&\mbox{ if }w\in X,\,w'\in R\\
\bot&\mbox{ if }w\in R,w'\in X.
\end{array}\right.\]
Likewise for $R\oplus Z$ and $X\oplus R\oplus Z$. Now $R_*$ is obtained via the (epimorphism, strong mo\-no\-mor\-phism)-factorisation of the morphism $\left(\begin{smallmatrix} \iota_1 \\ \iota_3\end{smallmatrix}\right)\colon X+Z\to X\oplus R\oplus Z$. Therefore the underlying set of $R_*$ is $X+Z$, with
$R_*(w,w')=(X\oplus R\oplus Z)(w,w')$; that is,
\[R_*(w,w')=\left\{\begin{array}{ll}
X(w,w')&\mbox{ if }w,w'\in X\\
Z(w,w')&\mbox{ if }w,w'\in Z\\
R(r_1(w),r_2(w'))&\mbox{ if }w\in X,\, w'\in Z\\
\bot&\mbox{ if }w\in Z,\, w'\in X.
\end{array}\right.\]

\begin{proposition}
A $V$-category is an $\Ord$-W-Mal'tsev object in $(\VCat)^\op$ if and only if it is a symmetric $V_\wedge$-category.
\end{proposition}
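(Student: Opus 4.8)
The plan is to reduce the object-wise Mal'tsev condition to explicit combinatorics of $V$-distributors on $X$, and then verify each implication separately. First I would record, using the Appendix, that a $V$-category $X$ is an $\Ord$-W-Mal'tsev object in $(\VCat)^\op$ precisely when every strong reflexive ideal $R\colon X\looparrowright X$ is a congruence, equivalently (by the analogue of Theorem~\ref{Ord-enriched for CKP}(iv) and Definition~\ref{Ord-Mal'tsev}) when every strong ideal $D\colon X\looparrowright X$ satisfies $DD^*D\cong D$. The formula for $R_*$ computed above identifies an ideal $D\colon X\looparrowright X$ with a $V$-distributor $\phi$ on $X$, namely its cross block $D(w,w')$ for $w$ in the first and $w'$ in the second copy of $X$; reflexivity $I_X\subseteq D$ becomes the inequality $X(x,x')\moi\phi(x,x')$, and composition of ideals becomes the usual composition of distributors. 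Since strong monomorphisms in $(\VCat)^\op$ are the final surjections, strongness of $D$ pins $\phi$ down to the final (largest) structure compatible with its underlying data, and this is the constraint that singles out the admissible test ideals. Because the opposite ideal is $D^*=I_XD^\circ I_X$ and not the naive transpose (Corollary~\ref{R^* smallest ideal contains R^op}), I would compute $D^*$ in this model before forming the triple composite.

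For the ``if'' direction I would assume $X$ is a symmetric $V_\wedge$-category, so that $X(x,x)=k$, $X(x,y)=X(y,x)$ and $X(x,y)\wedge X(y,z)\moi X(x,z)$, and verify $DD^*D\subseteq D$ on generalised elements for every strong ideal $D$, the reverse inclusion being automatic. Symmetry of $X$ makes the transpose data of a strong ideal admissible, so that $D^*$ admits a manageable description through the $R_*$-formula, while the identities $X(x,x)=k$ and the idempotent $\wedge$-transitivity supply exactly the reflexivity and transitivity needed to collapse the threefold composite back to $D$; the decisive fact is that $\wedge$ is idempotent, $a\wedge a=a$, where $\otimes$ is not. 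This is the object-wise shadow of the result recalled from \cite{ClementinoRodelo} that symmetric $V_\wedge$-categories form a Mal'tsev category, and I expect the computation to parallel that one closely.

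For the ``only if'' direction I would feed suitably chosen strong reflexive ideals into the Mal'tsev condition and read off the two defining properties. The natural source is the family $R_*$ attached to relations $R$ built from the hom of $X$ and from comma-type data; tracing $DD^*D\cong D$ (or transitivity of the generated reflexive congruence) through the explicit $R_*$-formula should force on the cross block the idempotent inequality $X(x,y)\wedge X(y,z)\moi X(x,z)$, in place of the weaker $X(x,y)\otimes X(y,z)\moi X(x,z)$ available in any $V$-category, and should likewise force $X(x,y)\moi X(y,x)$ and hence, by symmetry of the argument, the equality $X(x,y)=X(y,x)$; that $X(x,x)=k$ follows from reflexivity together with integrality of $V$.

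The hard part will be the bookkeeping of the first and third paragraphs: describing the strong ideals in $(\VCat)^\op$ through the finality condition accurately enough that the test ideals of the forward direction are genuinely admissible and carry the final structure, and computing $D^*$ and the triple composite $DD^*D$ via the $R_*$-formula and distributor composition (a supremum of $\otimes$'s). The delicate point is precisely that strongness, i.e.\ finality, is what licenses idempotency to surface, making the forward extraction sharp enough to yield $\wedge$ rather than merely $\otimes$, while simultaneously confirming that nothing stronger than symmetric $V_\wedge$ is imposed.
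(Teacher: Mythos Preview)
Your plan rests on a misreading of the Appendix. Proposition~\ref{handy wrt Ord-enriched W-Mal'tsev objs} does \emph{not} say that $Y$ is an $\Ord$-W-Mal'tsev object when every (strong, reflexive) ideal on $Y$ is a congruence, nor that every ideal $D\colon Y\looparrowright Y$ satisfies $DD^*D\cong D$; that would be an object-wise version of Theorem~\ref{Ord-enriched for CKP}, which the paper does not prove and which is not what Definition~\ref{Ord-enriched W-Maltsev obj} says. The Appendix gives instead a \emph{single} concrete test: form the (so, ff)-factorisation of the canonical map $3Y\to 2Y\times 2Y$ in \eqref{iotas}, call the resulting relation $D$, and check whether $(\iota_1,\iota_1)\in_Y D_*$. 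The paper's proof follows exactly this route: it computes $D$ and then $D_*$ explicitly in $(\VCat)^\op$ via the cocomma construction $X\oplus R\oplus Z$ already described just before the Proposition, and shows that $(\iota_1,\iota_1)\in_Y D_*$ amounts to a map $h=\left(\begin{smallmatrix}\pi_1\\\pi_1\end{smallmatrix}\right)$ being a $V$-functor, which unwinds to the inequality $Y(y_1,y_2')\wedge Y(y_2,y_2')\wedge Y(y_2,y_1')\leq Y(y_1,y_1')$; specialising $y_1=y_2'$ yields the symmetric $V_\wedge$ condition, and conversely that condition implies the inequality.

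A second problem is the repeated appeal to ``strong'' ideals. Ideals in $\C$ are ff-subobjects (Definition~\ref{w-c relation}); the paper emphasises in Example~\ref{ex:VCat} that ff-monomorphisms in $(\VCat)^\op$ are merely surjective $V$-functors and need \emph{not} be strong (final surjections). Your proposal imports strongness as if it were part of the definition and then uses finality to ``license idempotency to surface''; neither the definition nor the Appendix criterion involves strongness, so this extra hypothesis is unjustified and the mechanism you describe for extracting $\wedge$ rather than $\otimes$ is not available. The $\wedge$ actually arises because the relation $D$ in \eqref{iotas} is obtained by a strong-epi/mono factorisation in $\VCat$ (dually an so/ff factorisation in $(\VCat)^\op$), so its $V$-category structure is the final one induced from $2Y\times 2Y$, which is a product; that is where the meets come from, not from any strongness assumption on arbitrary ideals.
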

\begin{proof}
To proof the claim we will make use of Proposition \ref{handy wrt Ord-enriched W-Mal'tsev objs}. Let $Y$ be a $V$-category and $D$ be the relation defined in \eqref{iotas}. Our aim is to check under which conditions the map $h\colon D_*\to Y$ making the following diagram commute, so that $h=\left(\begin{smallmatrix} \pi_1 \\ \pi_1\end{smallmatrix}\right)$, is a $V$-functor:
\[\xymatrix{Y\times Y\times Y\\
&D\ar[lu]_m&D_*\ar[l]_{e'}\ar[rd]^h\\
(Y\times Y)+(Y\times Y)\ar[uu]^{\left(\begin{smallmatrix} \pi_1 & \pi_2 & \pi_2 \\ \pi_2 & \pi_2 & \pi_1 \end{smallmatrix}\right)}\ar[ru]^e\ar[rru]_{\id}\ar[rrr]_{\left(\begin{smallmatrix} \pi_1 \\ \pi_1\end{smallmatrix}\right)}&&&Y}\]
Here $e'(y_1,y_2)=e(y_1,y_2)=(y_1,y_2,y_2)$ if $(y_1,y_2)$ belongs to the first summand, and \linebreak $e'(y_1,y_2)=e(y_1,y_2)=(y_2,y_2,y_1)$ if $(y_1,y_2)$ belongs to the second one. Then $h$ is a $V$-functor if and only if, for all $(y_1,y_2), (y_1',y_2')$ in $(Y\times Y)+(Y\times Y)$,
\begin{equation}\label{eq:Vfunctor}
D_*((y_1,y_2), (y_1',y_2'))\leq Y(y_1,y_1').
\end{equation}
When $(y_1,y_2)$ belongs to the first summand and $(y_1',y_2')$ belongs to the second one this means, since $D_*((y_1,y_2),(y_1',y_2'))=D((y_1,y_2,y_2),(y_2',y_2',y_1'))$,
\begin{equation}\label{eq:D*}
Y(y_1,y_2')\wedge Y(y_2,y_2')\wedge Y(y_2,y_1')\leq Y(y_1,y_1').
\end{equation}
Taking $y_1=y_2'$ this inequality translates to
\[Y(y_2,y_1)\wedge Y(y_2,y_1')\leq Y(y_1,y_1'),\]
which is equivalent to $Y$ being a symmetric $V_\wedge$-category (see \cite[Theorem 2.4]{ClementinoRodelo}).

Conversely, to show that $h$ is a $V$-functor provided that $Y$ is a symmetric $V_\wedge$-category, we note that the inequality \eqref{eq:Vfunctor} is trivially satisfied in all cases but when $(y_1,y_2)$ belongs to the first summand and $(y_1',y_2')$ belongs to the second one. In this case we have to show that \eqref{eq:D*} holds, for all $y_1,y_2,y_1',y_2'\in Y$: using first symmetry and then transitivity of the $V_\wedge$-category $Y$ we obtain:
\[Y(y_1,y_2')\wedge Y(y_2,y_2')\wedge Y(y_2,y_1')=Y(y_1,y_2')\wedge Y(y_2',y_2)\wedge Y(y_2,y_1')\leq Y(y_1,y_1').\]
\end{proof}
\end{example}

\appendix
\section{An object-wise approach to \texorpdfstring{$\Ord$}{Ord}-Mal'tsev categories}\label{An object-wise approach to Ord-enriched Mal'tsev categories}

The authors of~\cite{MRVdL} explored several algebraic categorical notions at an object-wise level. One of those was the notion of  Mal'tsev object. Their approach was inspired by the classification properties of the fibration of points studied in~\cite{MCFPO}. Independently, the author of~\cite{Weighill} used the characterisation of a Mal'tsev category obtained through the difunctionality of ordinary relations to introduce a definition of Mal'tsev object (recall Definition~\ref{Mal'tsev cat with no lims} and Theorem~\ref{from CKP}). A comparison between both notions may be found in~\cite{ClementinoRodelo}, where a Mal'tsev object in the sense of~\cite{Weighill} was called a ``W-Mal'tsev object''; we keep that designation in this work.

\begin{definition}[\cite{Weighill}]\label{W-Maltsev obj}
An object $Y$ of a category $\Ccal$ is called a \defn{W-Mal'tsev object} when for every ordinary relation $X\stackrel{r_1}{\longleftarrow} R \stackrel{r_2}{\longrightarrow} Z$ in $\Ccal$, the $\Set$-relation
\[
	\xymatrix@C=40pt{\Ccal(Y,X) & \ar[l]_-{\Ccal(Y,r_1)} \Ccal(Y,R) \ar[r]^-{\Ccal(Y,r_2)} & \Ccal(Y,Z)}
\]
is difunctional.
\end{definition}

It follows from Definitions~\ref{Mal'tsev cat with no lims} and~\ref{W-Maltsev obj} that a category $\Ccal$ is a Mal'tsev category if and only if all of its objects are W-Mal'tsev objects.

This definition does not impose any kind of assumption on the base category $\Ccal$. However, if $\Ccal$ is regular and admits binary coproducts, the definition of a W-Mal'tsev object becomes easier to handle. Indeed, it allows the replacement of a property on all ordinary relations by a property on a specific ordinary relation defined on coproducts. As usual, for an object $Y$, we write $Y+Y=2Y$, $Y+Y+Y=3Y$, and $\iota_j\colon Y\to kY$ for the $j$-th coproduct coprojection.

\begin{proposition}[\cite{Weighill}]\label{handy wrt W-Mal'tsev objs} Let $\Ccal$ be a regular category with binary coproducts. An object $Y$ is a W-Mal'tsev object in $\Ccal$ if and only if,
	given the (regular epimorphism, monomorphism) factorisation in $\Ccal$
\[
\vcenter{\xymatrix{ 3Y \ar[dr]^e \ar[dd]_-{\left(\begin{smallmatrix} \iota_1 & \iota_2 \\ \iota_2 & \iota_2 \\ \iota_2 & \iota_1 \end{smallmatrix}\right)} \\ & D \ar[dl]^-{\la d_1,d_2\ra} \\ 2Y\times 2Y}}
\]
(which guarantees that $(\iota_1,\iota_2)\in_Y D$, $(\iota_2,\iota_2)\in_Y D$, $(\iota_2,\iota_1)\in_Y D$), we have $(\iota_1,\iota_1)\in_Y D$.
\end{proposition}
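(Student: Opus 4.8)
The plan is to prove the two implications separately, the forward one being immediate and the reverse one carrying all the content.

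For the forward implication I would simply instantiate the defining property of a W-Mal'tsev object at the single relation $R=D$ itself. Since $\langle d_1,d_2\rangle\colon D\to 2Y\times 2Y$ is a relation from $2Y$ to $2Y$ in $\Ccal$, the $\Set$-relation $\Ccal(Y,D)$ is difunctional. The three generalised elements $(\iota_1,\iota_2),(\iota_2,\iota_2),(\iota_2,\iota_1)\in_Y D$ read off from the factorisation of the defining matrix are exactly the top three rows of a difunctionality square, with $x=\iota_1$, $y=\iota_2$, $u=\iota_2$, $v=\iota_1$ viewed as elements of $\Ccal(Y,2Y)$, so difunctionality of $\Ccal(Y,D)$ yields $(\iota_1,\iota_1)\in_Y D$.

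For the reverse implication I would show that the single instance $(\iota_1,\iota_1)\in_Y D$ forces difunctionality of $\Ccal(Y,R)$ for an \emph{arbitrary} relation $X\stackrel{r_1}{\longleftarrow} R\stackrel{r_2}{\longrightarrow} Z$. Starting from witnesses $\alpha,\beta,\gamma\colon Y\to R$ for $(x,y),(u,y),(u,v)\in_Y R$, I would first use the universal property of the coproduct to assemble them into the copairing $c=[\alpha,\beta,\gamma]\colon 3Y\to R$, and introduce the copairings $p=[x,u]\colon 2Y\to X$ and $q=[v,y]\colon 2Y\to Z$. A routine check on coprojections then gives the key identity $\langle r_1,r_2\rangle c=(p\times q)\langle d_1,d_2\rangle e$, equivalently $r_1 c=p\,(d_1 e)$ and $r_2 c=q\,(d_2 e)$, where $d_1 e=[\iota_1,\iota_2,\iota_2]$ and $d_2 e=[\iota_2,\iota_2,\iota_1]$ are the two components of the defining matrix. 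This records precisely the sense in which $D$ is the \emph{generic} difunctionality datum at $Y$ and $(p,q)$ transports it to $R$. Feeding in the hypothesis, the element $(\iota_1,\iota_1)\in_Y D$ supplies $\eta\colon Y\to D$ with $d_1\eta=\iota_1=d_2\eta$; to be able to postcompose with $c$ I would pull the regular epimorphism $e\colon 3Y\to D$ back along $\eta$, obtaining projections $b\colon P\to Y$ and $t\colon P\to 3Y$ with $et=\eta b$, where $b$ is again a regular epimorphism since $\Ccal$ is regular. Composing, $ct\colon P\to R$ satisfies $r_1(ct)=p\,d_1\eta\,b=p\iota_1 b=xb$ and $r_2(ct)=q\,d_2\eta\,b=q\iota_1 b=vb$, so $(xb,vb)\in_P R$.

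The main obstacle, and the only place regularity beyond the construction of $c$ is used, is the final descent from $(xb,vb)\in_P R$ back to $(x,v)\in_Y R$. Here $\langle x,v\rangle b=\langle r_1,r_2\rangle(ct)$ factors through the monomorphism $\langle r_1,r_2\rangle\colon R\to X\times Z$ via $ct$, and since the regular epimorphism $b$ is a strong epimorphism, its orthogonality to $\langle r_1,r_2\rangle$ yields a diagonal $\delta\colon Y\to R$ with $r_1\delta=x$ and $r_2\delta=v$, i.e. $(x,v)\in_Y R$, as required. The one subtlety demanding care is the bookkeeping of coprojection indices, so that the copairings come out as $p=[x,u]$ and $q=[v,y]$ rather than $[y,v]$; an index slip there breaks the key identity and hence the whole transport.
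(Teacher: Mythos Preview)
Your proof is correct. The paper itself does not prove this proposition (it is quoted from \cite{Weighill}), but it does prove the $\Ord$-enriched analogue, Proposition~\ref{handy wrt Ord-enriched W-Mal'tsev objs}, and your argument is the natural $1$-dimensional version of that proof with one technical variation in the reverse implication.

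Both arguments transport the ``generic'' difunctionality datum $D$ to an arbitrary relation $R$ via the copairings $p=[x,u]$ and $q=[v,y]$. The paper first forms the $2$-pullback $S$ of $R$ along $p\times q$, uses orthogonality of the factorisation $3Y\twoheadrightarrow D\rightarrowtail 2Y\times 2Y$ against the monomorphism $S\rightarrowtail 2Y\times 2Y$ to obtain $D\subseteq S$, and then simply composes $Y\to D\hookrightarrow S\to R$ to get the witness for $(x,v)\in_Y R$ directly. You instead keep the identity $\langle r_1,r_2\rangle c=(p\times q)\langle d_1,d_2\rangle e$ at the level of $3Y$, pull back the regular epimorphism $e$ along $\eta$, and defer the orthogonality argument to the final descent from $(xb,vb)\in_P R$ to $(x,v)\in_Y R$. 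The two routes use the same ingredients (regularity for pullback-stability of regular epimorphisms and the strong-epi/mono orthogonality) in a different order; the paper's route buys a slightly cleaner endgame, while yours avoids introducing the auxiliary object $S$.
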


We present the $\Ord$-enriched versions of this approach next.

\begin{definition}\label{Ord-enriched W-Maltsev obj}
An object $Y$ of an $\Ord$-category $\C$ is called an \defn{$\Ord$-W-Mal'tsev object} when every ideal $R\colon X\looparrowright Z$ satisfies
\[
	\begin{array}{ccc}
	 x & R & z \vspace{-10pt}\\
	   &   & \rotatebox{-90}{$\moi$} \\
	 u & R & z' \vspace{-10pt}\\
	\rotatebox{-90}{$\moi$} \\
	 u' & R & v \\
	\hline
	x & R & v
	\end{array}
\]
for any generalised elements $x, u, u'\colon Y\to X$, $z,z',v\colon Y\to Z$.
\end{definition}

It follows from Definitions~\ref{Ord-Mal'tsev} and~\ref{Ord-enriched W-Maltsev obj} that \emph{an $\Ord$-category $\C$ is an $\Ord$-Mal'tsev category if and only if all of its objects are $\Ord$-W-Mal'tsev objects}.

\begin{proposition}\label{handy wrt Ord-enriched W-Mal'tsev objs} Let $\C$ be a regular $\Ord$-category with binary coproducts. An object $Y$ is an $\Ord$-W-Mal'tsev object in $\C$ if and only if,
	given the (so-morphism, ff-monomorphism) factorisation in $\C$
\begin{equation}\label{iotas}
\vcenter{\xymatrix{ 3Y \ar@{>>}[dr]^e \ar[dd]_-{\left(\begin{smallmatrix} \iota_1 & \iota_2 \\ \iota_2 & \iota_2 \\ \iota_2 & \iota_1 \end{smallmatrix}\right)} \\ & D \ar@{ >->}[dl]^-{\la d_1,d_2\ra} \\ 2Y\times 2Y}}
\end{equation}
(which guarantees that $(\iota_1,\iota_2)\in_Y D$, $(\iota_2,\iota_2)\in_Y D$, $(\iota_2,\iota_1)\in_Y D$), we have $(\iota_1,\iota_1)\in_Y D_*$.
\end{proposition}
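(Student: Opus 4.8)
The plan is to establish the two implications separately, with the ideal closure $D_*$ (Proposition~\ref{R_* smallest ideal contains R}) serving as the bridge between the universal situation on $2Y$ and an arbitrary ideal. The forward implication will be an immediate application of Definition~\ref{Ord-enriched W-Maltsev obj} to the ideal $D_*$ itself; the substance lies in the reverse implication, where the single membership $(\iota_1,\iota_1)\in_Y D_*$ must be transported into every ideal $R$.

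For the forward direction I would assume $Y$ is an $\Ord$-W-Mal'tsev object and feed the ideal $D_*\colon 2Y\looparrowright 2Y$ into the defining pattern of Definition~\ref{Ord-enriched W-Maltsev obj}. Since $D\subseteq D_*$, the three memberships recorded under \eqref{iotas} give $(\iota_1,\iota_2),(\iota_2,\iota_2),(\iota_2,\iota_1)\in_Y D_*$. Instantiating the pattern with $x=\iota_1$, $z=z'=\iota_2$, $u=u'=\iota_2$, $v=\iota_1$ makes the two order steps $z\moi z'$ and $u\moi u'$ into mere reflexivities, and the conclusion reads exactly $(\iota_1,\iota_1)\in_Y D_*$.

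For the reverse direction I would fix an ideal $R\colon X\looparrowright Z$ and generalised elements $x,u,u'\colon Y\to X$, $z,z',v\colon Y\to Z$ satisfying the five hypotheses, and first absorb the order steps into $R$: as $R$ is an ideal, $(x,z)\in_Y R$ with $z\moi z'$ yields $(x,z')\in_Y R$, and $u\moi u'$ with $(u',v)\in_Y R$ yields $(u,v)\in_Y R$, leaving the plain data $(x,z'),(u,z'),(u,v)\in_Y R$. I would then define $\phi\colon 2Y\to X$ by $\phi\iota_1=x$, $\phi\iota_2=u$ and $\psi\colon 2Y\to Z$ by $\psi\iota_1=v$, $\psi\iota_2=z'$; evaluating $\phi\times\psi$ on the three generators $(\iota_1,\iota_2),(\iota_2,\iota_2),(\iota_2,\iota_1)$ of $D$ returns precisely the pairs $(x,z'),(u,z'),(u,v)$, all in $R$. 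Hence $\la\phi d_1,\psi d_2\ra\colon D\to X\times Z$ factors through $R\mono X\times Z$ by orthogonality of the so-morphism $e$ against $\la r_1,r_2\ra$, so the direct image $R''\mono X\times Z$ of $D$ along $\phi\times\psi$ satisfies $R''\subseteq R$. On the calculus side, Lemma~\ref{basic lemma for f*}(3),(4) give $\psi_*D_*\phi^*\cong(\psi d_2)_*(\phi d_1)^*$, and factoring $\la\phi d_1,\psi d_2\ra$ as $D\repi R''\mono X\times Z$ together with Lemma~\ref{lemma on g/f, ff-ms and so-ms}(4) collapses this to $R''_*$. From $(\iota_1,\iota_1)\in_Y D_*$ and the memberships $(x,\iota_1)\in_Y\phi^*$, $(\iota_1,v)\in_Y\psi_*$ — both instances of reflexivity via \eqref{in f/g}, using $\phi\iota_1=x$ and $\psi\iota_1=v$ — I obtain $(x,v)\in_Y\psi_*D_*\phi^*\cong R''_*$. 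Finally $R''\subseteq R$ gives $R''_*\subseteq R_*\cong R$ by Proposition~\ref{R<=S implies R*<=S*} and Proposition~\ref{R=(r_2)_*(r_1)^*} (the latter since $R$ is an ideal), so $(x,v)\in_Y R$, i.e.\ $xRv$.

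The delicate point, and the one I expect to require the most care, is the chain $\psi_*D_*\phi^*\cong R''_*\subseteq R$ in the reverse direction: the direct image $R''$ of the universal relation $D$ need not itself be an ideal (Example~\ref{example of direct image of ideal which is not an ideal}), so one is forced to pass to its ideal closure, and it is precisely the ideal structure of $R$ that both licenses the initial absorption of the order steps and supplies the identity $R_*\cong R$ that keeps the closure trapped inside $R$. By comparison the forward direction is routine once one realises that the condition must be tested on $D_*$ rather than on $D$.
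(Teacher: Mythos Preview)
Your argument is correct, and the forward direction is identical to the paper's. In the reverse direction you take a genuinely different route: the paper \emph{pulls back} $R$ along $\phi\times\psi$ to obtain a relation $S$ on $2Y$, observes that $S$ is an ideal (Proposition~\ref{2-pb of ideal is an ideal}), so that $D\subseteq S$ forces $D_*\subseteq S$ by minimality, and then the witness $\tau\colon Y\to D_*$ for $(\iota_1,\iota_1)$ composes with $S\to R$ to exhibit $(x,v)\in_Y R$ directly. You instead \emph{push forward} $D$ along $\phi\times\psi$ to get $R''\subseteq R$, then use the relational calculus (Lemma~\ref{basic lemma for f*}(3),(4), Lemma~\ref{lemma on g/f, ff-ms and so-ms}(4)) to identify $\psi_*D_*\phi^*\cong R''_*$ and conclude via $R''_*\subseteq R_*\cong R$. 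The paper's approach is slightly more elementary---it never needs to invoke the $(\;)_*$ calculus in the reverse direction, only the minimality of $D_*$ and stability of ideals under 2-pullback---while yours is more computational and, as you note, must navigate the fact that direct images need not be ideals (Example~\ref{example of direct image of ideal which is not an ideal}), which is precisely why the passage through $R''_*$ is required. Both are clean; the pullback route avoids that subtlety altogether.
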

\begin{proof} Suppose that $Y$ is an $\Ord$-W-Mal'tsev object. Diagram \eqref{iotas} tells us that $\iota_1 D \iota_2$, $\iota_2 D \iota_2$ and $\iota_2 D \iota_1$. We also know that $D\subseteq D_*$ by Lemma~\ref{R^* smallest ideal contains R^op}. We get
\[
\begin{array}{ccc}
	 \iota_1 & D_* & \iota_2 \vspace{-10pt}\\
	   &   & \rotatebox{-90}{$\moi$} \\
	 \iota_2 & D_* & \iota_2 \vspace{-10pt}\\
	\rotatebox{-90}{$\moi$} \\
	 \iota_2 & D_* & \iota_1 \\
	\hline
	\iota_1 & D_* & \iota_1
	\end{array}
\]
For the converse, consider an ideal $R\colon X\looparrowright Z$ and the relations as in Definition~\ref{Ord-enriched W-Maltsev obj}. Since $R$ is an ideal, we get $(x,z')\in_Y R$, and $(u,v)\in_Y R$; we have induced morphisms
\[
	\xymatrix@R=10pt{ & X \\
		Y \ar[ur]^-{x} \ar[dr]_-{z'} \ar@{.>}[rr]^-{\exists \alpha} & & R, \ar[ul]_-{r_1} \ar[dl]^-{r_2}\\
		& Z}\;\;\;\;
	\xymatrix@R=10pt{ & X \\
		Y \ar[ur]^-{u} \ar[dr]_-{z'} \ar@{.>}[rr]^-{\exists \beta} & & R, \ar[ul]_-{r_1} \ar[dl]^-{r_2}\\
		& Z}\;\;\;\;
	\xymatrix@R=10pt{ & X \\
		Y \ar[ur]^-{u} \ar[dr]_-{v} \ar@{.>}[rr]^-{\exists \gamma} & & R \ar[ul]_-{r_1} \ar[dl]^-{r_2}\\
		& Z}
\]

Now, consider the 2-pullback and the induced morphism in
\[
	\xymatrix@C=40pt{3Y \ar@/^15pt/[drr]^-{\left(\begin{smallmatrix} \alpha \\ \beta \\ \gamma\end{smallmatrix}\right)} \ar@/_15pt/[ddr]_-{\left(\begin{smallmatrix} \iota_1 & \iota_2 \\ \iota_2 & \iota_2 \\ \iota_2 & \iota_1 \end{smallmatrix}\right)} \ar@{.>}[dr]^-{\sigma} \\
	 & S \pullback \ar[r]^-{\zeta} \ar@{ >->}[d]_-{\la s_1,s_2\ra} & R \ar@{ >->}[d]^-{\la r_1,r_2\ra} \\
	& 2Y\times 2Y \ar[r]_-{\left(\begin{smallmatrix} x\\u\end{smallmatrix}\right)\times \left(\begin{smallmatrix} v\\z'\end{smallmatrix}\right)} & X\times Z.}
\]
From the (so-morphism,ff-monomorphism) factorisation \eqref{iotas}, it follows that $D\subseteq S$. We get $D_*\subseteq S$; let $i$ be the inclusion morphism $i\colon D_* \to S$. By assumption, we have $(\iota_1,\iota_1)\in_Y D_*$, meaning that there exists a factorisation
\[
\xymatrix@R=10pt{ & 2Y \\
		Y \ar[ur]^-{\iota_1} \ar[dr]_-{\iota_1} \ar@{.>}[rr]^-{\exists \tau} & & D_*. \ar[ul] \ar[dl]\\
		& 2Y}
\]
Finally, we get
\[
\xymatrix@R=10pt{ & X \\
		Y \ar[ur]^-{x} \ar[dr]_-{v} \ar@{.>}[rr]^-{\zeta i \tau} & & R, \ar[ul]_-{r_1} \ar[dl]^-{r_2}\\
		& Z}
\]
which proves that $(x,v)\in_Y R$.
\end{proof}

\section*{Acknowledgements}
The authors are grateful to the anonymous referee, who gave suggestions
which helped improving the paper, and to Fernando Lucatelli Nunes for
fruitful discussions on the notion of Ord-regular category.


\begin{thebibliography}{10}
\bibitem{Barr} M.~Barr, P.A.~Grillet, D.H.~van~Osdol, \emph{Exact categories and categories of sheaves}, Lecture Notes in Math. 236, Springer-Verlag (1971).
\bibitem{BloomWright} S.L. Bloom and J.B. Wright, \emph{P-varieties -- A signature independent characterization of varieties of ordered algebras}, J.~Pure Appl. Algebra \textbf{29} (1983) 13--58.
\bibitem{BB} F.~Borceux and D.~Bourn, \emph{Mal'cev, protomodular, homological and semi-abelian categories}, Math. Appl., vol. 566, Kluwer Acad. Publ., 2004.
\bibitem{Borceux} F. Borceux, \emph{Handbook of Categorical Algebra}, Cambridge Univ. Press \textbf{vol} 2 (1994).
\bibitem{BourkeGarner} J. Bourke and R. Garner, \emph{Two-dimensional regularity and exactness}, J.~Pure Appl. Algebra \textbf{218} (2014) 1346--1371.
\bibitem{MCFPO} D.~Bourn, \emph{Mal'cev categories and fibration of pointed objects}, Appl. Categ. Structures \textbf{4} (1996) 307--327.
\bibitem{Buchsbaum} D.A. Buchsbaum, \emph{Exact categories and duality}, Trans. Amer. Math. Soc. \textbf{80} (1955) 1--34.
\bibitem{CKP} A.~Carboni, G.~M. Kelly, M.~C. Pedicchio, \emph{Some remarks on {M}altsev and {G}oursat categories}, Appl. Categ. Struct. 1\textbf{4} (1993) 385--421.
\bibitem{CLP} A.~Carboni, J.~Lambek, M.~C. Pedicchio, \emph{Diagram chasing in {M}al'cev categories}, J.~Pure Appl. Algebra \textbf{69} (1991) 271--284.
\bibitem{CPP} A.~Carboni, M.~C. Pedicchio,  N.~Pirovano, \emph{Internal graphs and internal groupoids in {M}al'cev categories}, Am. Math. Soc. for the Canad. Math. Soc., Providence, 1992, pp.~97--109.
\bibitem{CarboniStreet} A. Carboni, R. Street, \emph{Order ideals in categories}, Pacific Journal of Mathematics \textbf{124} No. 2 (1986) 275--288.
\bibitem{C21} M.~M.~Clementino, \emph{An invitation to topological semi-abelian algebras}. In: New perspectives in Algebra, Topology and Categories, Coimbra Mathematical Texts, Vol. 1, Springer Nature and University of Coimbra (2021) 27--66.
\bibitem{CFH} M.~M. Clementino, C. Fitas, D. Hofmann, \emph{A variety of co-quasivarieties}, Top. Appl., to appear. arXiv 2404.05017.
\bibitem{CM-FM} M.~M.~Clementino, N. Martins-Ferreira, A. Montoli, \emph{On the categorical behaviour of preordered groups}, {J.~Pure} Appl. Algebra \textbf{223} (2019) 4226--4245.
\bibitem{CMR}  M.~M.~Clementino, A. Montoli, D. Rodelo, \emph{On lax protomodularity for $\mathrm{Ord}$-enriched categories}, {J.~Pure} Appl. Algebra 227(8) (2023) 107348.
\bibitem{ClementinoRodelo}  M.M. Clementino, D. Rodelo, \emph{A note on Mal'tsev objects}, Portugaliae Mathematica, to appear. arXiv 2311.17628.
\bibitem{FreydScendrov1990} P.~Freyd and A.~Scedrov. \emph{Categories, Allegories}, Elsevier, 1990.
\bibitem{HM} J.~Hagemann, A.~Mitschke, \emph{On $n$-permutable congruences}, Algebra Universalis \textbf{3} (1973) 8--12.
\bibitem{JMF} Z.~Janelidze, N.~Martins-Ferreira, \emph{Weakly Mal'tsev categories and strong relations}, Theory Appl. Categ. \textbf{27} No.5 (2012) 65--79.
\bibitem{Kurz2023} A. Kurz, A. Moshier, A. Jung, \emph{Stone Duality for Relations}, in Samson Abramsky on Logic and Structure in Computer Science and Beyond, Springer-Verlag (2023) 159--215.
\bibitem{KV} A. Kurz, J. Velebil, \emph{Quasivarieties and varieties of ordered algebras: regularity and exactness}, Math. Struct. Comput. Sci. \textbf{27} (2017) 1153--1194.
\bibitem{Lambek} J. Lambek, \emph{Goursat's theorem and the Zassenhaus lemma}, Canad. J. Math. \textbf{10} (1957) 45--56.
\bibitem{Maltsev-Sbornik} A.I.~Mal'tsev, \emph{On the general theory of algebraic systems}, Mat. Sbornik N. S. \textbf{35}(77) (1954) 3--20.
\bibitem{NMFweakly} N. Martins-Ferreira, \emph{Weakly Mal'tsev categories}, Theory Appl. Categ. \textbf{21}(6) (2008) 91--117.
\bibitem{NMFnew} N. Martins-Ferreira, \emph{New wide classes of weakly Mal'tsev categories}, Appl. Categ. Structures \textbf{23} (2015) 741--751.
\bibitem{M-FRVdL} N. Martins-Ferreira, D. Rodelo, T. Van der Linden, \emph{An observation on n-permutability}, Bull. Belg. Math. Soc. Simon Stevin \textbf{21} No. 2 (2014) 223--230.
\bibitem{MRVdL} A.~Montoli, D.~Rodelo, T.~Van der Linden, \emph{Two characterisations of groups amongst monoids}, J.~Pure Appl. Algebra \textbf{222} (2018) 747--777.
\bibitem{Zhang} Q. Pu and D. Zhang, \emph{Categories enriched over a quantaloid: Algebras}, Theory Appl. Categ. \textbf{30}No. 21 (2015) 751--774.
\bibitem{Riguet} J. Riguet, \emph{Relations binaires, fermetures, correspondances de Galois}, Bull. Soc. Math. France \textbf{76} (1948) 114-155.
\bibitem{Smith} J.~D.~H. Smith, \emph{{M}al'cev varieties}, Lecture Notes in Math., vol. 554,  Springer, 1976.
\bibitem{Vassilis} V. Aravantinos-Sotiropoulos, \emph{The exact completion for regular categories enriched in posets}, {J.~Pure} Appl. Algebra 226(7) (2022) 106885.
\bibitem{Weighill} T.~Weighill, \emph{Mal'tsev objects, $R_1$-spaces and ultrametric spaces}, Theory Appl. Categ. \textbf{32} (2017), no.~42, 1485--1500.
\end{thebibliography}
\end{document}